\theoremstyle{plain}
\newtheorem{theoreme}{Théorème}[section]
\newtheorem{theorem}[theoreme]{Theorem}
\newtheorem*{theoremeA}{Theorem A}
\newtheorem*{theoremeB}{Theorem B}
\newtheorem*{theoremeB'}{Théorème B'}
\newtheorem{proposition}[theoreme]{Proposition}
\newtheorem{question}{Question}
\newtheorem{lemme}[theoreme]{Lemme}
\newtheorem{lemma}[theoreme]{Lemma}
\newtheorem{corollary}[theoreme]{Corollary}
\theoremstyle{definition}
\newtheorem{definition}[theoreme]{Definition}
\newtheorem{claim}[theoreme]{Claim}
\theoremstyle{remark}
\newtheorem{remarque}[theoreme]{Remarque}
\newtheorem{remark}[theoreme]{Remark}
\def\N{{\mathbb N}}
\def\Q{{\mathbb Q}}
\def\R{{\mathbb R}}
\def\Z{{\mathbb Z}}
\def\T{{\mathbb T}}
\def\eps{\varepsilon}
\def\f{\varphi}
\def\CC{{\mathcal C}}
\def\Diff{{\mathrm {Diff}}}
\def\DD{{\mathrm {Diff}}^\infty_+[0,1]}
\def\DN{{\mathrm {Diff}}^\infty_+N}
\def\DS{{\mathrm {Diff}}^\infty_+\T^1}
\def\DT{{\mathrm {D}}^\infty_+\T^1}
\def\DTo{{\mathrm {D}}^0_+\T^1}
\def\Ds{{\mathrm {Diff}}^\star[0,1]}
\def\Dsp{{\mathrm {Diff}}^\star_\partial[0,1]}
\def\DJ{{\mathrm {Diff}}^\infty_+J}
\def\DJs{{\mathrm {Diff}}^\star J}
\def\DJsp{{\mathrm {Diff}}^\star_\partial J}
\def\Cinf{{\CC^{\infty}}}
\def\RR{{\mathcal R}}
\def\RC{{\mathcal R^{pc}}}
\def\RRd{{\mathcal R^{[0,1]}_2}}
\def\RRn{{\mathcal R^{[0,1]}_n}}
\def\RRns{{\mathcal R^{\star}_{n,\partial}}}
\def\RNn{{\mathcal R^{N}_n}}
\def\RSd{{\mathcal R^{\T^1}_2}}
\def\RSdnl{{\mathcal R^{\T^1}_{2,\mathrm{nf}}}}
\def\RSn{{\mathcal R^{\T^1}_n}}
\def\RSnnl{{\mathcal R^{\T^1}_{n,\mathrm{nf}}}}
\def\RJ{{\mathcal R^{J}}}
\def\Rv{{\mathcal R^{\star}_{\varnothing}}}
\def\RCv{{\mathcal R^c_{\varnothing}}}
\def\RJs{{\mathcal R^{J,\star}}}
\def\RJsp{{\mathcal R^{J,\star}_{\partial}}}
\def\RIbs{{\mathcal R^{\bar{I},\star}_{\partial}}}
\def\Rss{{\mathcal R^{\star}}}
\def\Rs{{\mathcal R^{\star}_\partial}}
\def\RCs{{\mathcal R^c}}
\def\RCp{{\mathcal R^c_{\partial}}}
\def\RCJsp{{\mathcal R^{J,c}_{\partial}}}
\def\RCJs{{\mathcal R^{J,c}}}
\def\RCIbs{{\mathcal R^{\bar{I},c}_{\partial}}}
\def\Rsi{{\mathcal R^{\star}_{\partial,\R\setminus\Q}}}
\def\RCsi{{\mathcal R^c_{\partial,\R\setminus\Q}}}
\def\Rsr{{\mathcal R^{\star}_{\partial,\Q}}}
\def\Rid{{\mathcal C_{\id}}}
\def\ft{{\tilde{f}}}
\def\gt{{\tilde{g}}}
\def\xit{{\tilde{\xi}}}
\def\xib{{\bar{\xi}}}
\def\fb{{\bar{f}}}
\def\gb{{\bar{g}}}
\newcommand{\id}{\mathop{\mathrm{id}}\nolimits}
\def\Fix{\mathop{\mathrm{Fix}}} 
\def\ITI{\mathop{\mathrm{ITI}}}
\def\ly{\fontencoding{U}\fontfamily{lasy}\fontseries{m}\fontshape{n}\selectfont}
\def\guil#1{\leavevmode\hbox{{\ly(\kern-0.20em(\kern+0.20em}}\nobreak{}\,#1\,%
  \nobreak\hbox{{\ly\kern+0.20em)\kern-0.20em)}}}
\def\norm#1{\left\lVert #1 \right\rVert}
\def\res#1{\mathbin{|}{}_{#1}}
\title{Connectedness of the space of smooth actions of $\Z^n$
on the interval}
\begin{document}

\author{C.~Bonatti, H.~Eynard}

\date{}

\maketitle

\begin{abstract}
We prove that the spaces of $\Cinf$ orientation-preserving actions of $\Z^n$ on $[0,1]$ and nonfree actions of $\Z^2$ on the circle are connected. 
\end{abstract}

\vskip 5mm
\centerline{\LARGE{Connexité de l'espace des actions $\CC^{\infty}$ de $\Z^n$ sur l'intervalle}}

\vskip 5mm
\centerline{\footnotesize{{\bf R\'esum\'e.}}}

\noindent
\footnotesize{ Nous montrons que l'espace des actions $\CC^{\infty}$, pr\'eservant l'orientation, de $\Z^n$ sur l'intervalle $[0,1]$ est connexe, de m\^eme que l'espace des actions non-libres de $\Z^2$ sur le cercle.}

\renewcommand{\thefootnote}{}
\footnote{2010 \emph{Mathematics Subject Classification}:  37E05; 37E10,  37C05, 37C85.}

\footnote{\emph{Key words and phrases}: Interval diffeomorphisms, commuting diffeomorphisms, perturbations, connectedness}
\footnote{This paper has been partially supported by the ANR project  BLAN08-2 313375, DYNNONHYP}


\section{Introduction}
\subsection{General setting}
The theory of foliations is closely related to the study of smooth actions of a finitely generated group. For instance, given closed manifolds $M,N$, any action $\rho\colon \pi_1(M)\to \Diff(N)$ of the fundamental group $\pi_1(M)$ on $N$ induces, by \emph{suspension}, a foliation transverse to a $N$-fiber bundle over $M$ whose holonomy group is $\rho(\pi_1(M))$.  More generally, the transverse structure of a foliation is decribed by the action of a pseudo-group of diffeomorphisms.

Foliations and group actions are natural generalisations of dynamical systems, and their studies share many concepts, questions and arguments.  However, the study of dynamical systems uses in a fundamental way perturbation arguments.  The idea is that describing the behavior of every system may be hopeless, but small perturbations could avoid fragile pathological behaviors. 
Very few works in foliation theory use this strategy (see however \cite{BoFi, Br, Ts}). The main reason is that there are very few perturbation lemmas and that we don't know in general how to perform a non trivial perturbation of a foliation or of a group action.To be concrete, here, we focus on actions of $\Z^n$ on the segment $[0,1]$ or on the circle $\T^1=\R/\Z$. In other words we consider $n$-tuples of commuting diffeomorphisms $(f_1,\dots, f_n)$. One can easily perturb each of the $f_i$ but keeping the commutative relations leads to important difficulty. Let us illustrate this difficulty by an example. One can find in \cite[1975]{RoRo} the following sentence:

\emph{Thus G is a foliation of $\T^3$ by planes $\R^2$, and we must
admit (with much consternation) that we do not know if a stable foliation of
$\T^3$ by planes does exist. This is an important problem.}
%

The underlying problem can be rephrased as follows:
\begin{question} Does there exist $r\geq 1$ and a pair of commuting $C^r$-diffeomorphisms $f,g$ of $\T^1$ with irrational rotation numbers such that any commuting diffeomorphisms $\tilde f,\tilde g$ sufficiently $C^r$-close to $(f,g)$ are jointly (topologically) conjugated to $(f,g)$?  
\end{question} 
As far as we know, this question remains open for any $C^r$-topology. 
In the same spirit, the following question (also attributed to Rosenberg) can be found in \cite{Yo}:
\begin{question}\label{q.cercle} Consider the subset of $\Diff^r(\T^1)^2$ consisting of commuting diffeomorphisms. Is it locally connected?  
\end{question}
This question too remains open for any topology. It was one of the motivations for the deep understanding of the diffeomorphisms of the circle, by M. Herman and J-C. Yoccoz. However, their works did not provide an answer to this precise question which remained ununderstood for many years. Note, however, that the (global) connectedness of the space of $C^1$-actions of $\Z^n$ on $\T^1$ has been proved by A. Navas very recently \cite{Na2}. 

This question is intimately related to the question of local connectedness of the space of $C^r$ codimension $1$ foliations on a given manifold. In 2007, A. Larcanché \cite{La} presented a first relevant progress in that respect, concerning $3$-manifolds. The precise question was: 

\begin{question}\label{q.connexite}  Is there  a
one-to-one correspondance between the connected components of the space of $C^r$-codimension one foliations, $r\in\{1,\dots,\infty\}$, on a given closed $3$-manifold and those of the space of plane fields ?
\end{question}
The fact that any homotopy class of plane fields contains a foliation is due J. Wood \cite{Wo}. 
Larcanché showed that codimension $1$ foliations of a $3$-manifold \emph{in the neighborhood of a taut foliation} can be connected by  (long) smooth pathes of foliations.  
Then, in her thesis \cite{Ey1}, the second author reduced Question~\ref{q.connexite} for $\Cinf$-foliations to the problem of the connectedness of the space of $\Z^2$ $\Cinf$-actions on $[0,1]$. The aim of this paper is to solve this last question, providing the last piece that was missing in \cite{Ey1}. Therefore, the (positive) solution of Question~\ref{q.connexite} for $\Cinf$-foliation will be the aim of \cite{Ey4}. 

\subsection{Precise results}
Let $N$ denote either the segment $[0,1]$ or the circle $\T^1=\R/\Z$, and $\DN$ the group of $\Cinf$ orientation preserving diffeomorphisms of $N$.
A representation of $\Z^n$ into $\DN$ is nothing but the data of $n$ commuting $\Cinf$ diffeomorphisms of $N$. Thus, the space of such representations will be regarded here as the subspace
$$\RNn=\{(f_1,...,f_n)\in(\DN)^n : f_i\circ f_j = f_j\circ f_i\;\forall \;1\le i,j\le n\}\subset (\DN)^n$$
equipped with the induced topology, where $\DN$ is endowed with the usual $\Cinf$ topology.

\begin{theoremeA}\label{t:principal}
The space $\RRn$ of representations of $\Z^n$ into $\DD$ is connected. More precisely, the path-connected component $\Rid$ of $(\id,...,\id)$ is dense in $\RRn$.
\end{theoremeA}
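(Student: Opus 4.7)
The plan is to prove the density claim: for any $(f_1,\dots,f_n)\in\RRn$ and any $\Cinf$-neighborhood $\mathcal V$ of it, produce an element of $\mathcal V\cap\RRn$ joined to $(\id,\dots,\id)$ by a continuous path in $\RRn$. The commutation relations $f_i\circ f_j=f_j\circ f_i$ must be preserved at every instant of the homotopy, which is the fundamental technical constraint.

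The structural tool is the common fixed set $F=\bigcap_{i=1}^n\Fix(f_i)$. Each $f_i$ fixes $F$ pointwise and preserves every connected component $J$ of $[0,1]\setminus F$; by the very definition of $F$, on each such $J$ at least one $f_i$, call it $f_{i(J)}$, has no interior fixed point. The Szekeres--Kopell theorem then produces a $C^1$ vector field $X_J$ on $J$ whose time-one map is $f_{i(J)}|_J$, and forces every $\Cinf$ diffeomorphism of $J$ commuting with $f_{i(J)}|_J$ to be a flow time of $X_J$. Consequently there exist reals $\tau_i^J$ with $f_i|_J=\phi_{X_J}^{\tau_i^J}$, so that the $\Z^n$-action on $J$ factors through the one-parameter group generated by $X_J$.

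This suggests the local homotopy $t\mapsto(\phi_{X_J}^{(1-t)\tau_i^J})_{i=1,\dots,n}$ deforming the action on each component $J$ to the trivial one. The challenge is to assemble such local homotopies into a single $\Cinf$ path on $[0,1]$ matching smoothly across the possibly Cantor-like set $F$, all the more since the Szekeres fields $X_J$ are only $C^1$ a priori. The strategy would then be a two-step density argument. First, approximate $(f_1,\dots,f_n)$ by a commuting tuple whose non-trivial dynamics is concentrated on finitely many "large" components of $[0,1]\setminus F$ and is $\Cinf$-close to $\id$ elsewhere; this lets one absorb the wild part of $F$ into a small ambient homotopy to the identity. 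Second, on each large component, construct a $\Cinf$ path from the restricted action to a commuting perturbation of the identity that is $\Cinf$-flat at the endpoints, so that concatenating with $\id$ outside yields a globally $\Cinf$ path in $\RRn$.

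The principal obstacle is clearly the simultaneous handling of the Cantor-like boundary $\partial F$ and the weak regularity of Szekeres flows: one cannot naively use the flows of the $X_J$ as components of a $\Cinf$ deformation, and $\Cinf$-smoothness must be propagated through all the intermediate steps. One very likely needs an induction --- either on $n$, or on a combinatorial measure of complexity of $F$ --- supported by a technical perturbation lemma that, given any commuting tuple on a sub-interval and any $\Cinf$-neighborhood of it, produces a commuting tuple in that neighborhood which is $\Cinf$-flat at the endpoints. Such a lemma would allow the local-to-global patching to respect $\Cinf$-regularity across $F$ while preserving commutation throughout the homotopy, and is plausibly the heart of the paper.
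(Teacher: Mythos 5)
Your overall architecture --- cut the interval into pieces, realize the restricted action inside a flow via Szekeres--Kopell, approximate that flow by a genuinely smooth one which is flat at the endpoints, and glue --- is the right one, but two load-bearing steps are wrong or missing. First, the decomposition: you cut along $F=\bigcap_i\Fix(f_i)$, whereas the correct cutting set is $\bigcap_i\ITI(f_i)$, the set of common \emph{infinitely flat} fixed points. Smooth gluing of independently deformed pieces is only possible at points where all the deformed diffeomorphisms are ITI; at a common fixed point of finite contact order you cannot set the maps to the identity on one side while keeping them nontrivial on the other. Moreover, your structural claim that ``by the very definition of $F$, on each component $J$ of $[0,1]\setminus F$ at least one $f_i$ has no interior fixed point'' does not follow from the definition of $F$: that definition only excludes a point of $\Int J$ fixed by \emph{all} the $f_i$ simultaneously, and the individual fixed point sets may a priori interleave inside $J$. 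What is actually true (via Kopell and Takens) is that on each component of the complement of the common ITI set, all the $f_i\neq\id$ have the \emph{same} fixed point set with finite contact orders; one must then distinguish whether the common relative translation number is rational --- in which case the action is generated by a single smooth diffeomorphism and already lies in $\Rid$, there being in general \emph{no} global flow since the left and right Szekeres fields at an interior fixed point need not coincide --- or irrational, in which case they do coincide and yield a global $\CC^1$ field.

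Second, and more seriously, the ``technical perturbation lemma'' you invoke at the end --- given a commuting tuple on a subinterval and a $\Cinf$-neighborhood, produce a commuting tuple in that neighborhood which is $\Cinf$-flat at the endpoints --- is not a device one can black-box: it is the entire mathematical content of the problem. Near an endpoint where the generators are ITI, the Szekeres field is in general only $\CC^1$ and its higher derivatives blow up, so any naive mollification destroys $\Cinf$-closeness of the resulting time-$t$ maps. The actual proof requires Sergeraert-type estimates showing that arbitrarily close to such an endpoint there exist whole intervals $[f^{\mp2}(x_0),f^{\pm2}(x_0)]$ on which all derivatives of the field up to a prescribed order are bounded by $|f(x_0)-x_0|^{1-\delta}$, and only with this input can one interpolate between $0$ and such an interval by something smooth \emph{and} $\Cinf$-small. (This is also why the theorem yields density of $\Rid$ rather than path-connectedness: the approximation is anchored on these sparse ``nice'' intervals.) Without that quantitative step, your outline identifies the shape of the argument but does not constitute a proof.
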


\begin{remarque}\begin{enumerate}[label=(\emph{\roman*})]
\item This theorem says nothing about the \emph{path-connectedness} or \emph{local connectedness} of this space, which remain open questions. 
\item \cite{Ey2} contained a weaker result: any representation of $\Z^n$ into $\DD$ can be connected to $(\id,...,\id)$ by a path of $\CC^1$ representations (continuous for the $\CC^1$ topology). This has recently been greatly improved by A. Navas \cite{Na2} who proved that the space of  representations of $\Z^n$ into $\Diff^1_+[0,1]$ (endowed with the $\CC^1$ topology) is connected. While the techniques used in the present article are very similar to those used in \cite{Ey2} (based on the rigidity of the commutativity condition in regularity at least $\CC^2$), they are very different from those used in the $\CC^1$ regularity case in \cite{Na2}.
\item Our proof really uses \emph{infinite} differentiability and does not seem to adapt to finite differentiability (even $\ge 2$). 
\item It is not hard to see, using some ``Alexander's trick"-like argument, that the space of orientation-preserving $\CC^0$ actions of $\Z^n$ on $[0,1]$ is contractible. 
\end{enumerate}
\end{remarque}

\begin{proof}[Idea of the proof.] The cases $n=0$ and $n=1$ are trivial since the space $\DD$ is contractible. Let us now consider the case $n=2$. Of course, deforming a given pair of diffeomorphisms $(f,g)$ into another one is not difficult if one forgets about the commutativity condition, but this constraint adds a lot of rigidity to the problem. Namely, if we restrict to the case of diffeomorphisms $f$, $g$ which are nowhere infinitely tangent to the identity in $(0,1)$ (such pairs will be referred to as ``nondegenerate" in Section 2), classical results by N. Kopell \cite{Ko}, G. Szekeres \cite{Sz} and F. Takens \cite{Ta} (cf. \ref{ss:classic})  imply that $f$ and $g$ belong either to a common infinite cyclic group generated by some $\Cinf$ diffeomorphism $h$ of $[0,1]$ or to a common $\CC^1$ flow ($\CC^\infty$ on $(0,1)$ but in general not $\CC^2$ on $[0,1]$). Then, our strategy is as follows. 

\begin{itemize}
\item In the first case, any isotopy $t\in[0,1]\mapsto h_t$ from $\id$ to $h$ yields a path $t\mapsto (h_t^p,h_t^q)$ of commuting
 $\Cinf$-diffeomorphisms from $(\id,\id)$ to $(f=h^p,g=h^q)$, so $(f,g)$ is actually \emph{in} the path-connected component of $(id,id)$ ($\Rid$) and we have nothing to do. 
\item In the second case, however, extra-work is called for. If $f$ and $g$ are the time-$\alpha$ and $\beta$ maps of a $\CC^1$ vector field $\xi$ ($\CC^\infty$ on $(0,1)$), the idea is to construct a $\Cinf$ vector field $\tilde{\xi}$ whose time-$\alpha$ and $\beta$ maps $\f^\alpha$ and $\f^\beta$ are arbitrarily $\Cinf$ close to $f$ and $g$ respectively. The pair $(\f^\alpha,\f^\beta)$ is then easily connected to $(\id,\id)$ by a continuous path of pairs of commuting $\Cinf$ diffeomorphisms $t\in[0,1]\mapsto (\f^{t\alpha},\f^{t\beta})$. One can then conclude that $(f,g)$ belongs to the closure of $\Rid$. 
\end{itemize}
In other words, what we show is that, among ``nondegenerate" pairs, those made of iterates of a same smooth diffeomorphism or of elements of a same smooth flow (we call such pairs ``clean" in section 2) form a dense and path-connected subset.\medskip

Then, deriving the general result from the restricted one we just mentionned is elementary and is done in section \ref{ss:general}. The case $n>2$ is similar (cf. \ref{ss:Zn}).\medskip

The strategy seems very simple. \emph{But} let us stress that, in the second case above, a random smoothing of the vector field $\xi$ near the boundary won't do in general, for the resulting flow would be no more than $\CC^1$ close to that of $\xi$. So first, one needs to derive some nice estimates on $\xi$ from the knowledge that \emph{some} times of its flow are $\Cinf$ (cf. \ref{ss:approx} and \ref{s:est}). More precisely, if $\xi$ is not $\CC^\infty$ near a point of the boundary, say $0$, according to Takens \cite{Ta}, $f$ and $g$ are necessarily infinitely tangent to the identity at that point. What we show in that case is that, though the derivatives of $\xi$ globally diverge when one approaches $0$, arbitrarily close to $0$, one can find whole intervals (disjoint from $0$) where they are arbitrarily small. These estimates are a generalization of those obtained by F. Sergeraert in \cite{Se} and constitute the heart of this article. Then the rough idea to construct $\xit$ is simply to replace $\xi$ between $0$ and such a ``nice interval" by something smooth \emph{and} ``$\CC^\infty$-small" (the latter being made possible precisely by the estimates on $\xi$ in the ``nice interval"), leaving it unchanged outside this small region. Then the time-$\alpha$ and $\beta$ maps of the new vector field $\xit$ basically coincide with $f$ and $g$ away from the boundary and are very close to the identity there, as are $f$ and $g$ ! \medskip

Note, to conclude, that what our strategy provides in the situation above is an \emph{approximation} of $(f,g)$ by clean pairs, not a \emph{continuous deformation}, simply because between the ``nice intervals" essential to our construction lie  ``nasty" ones. Precisely there lies the gap between connectedness and \emph{path}-connectedness.

\end{proof}

We can then derive a similar result on the circle, but only for the subspace of $\RSn$ made of the $\Z^n$-actions which admit no free sub $\Z^2$-action:
$$\RSdnl=\{(f,g)\in(\DS)^2 : f\circ g = g\circ f\,\text{and}\, \exists (p,q,x)\in\Z^2\setminus\{(0,0)\}\times\T^1\,\text{s.t.}\,f^q\circ g^p(x)=x\}$$
and
$$\RSnnl=\{(f_1,...,f_n)\in(\DS)^n : (f_i,f_j)\in\RSdnl \;\forall \;1\le i,j\le n\}.$$

\begin{theoremeB}\label{t:b}
The space $\RSnnl$ is connected. More precisely, the path-connected component $\Rid$ of $(\id,...,\id)$ is dense in $\RSnnl$.

In other words, every smooth $\Z^n$-action on $\T^1$ without free sub $\Z^2$-action   can be $\CC^\infty$-approached by smooth $\Z^n$-actions isotopic to the identity. 
\end{theoremeB}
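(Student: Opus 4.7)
My plan is to derive Theorem B from Theorem A by approximating each element of $\RSnnl$ by a tuple admitting a common fixed point, then cutting $\T^1$ at that fixed point to reduce to the interval setting. More precisely, given $(f_1, \ldots, f_n) \in \RSnnl$, I aim to produce a $\Cinf$-close tuple belonging to $\Rid$. The approach proceeds in two stages: first, approximate $(f_1, \ldots, f_n)$ by a tuple in $\RSnnl$ whose rotation numbers all vanish; second, cut $\T^1$ at a common fixed point of the approximating tuple and apply Theorem A to the resulting element of $\RRn$.

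For the second stage, once all rotation numbers are zero, each $f_i$ has a fixed point on $\T^1$, and a classical argument yields a \emph{common} fixed point: for two commuting diffeomorphisms $f,g$ each with fixed points, the set $\Fix(f)$ is closed and $g$-invariant; since $g$ has rotation number zero, the $g$-orbit of any $p \in \Fix(f)$ must accumulate on $\Fix(g)$, which then meets the closed set $\Fix(f)$. Iterating gives a common fixed point $x_0$ of the whole tuple. Cutting $\T^1$ at $x_0$ identifies the complement with $(0,1)$ and produces an element of $\RRn$, to which Theorem A applies. Regluing into a sequence of circle tuples requires that the interval approximations produced by Theorem A respect the jet data at $0$ and $1$ coming from the circle germ at $x_0$; this can be arranged because the perturbations used in the proof of Theorem A are localized and can be made to preserve prescribed boundary germs.

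The core difficulty is the first stage. The hypothesis provides, for each pair $(i, j)$, a relation $q_{ij}\,\rho(f_i) + p_{ij}\,\rho(f_j) \in \Z$ with $(p_{ij}, q_{ij}) \neq (0,0)$, which confines the rotation numbers to a low-dimensional $\Q$-affine subspace. When all $\rho(f_i)$ are rational, one can produce a common fixed point of some iterate $(f_i^N)$ and then must collapse the residual cyclic permutation structure on the finite $\langle f_1,\ldots,f_n\rangle$-orbit of $x_0$ into a genuine common fixed point of $(f_1, \ldots, f_n)$, via a commutativity-preserving perturbation. When some $\rho(f_j)$ is irrational, Denjoy's theorem forces the entire tuple to be topologically conjugate, through a single (in general non-smooth) homeomorphism, to a tuple of rigid rotations; one cannot simply conjugate through this homeomorphism to reduce rotation numbers while keeping smoothness, so a direct $\Cinf$ perturbation of $(f_1, \ldots, f_n)$ must be exhibited that both preserves commutativity and drives all rotation numbers to zero. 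This irrational rotation-number case is the main technical obstacle of the proof.
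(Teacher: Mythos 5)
Your first stage cannot work: the rotation number $\rho\colon\DS\to\T^1$ is continuous, so a tuple $(f_1,\dots,f_n)$ with some $\rho(f_i)\neq 0$ admits no $\CC^0$-small (let alone $\CC^\infty$-small) perturbation with vanishing rotation numbers --- already $(R_{1/2},\id,\dots,\id)\in\RSnnl$ is a counterexample to Stage 1. The same obstruction kills the step ``collapse the residual cyclic permutation structure into a genuine common fixed point'' in your rational case: if $\rho(f_i)=j/N$ with $j\not\equiv 0 \pmod N$, no perturbation, commutativity-preserving or not, gives $f_i$ a fixed point. Note that Theorem B only asks for approximation by actions \emph{isotopic} to the trivial one, not by actions near it: $(R_{1/2},\id,\dots,\id)$ is itself isotopic to the trivial action via $(R_{t/2},\id,\dots,\id)$, and that is the kind of conclusion one must aim for. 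Accordingly, the paper never kills the rotation numbers: it chooses a basis $(f,g_2,\dots,g_n)$ of $\Z^n$ with the $g_i$ in the kernel of $\rho\circ F$ and $\rho(F(f))$ generating the (finite) image, keeps $f$ essentially untouched, applies the interval result only to the first-return map $f^k$ and the $g_i$ on a fundamental interval of the finite orbit of a common flat fixed point (gluing via Remark \ref{r:ITI}), reconstructs the $g_{j,t}$ on the other intervals by conjugation with powers of $f_t$, and ends at $(f_1,\id,\dots,\id)$, which is isotopic to the trivial action simply because $\DS$ is connected.

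Two further points. First, the irrational case you flag as ``the main technical obstacle'' is dispatched rather than solved head-on: if some element has irrational rotation number, the absence of a free sub-$\Z^2$-action forces the kernel of $\rho$ in $F(\Z^n)$ to have rank $n-1$, and Denjoy's theorem (density of the orbits of the irrational element) together with the invariance of the closed set $\Fix(g)$ forces every $g$ in this kernel to be the identity; the action is then non-injective and one concludes by induction on $n$ after passing to a quotient circle. Second, when the common fixed point of the kernel is \emph{not} infinitely flat, the perturbations of Theorem A do not preserve prescribed non-flat boundary germs, so the cut-and-reglue you invoke fails there; the paper treats that case by a different argument, producing a single smooth $f$-invariant vector field $X$ on $\T^1$ via the relative translation numbers (an adaptation of Proposition \ref{t:kst} to the circle) and isotoping $g_i=X_{t_i}$ to the identity along its flow.
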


\begin{remarque}
The nonfreeness condition on the action of $\Z^2$ on $\T^1$ defined by a pair $(f,g)$ is equivalent to the existence of $(p,q)\in\Z^2\setminus\{(0,0)\}$ such that $p\rho(f)+q\rho(g)\in\Z$, where $\rho(f)$ and $\rho(g)$ denote the rotation numbers of $f$ and $g$ (cf. Notations and formulae).
\end{remarque}

\subsection{Conclusion}
Let us conclude this introduction by summerizing what remains to be done in order to get the connectedness of the  space of smooth $\Z^2$-actions on $\T^1$.

If, contrary to our setting, the action defined by $(f,g)\in\RSd$ is free, $\rho(f)$ and $\rho(g)$ are irrational numbers and either do or do not satisfy a joint diophantine condition (cf.~\cite{F-K} for the precise meaning of this).
\begin{itemize}
\item If they do, B.~Fayad and K.~Khanin \cite{F-K} proved that $f$ and $g$ are simultaneously conjugate to the rotations of angle $\rho(f)$ and $\rho(g)$, denoted by $R_{\rho(f)}$ and $R_{\rho(g)}$ respectively, by an element $\f$ of $\DS$. The pair $(f,g)$ is thus connected to $(\id,\id)$ by the path $t\in[0,1]\mapsto (\f^{-1}\circ R_{t\rho(f)}\circ \f, \f^{-1}\circ R_{t\rho(g)}\circ \f)$ of smooth commuting diffeomorphisms. 
\item If they do not, $(f,g)$ is not necessarily smoothly conjugate to $(R_{\rho(f)},R_{\rho(g)})$. Nevertheless, according to M. Benhenda \cite{Be}, there exists a Baire-dense subset $B$ of $\T^1$ such that, if $\rho(f)$ \emph{or} $\rho(g)$ belongs to $B$, $(f,g)$ can be \emph{approached} by pairs which \emph{are} smoothly conjugate to $(R_{\rho(f)},R_{\rho(g)})$. Thus $(f,g)$ belongs to the \emph{closure} of the path-connected component of $(\id,\id)$. 
\end{itemize}

It is not known, however, whether this last fact holds for \emph{any} pair $(\rho(f),\rho(g))\in(\R\setminus\Q)^2$. A positive answer would imply the connectedness of the whole space of $\Z^2$-actions on the circle.

\begin{remark}
In this general case, what is already known from \cite{Yo} is that $f$ and $g$ can be approached \emph{seperately} by diffeomorphisms which are smoothly conjugate to $R_{\rho(f)}$ and $R_{\rho(g)}$ respectively. What is not known is whether the same conjugating diffeomorphism can be used for $f$ and $g$.
\end{remark}

\section*{Notations and formulae}

\noindent\textbf{Derivatives.} Let $k\in \N$, $I$ be an interval of $\R$, $f$ a $\CC^k$ function on $I$ and $J$ a subset of $I$. We write
$$\norm{f}_{k} = \underset{{x\in I}\atop{0\le i\le k}}{\sup} |D^if(x)|\in[0,+\infty]$$
and
$$\norm{f}_{k,J} = \underset{{x\in J}\atop{0\le i\le k}}{\sup} |D^if(x)|\in[0,+\infty].$$
If $g$ is a $\CC^2$ orientation preserving diffeomorphism of $I$, we define :
\begin{equation*}
Lg = D \log Dg = \frac{ D^2 g }{ Dg } .
\end{equation*}
The operator $L$ satisfies the following ``derivation" formulas:
\begin{equation} \label{e:chainrule}
   L (h \circ g) = Lh \circ g \cdot Dg + Lg \quad\text{and}\quad    L g^k = \sum_{i=0}^{k-1}Lg \circ g^i \cdot Dg^i.
\end{equation}

A function is said to be $\CC^k$-\emph{flat} (resp. \emph{infinitely flat}) at some point if its derivatives of order $0$ to $k$ (resp. all its derivatives) exist and vanish at this point. \medskip

\noindent\textbf{Fixed points.} We denote by $\Fix(f)$ the set of fixed points of a diffeomorphism $f$. We say that $f$ is $\CC^k$-\emph{tangent to the identity} (resp. \emph{infinitely tangent to the identity}) at a point if $f-\id$ is $\CC^k$-flat (resp. infinitely flat) at this point. We will abbreviate ``infinitely tangent to the identity" by ``ITI", and denote by $\ITI(f)$ the set of points where $f$ is ITI.\medskip


\noindent\textbf{Diffeomorphisms of the circle.} We denote by $\DT$ (resp. $\DTo$) the space of $\Cinf$ orientation preserving diffeomorphisms (resp. homeo\-mor\-phisms) of $\R$ which commute to the unit translation $x\mapsto x+1$, both endowed with their usual topology. An element $f$ of $\DT$ naturally projects to an element $\pi(f)$ of $\DS$. One can consider $\DT$ together with this projection $\pi$ as the universal cover of $\DS$. Note that if the projections $\pi(f)$ and $\pi(g)$ of two elements of $\DT$ commute, then so do $f$ and $g$. 


We denote by $R_\alpha\in\DT$ the translation $x\mapsto x+\alpha$. Recall that when $\alpha\in\R\setminus \Q$, the centralizer of $R_\alpha$ in $\DTo$ is the group of translations $(R_\beta)_{\beta\in\R}$. 

For all $f\in\DT$ (resp. $\in\DS$), let $\rho(f)\in\R$ (resp. $\T^1$) denote the rotation number of $f$. 
The rotation number of $f$ equals $p/q$, $(p,q)\in\Z\times \N^*$, $p\wedge q=1$, if and only if the $1$-periodic map $f^q-\id-p$ vanishes somewhere, and $f$ is $\Cinf$ conjugate to the rotation $R_{p/q}$ if and only if this function is everywhere zero. If $\rho(f)\in\R\setminus\Q$, Denjoy proved that $f$ is $\CC^0$-conjugate to $R_{\rho(f)}$. \medskip

\noindent\textbf{Vector fields vs. functions.} We will often make no difference between a vector field $\nu\partial x$ on $I\subset \R$ and the  function $\nu$ on the same interval, $x$ denoting the coordinate on $\R$, and, consequently, between the pull-back $h^*(\nu \partial x)$ of $\nu\partial x$ by a diffeomorphism $h$ and the function $\nu\circ h/Dh$.


\section{$\Z^n$-actions on the segment}

Most of this section is devoted to the proof of the connectedness of the space $\RRd$ of smooth orientation-preserving $\Z^2$-actions on $[0,1]$, i.e Theorem A for $n=2$. We explain in \ref{ss:Zn} how to extend it to all $n\ge 2$. For now, let us abbreviate $\RRd$ by $\RR$. The first (and key) step towards Theorem A is to prove the connectedness of a particular subclass $\Rss\subset \RR$ of ``non-degenerate" representations (cf. \ref{ss:classic} and \ref{ss:approx}). The generalization to the whole space $\RR$ is then carried out in \ref{ss:general}.

\subsection{Rigidity results for commuting interval diffeomorphisms}\label{ss:classic}
 Before introducing $\Rss$, let us recall some classical facts about commuting interval diffeomorphisms. Along with the original references, we refer the reader to \cite{Na1} and \cite{Yo} for detailed proofs and much more. 

\subsubsection{Original statements}\label{sss:kst}
Let $a<b$ be two real numbers.
\begin{theorem}[Szekeres \cite{Sz}]\label{t:Szekeres}
Every diffeomorphism $f\in \Diff^r[a,b)$, $r \ge 2$, without fixed point in
$(a,b)$ is the time-$1$ map of a vector field on $[a,b)$ which is $\CC^1$ on
$[a,b)$ and $\CC^{r-1}$ on $(a,b)$.
\end{theorem}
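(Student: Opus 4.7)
The plan is to proceed in two stages: first build $\xi$ on the open interval $(a,b)$ by an equivariant extension from a fundamental domain, then establish the regularity of $\xi$ at the fixed point $a$.

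Up to replacing $f$ by $f^{-1}$, one may assume $f(x)>x$ on $(a,b)$; since $f\in\Diff^r[a,b)$, automatically $f(a)=a$. I would fix $x_0\in(a,b)$ and set $I_0=[x_0,f(x_0)]$, so that the iterates $\{f^n(I_0)\}_{n\in\Z}$ tile $(a,b)$. Then I would choose a strictly positive $\Cinf$ function $\xi$ on $I_0$, normalised by $\int_{x_0}^{f(x_0)}\frac{dx}{\xi(x)}=1$, and extend it to all of $(a,b)$ by the equivariance relation $\xi\circ f=\xi\cdot Df$. This relation is exactly the condition that $f$ commute with the flow of $\xi$, so combined with the normalisation it forces the time-$1$ map of $\xi$ to agree with $f$ on the orbit of $x_0$, hence everywhere.

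To ensure the extended $\xi$ is globally $\CC^{r-1}$ on $(a,b)$ rather than only on the interior of each tile, the $(r-1)$-jets of $\xi$ at $x_0$ and at $f(x_0)$ must be related by the recursion obtained by differentiating $\xi\circ f=\xi\cdot Df$ successively. Once a germ of $\xi$ at $x_0$ is fixed arbitrarily, this recursion prescribes the Taylor polynomial of $\xi$ at $f(x_0)$ up to order $r-1$; I can then realise both germs simultaneously by a positive $\Cinf$ function on $I_0$ satisfying the integral normalisation, exploiting the freedom in the interior of the fundamental domain. Equivariance automatically propagates the matching to every junction point $f^n(x_0)$, so $\xi$ is globally $\CC^{r-1}$ on $(a,b)$.

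The main obstacle, and the heart of the theorem, is $\CC^1$ regularity at $a$. Differentiating $\xi\circ f=\xi\cdot Df$ formally at $x=a$ shows that the only admissible values are $\xi(a)=0$ and $\xi'(a)=\log f'(a)$; the real question is whether these are actually limits of $\xi$ and $\xi'$ as $x\to a$, since the fundamental-domain construction provides no a priori control over $\xi$ as the tiles $f^n(I_0)$ accumulate on $a$. If $f$ is hyperbolic at $a$, i.e.\ $f'(a)\neq 1$, I would invoke a smooth linearisation theorem in dimension one --- no resonances can occur --- to conjugate $f$ near $a$ to the linear model $x\mapsto f'(a)(x-a)$, for which $\log f'(a)\cdot(x-a)$ is an explicit smooth Szekeres field that matches the fundamental-domain construction. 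In the parabolic case $f'(a)=1$, one has $f(x)=x+c(x-a)^k+o((x-a)^k)$ for some $k\ge 2$ and $c>0$, and the delicate step is to establish fine bounded-distortion estimates on the iterates $Df^n$, sufficient to show that $\xi(x)/(x-a)$ tends to a finite limit as $x\to a$. This parabolic analysis will be the most subtle part of the proof and is precisely what explains the drop in regularity from $\CC^{r-1}$ on $(a,b)$ to only $\CC^1$ at $a$.
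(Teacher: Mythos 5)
Your overall architecture is sensible and you correctly locate the difficulty at the fixed point $a$, but as written the argument has two genuine gaps. First, the fundamental-domain construction with an \emph{arbitrary} positive germ on $I_0$ is incompatible with the $\CC^1$ conclusion at $a$: by Kopell's lemma the centralizer of $f$ in $\Diff^1[a,b)$ is a single one-parameter flow, so there is exactly \emph{one} $\CC^1$ vector field on $[a,b)$ whose time-$1$ map is $f$, whereas your construction produces an uncountable family of $\CC^{r-1}$ fields on $(a,b)$, all but one of which fail to extend $\CC^1$ to $a$. You cannot both ``fix a germ of $\xi$ at $x_0$ arbitrarily'' and hope the result is $\CC^1$ at $a$; you must produce the distinguished Szekeres field, e.g.\ as $\xi=\tau_a\lim_{k\to+\infty}(f^{-k})^*\bigl((f-\id)\partial x\bigr)$ with $\tau_a=\log Df(a)/(Df(a)-1)$ when $Df(a)\neq1$, which is the route taken in the references the paper cites and is exactly the formula the paper uses later (Section 5, equations \eqref{e:xi} and \eqref{e:Dxi}); the substance of the proof is the uniform convergence of that limit and of the associated series for $D\xi$ up to the endpoint.

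Second, the case $Df(a)=1$ --- the heart of the theorem --- is announced rather than proved, and the plan for it is already too narrow: you posit $f(x)=x+c(x-a)^k+o((x-a)^k)$ with $k\ge2$, which excludes precisely the hardest situation, namely $f-\id$ being $\CC^r$-flat (or infinitely flat) at $a$. That case cannot be set aside: it is the one where Sergeraert's example shows the field need not be $\CC^2$ at $a$, and it is the case this paper actually needs. Moreover, a finite limit of $\xi(x)/(x-a)$ only gives differentiability of $\xi$ at $a$; the statement ``$\CC^1$ on $[a,b)$'' also requires $D\xi(x)\to\xi'(a)=\log Df(a)$ as $x\to a$, which is what the series expression $D\xi=\log Df(a)-\sum_{i\ge0}(Lf^{-1}\cdot\xi)\circ f^{-i}$ and its estimates are for. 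In short: correct diagnosis of where the difficulty lies, but the difficult part is not carried out, and the arbitrary equivariant extension must be replaced by the canonical limit construction.
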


\begin{remark} In general, ``$\CC^1$" cannot be replaced by ``$\CC^{r-1}$" in the above statement. Indeed, in \cite{Se}, F. Sergeraert constructed an $f$ in $\Diff^\infty[a,b)$ without fixed point in
$(a,b)$ and which \emph{does not} imbed in a $\CC^2$ flow. 
\end{remark}

\begin{theorem}[Kopell's Lemma \cite{Ko}]\label{t:Kopell}
Let $f$ and $g$ be commuting diffeomorphisms of $[a,b)$ of class $\CC^2$ and $\CC^1$ respectively. If $f$ has no fixed point in $(a,b)$ and $g$ has at least one, then $g = \id$. 
\end{theorem}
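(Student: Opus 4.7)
The plan is to assume $g \not\equiv \id$ and derive a contradiction with the $\CC^1$-regularity of $g$, via the standard bounded-distortion estimate for $f^n$ that its $\CC^2$-regularity provides. The subtle point will be that the distortion constant for $f^n$ need not be close to $1$, so I will have to extract a relative (not absolute) displacement estimate that forces $\sup |Dg-1|$ to stay bounded below on shrinking intervals accumulating at~$a$, eventually contradicting continuity of $Dg$ at $a$. Up to replacing $f$ by $f^{-1}$, I may assume $f(x) < x$ on $(a, b)$, and fix $c \in (a, b)$ with $g(c) = c$; since $fg = gf$, each $f^n(c)$ is a fixed point of $g$, and the sequence $(f^n(c))$ tends to $a$.

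Suppose first that $g \not\equiv \id$ on $[a, c]$, and let $(u, v) \subset [a, c]$ be a maximal open subinterval on which $g - \id$ does not vanish, so $g(u) = u$ and $g(v) = v$. Commutativity implies that $f$ preserves $\Fix(g)$, so $f(v) \in \Fix(g)$; but no fixed point of $g$ lies in $(u, v)$ and $f(v) < v$, hence $f(v) \leq u$. Iterating, the intervals $I_n := (f^n(u), f^n(v))$, $n \geq 0$, are pairwise disjoint subintervals of $(a, v]$, and in particular $|I_n| \to 0$.

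Next I use the $\CC^2$-regularity of $f$ to bound the distortion of $f^n$ on $(u, v)$. By formula \eqref{e:chainrule} and the mean value theorem, for all $y, y' \in (u, v)$ and all $n \geq 0$,
\[
\bigl|\log Df^n(y) - \log Df^n(y')\bigr| \;\leq\; \sup_{[a,v]}|Lf| \cdot \sum_{i=0}^{n-1} |f^i(y) - f^i(y')| \;\leq\; \sup_{[a,v]}|Lf| \cdot (v - a),
\]
the last inequality coming from $|f^i(y) - f^i(y')| \leq |I_i|$ and the disjointness of the $I_i$. Hence there exists $C \geq 1$ with $Df^n(y)/Df^n(y') \leq C$ for all $y, y' \in (u, v)$ and all $n$. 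Pick $y^* \in (u,v)$ realizing $d_0 := g(y^*) - y^* = \max_{(u,v)}(g - \id) > 0$. Writing $g(f^n(y^*)) - f^n(y^*) = f^n(g(y^*)) - f^n(y^*) = Df^n(\xi_n)\, d_0$ and $|I_n| = Df^n(\eta_n)(v - u)$ via the mean value theorem, the distortion bound yields a displacement estimate independent of $n$:
\[
g(f^n(y^*)) - f^n(y^*) \;\geq\; \delta\, |I_n|, \qquad \delta := \frac{d_0}{C(v-u)} > 0.
\]

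To conclude, since $g$ fixes both endpoints of $I_n$, writing $g(x) - x = \int_{f^n(u)}^x (Dg - 1)$ and using the previous lower bound gives $\int_{I_n} |Dg - 1| \geq \delta |I_n|$, hence $\sup_{I_n} |Dg - 1| \geq \delta$. Pick $x_n \in I_n$ with $|Dg(x_n) - 1| \geq \delta$; then $x_n \to a$. On the other hand, the fixed points $f^n(c)$ of $g$ accumulate at $a$, so $Dg(a) = 1$, and continuity of $Dg$ at $a$ forces $|Dg(x_n) - 1| \to 0$, contradicting $\delta > 0$. Thus $g \equiv \id$ on $[a, c]$. Applying the same argument to the successive fixed points $f^{-k}(c) \in \Fix(g) \cap (a, b)$, which tend to $b$, yields $g \equiv \id$ on all of $[a, b)$.
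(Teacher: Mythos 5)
Your argument is correct and is essentially the classical proof of Kopell's lemma that the paper itself only cites (via \cite{Ko} and \cite{Na1}) rather than proves: the disjoint intervals $I_n=(f^n(u),f^n(v))$ accumulating at $a$, the bounded distortion of $f^n$ supplied by the $\CC^2$ hypothesis, and the contradiction between $\sup_{I_n}|Dg-1|\ge\delta$ and $Dg(a)=1$ (the latter forced by the fixed points $f^n(c)\to a$). The one point to make explicit is the normalization $d_0=\max_{(u,v)}(g-\id)>0$: since $g-\id$ has constant sign on $(u,v)$ but could be negative, either replace $g$ by $g^{-1}$ (which is still $\CC^1$, commutes with $f$ and has the same fixed-point set) or run the displacement estimate with $|g-\id|$ throughout.
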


\begin{corollary}\label{t:Z1}
Let $f \in \Diff^r[a,b)$, $r \ge 2$, without fixed point in $(a,b)$. There exists a unique $\CC^1$-vector field on $[a,b)$ having $f$ as time-$1$ map. We call it \emph{the Szekeres vector field of $f$} and denote it by $\xi_f^{[a,b)}$. The centralizer of $f$ in $\Diff^1[a,b)$ consists of the flow maps of this vector field and is thus a one-parameter group of $\CC^1$-diffeomorphisms.
\end{corollary}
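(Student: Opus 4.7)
The plan is to read off existence of $\xi$ directly from Szekeres's theorem (Theorem \ref{t:Szekeres}), and to obtain both the centralizer description and the uniqueness of $\xi$ from Kopell's lemma (Theorem \ref{t:Kopell}). Writing $\f^t$ for the time-$t$ map of the flow of $\xi$, the first preliminary observation is that $\xi$ is nonvanishing on $(a,b)$: a zero of $\xi$ would be a common fixed point of the entire flow, contradicting the hypothesis that $f=\f^1$ has no fixed point in $(a,b)$. Hence, for any $x_0\in(a,b)$, the orbit map $t\mapsto \f^t(x_0)$ is a strictly monotone $\CC^1$ bijection from $\R$ onto $(a,b)$ (its limits at $\pm\infty$ being forced to the endpoints $a,b$), so the flow acts transitively on $(a,b)$. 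This is what will let us move any point to any other by the flow.

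For the centralizer description, let $g\in\Diff^1[a,b)$ commute with $f$. The idea is to subtract off an appropriate flow element so as to produce a fixed point and then invoke Kopell. Concretely, pick $x_0\in(a,b)$, use transitivity to select the unique $t_0\in\R$ with $\f^{t_0}(x_0)=g(x_0)$, and set $h=\f^{-t_0}\circ g$. Then $h\in\Diff^1[a,b)$, $h(x_0)=x_0$, and $h$ commutes with $f$ (since both $g$ and $\f^{-t_0}$ do). Since $f$ is $\CC^r$ with $r\ge 2$ and has no fixed point in $(a,b)$, Kopell's lemma forces $h=\id$, i.e.\ $g=\f^{t_0}$. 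The reverse inclusion is trivial ($\f^t\in\Diff^1[a,b)$ and commutes with $f=\f^1$), so the centralizer of $f$ in $\Diff^1[a,b)$ is exactly the one-parameter group $\{\f^t\}_{t\in\R}$.

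Uniqueness of $\xi$ then follows formally. If $\xi_1,\xi_2$ are two such fields with flows $\f^t_i$, each $\f^t_2$ lies in the centralizer of $f$ and thus, by the previous step, equals $\f^{s(t)}_1$ for a unique $s(t)\in\R$ (uniqueness by injectivity of $u\mapsto \f^u_1(x_0)$). The group laws for the two flows make $s\colon\R\to\R$ an additive homomorphism; its continuity (inherited from that of the flows) makes it linear, $s(t)=ct$, and $\f^1_1=f=\f^1_2$ combined once more with injectivity of $u\mapsto\f^u_1(x_0)$ gives $c=1$. Hence $\f^t_1=\f^t_2$ for all $t$ and therefore $\xi_1=\xi_2$. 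The step carrying the weight is the application of Kopell's lemma: it is what turns the $\CC^2$-rigidity of a boundary-hyperbolic diffeomorphism into the statement that its $\CC^1$-centralizer is no bigger than its flow. Once that is available, the rest is essentially formal manipulation of one-parameter groups.
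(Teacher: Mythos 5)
Your proof is correct and follows exactly the derivation the paper intends (the result is stated as a corollary of Theorems \ref{t:Szekeres} and \ref{t:Kopell} with the details deferred to the references): existence from Szekeres, then the centralizer description by composing $g$ with a flow map to create an interior fixed point and invoking Kopell, with uniqueness of the field following formally. The only points worth making explicit are the completeness of the flow (needed for the orbit map $t\mapsto\f^t(x_0)$ to be onto $(a,b)$), which follows since $f^n=\f^n$ is defined for all $n$, but these are routine.
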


Of course, similar results hold for diffeomorphisms of $(a,b]$.\medskip

Thus, if $g\in \Diff^1[a,b)$ commutes with such an $f\in \Diff^r[a,b)$, it is the time-$\tau$ map of the Szekeres vector field of $f$ for some $\tau\in\R$, which we refer to as the \emph{translation number of $g$ with respect to $f$}. In the next section, we will see that this notion of translation number extends to certain pairs $(f,g)\in\RR$ under some non-degeneracy condition. But beforehand, let us introduce one last classical result:

\begin{theorem}[Takens] \label{t:Takens}
If $f \in \Diff^\infty_+ [b,c)$ has no fixed point in $(b,c)$ and is not ITI at $b$, its Szekeres vector field $\xi_f^{[b,c)}$ is smooth on all of $[b,c)$. 

Furthermore, if $f \in \Diff^\infty_+ (a,c)$ has a unique fixed point $b$ in $(a,c)$ and is not ITI there, 
the \emph{smooth} vector fields $\xi_f^{(a,b]}$ and $\xi_f^{[b,c)}$ match up smoothly at $b$ and yield a $\Cinf$ vector field on
$(a,c)$ whose time-$1$ map is $f$. Moreover, for any $\CC^\infty$-diffeomorphism of
$(a,c)$ commuting with $f$ (necessarily fixing $c$), and thus, according to \ref{t:Z1}, coinciding on $(a,b]$  (resp. $[b,c)$)  with some time-$\tau$ (resp. $\tau'$) map of $\xi_f^{(a,b]}$ (resp. $\xi_f^{[b,c)}$), the times $\tau$ and $\tau'$ coincide. 
\end{theorem}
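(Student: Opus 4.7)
The plan is to reduce everything to classical smooth normal-form theorems for interval diffeomorphisms fixing a non-ITI point. The main obstacle is the upgrade from $\CC^1$ to $\CC^\infty$ of the Szekeres vector field \emph{at} the endpoint $b$; once that is secured, the smooth matching across $b$ and the equality of the two one-sided translation numbers of a commuting $g$ will come from the one-sided uniqueness statement in Corollary~\ref{t:Z1} together with an inspection of the $\infty$-jet at $b$.

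\textbf{One-sided smoothness up to $b$.} Given $f\in\Diff^\infty_+[b,c)$ with no fixed point in $(b,c)$ and not ITI at $b$, I would split into two cases. In the hyperbolic case $Df(b)\neq 1$, Sternberg's $\CC^\infty$ linearization theorem furnishes a $\Cinf$ diffeomorphism $h$ of a right neighborhood $U$ of $b$ conjugating $f|_U$ to the affine map $x\mapsto b+Df(b)(x-b)$, whose Szekeres vector field is the smooth field $(x-b)\log Df(b)\,\partial x$. Pulling back by $h$ yields a $\Cinf$ vector field on $U$ with time-$1$ map $f|_U$, which must coincide with $\xi_f^{[b,c)}|_U$ by the uniqueness part of Corollary~\ref{t:Z1}. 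In the parabolic case $Df(b)=1$, the non-ITI hypothesis is precisely what allows one to solve the formal conjugacy problem at $b$ and embed $f|_U$ in a $\Cinf$ local flow (the parabolic half of Takens' normal-form machinery); the generator of this flow is then identified with $\xi_f^{[b,c)}|_U$ by the same uniqueness argument. Either way, $\xi_f^{[b,c)}$ extends smoothly to $b$.

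\textbf{Smooth gluing and equality of translation numbers.} For $f\in\Diff^\infty_+(a,c)$ with unique fixed point $b$, I would apply the same normal-form argument on a two-sided neighborhood $U$ of $b$ to produce a single $\Cinf$ vector field $\zeta$ on $U$ with $\f_\zeta^1=f|_U$. By one-sided uniqueness (Corollary~\ref{t:Z1}), $\zeta$ coincides with $\xi_f^{(a,b]}$ on $U\cap(a,b]$ and with $\xi_f^{[b,c)}$ on $U\cap[b,c)$, so the two Szekeres fields concatenate into a global $\Cinf$ vector field on $(a,c)$ whose time-$1$ map is $f$. For the last assertion, let $g\in\Diff^\infty_+(a,c)$ commute with $f$; since $g$ preserves $\Fix(f)\cap(a,c)=\{b\}$ it fixes $b$, and Corollary~\ref{t:Z1} applied on each side yields $\tau,\tau'\in\R$ with $g|_{(a,b]}=\f_\zeta^\tau$ and $g|_{[b,c)}=\f_\zeta^{\tau'}$. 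The non-ITI hypothesis on $f$ makes the $\infty$-jet of $\f_\zeta^t$ at $b$ depend injectively on $t$ --- through $D\f_\zeta^t(b)=e^{t\zeta'(b)}$ when $\zeta'(b)\neq 0$, and otherwise through the leading non-vanishing coefficient of $\f_\zeta^t-\id$ at $b$, which is a nonzero scalar multiple of $t$. Since $g$ is $\Cinf$ at $b$, its two one-sided $\infty$-jets there agree, and this forces $\tau=\tau'$.
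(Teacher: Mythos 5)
You should be aware that the paper itself offers no proof of this statement: it is quoted as Takens' classical theorem, with \cite{Ta}, \cite{Na1} and \cite{Yo} given as references, so there is no in-paper argument to measure yours against. Where your outline is actually an argument, it is the standard one and it is sound: reducing one-sided smoothness at $b$ to the existence of a local smooth flow-embedding plus the uniqueness clause of Corollary~\ref{t:Z1}; gluing the two one-sided Szekeres fields through a single two-sided local generator $\zeta$ by the same uniqueness (modulo the routine point that one must restrict to an $f$-invariant subinterval of $U$ before invoking \ref{t:Z1}); and deducing $\tau=\tau'$ from the injectivity of $t\mapsto j^\infty_b\varphi_\zeta^t$, which indeed holds because the non-ITI hypothesis makes either $D\varphi_\zeta^t(b)=e^{t\log Df(b)}$ or the leading coefficient of $\varphi_\zeta^t-\id$ at $b$ a nonzero linear function of $t$. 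These last two steps are correct completions of details the paper leaves implicit.

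The one genuine weakness is the parabolic case $Df(b)=1$ of the first assertion: saying that the non-ITI hypothesis ``allows one to solve the formal conjugacy problem at $b$ and embed $f|_U$ in a $\Cinf$ local flow (the parabolic half of Takens' normal-form machinery)'' is not a proof but a restatement of the theorem. The formal embedding is indeed solvable order by order, but the substance of Takens' result is precisely the promotion of that formal solution to an honest smooth local vector field whose time-one map \emph{equals} $f$ (rather than merely agreeing with it to infinite order at $b$); that step requires real work (a fixed-point or bootstrapping argument on the flat discrepancy) and you invoke it rather than supply it. Since the paper treats the entire statement as a black box, this is a citation-level gap rather than an error, but your proof of the first assertion in the parabolic case currently consists of citing the assertion.
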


\subsubsection{Nondegenerate representations}\label{sss:ndr}
Note that if $f\in\DD$, for any connected component $(a,b)$ of $[0,1]\setminus \Fix(f)$, all the statements of \ref{sss:kst} above apply both to $f\res{[a,b)}$ and $f\res{(a,b]}$, and one can show the following:

\begin{corollary}\label{c:kst}
Let $(f,g)\in\RR$, and $J$ be the closure of a connected component of $[0,1]\setminus \ITI(f)\cap\ITI(g)$. If $f\res{J}$ and $g\res{J}$ differ from the identity, they have the same fixed points in the interior of $J$, and the same finite order of contact to the identity at each of them.
\end{corollary}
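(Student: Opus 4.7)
The plan is to argue by contradiction, assuming both $f|_J \neq \id$ and $g|_J \neq \id$, using three ingredients: the \emph{Rolle observation} that a fixed point of a smooth diffeomorphism at which it is not ITI must be isolated in the fixed point set (proved by iterating Rolle's theorem on $f - \id$ applied to sequences of accumulating fixed points); Kopell's Lemma, to pin one of $f,g$ to the identity on a subinterval; and Takens' Theorem, to propagate that identity past boundary points through the matching of translation numbers.

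For the equality of fixed points in $J^\circ$, suppose $x_0 \in \Fix(f) \cap J^\circ$ with $g(x_0) \neq x_0$, and let $(p,q)$ be the connected component of $[0,1] \setminus \Fix(g)$ containing $x_0$. Orientation-preservation plus commutation force $f(p) = p$ and $f(q) = q$, and Kopell's Lemma applied to the pair $(g,f)$ on $[p,q]$ gives $f \equiv \id$ on $[p,q]$; a quick check (the endpoints of $J$ are ITI, hence fixed, for $g$ and so cannot lie in the open $(p,q)$) yields $[p,q] \subseteq J$. If $f$ is not ITI at $x_0$, the Rolle observation makes $x_0$ isolated in $\Fix(f)$, contradicting $f \equiv \id$ on the neighborhood $(p,q)$ of $x_0$. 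Otherwise $f$ is ITI at $x_0$, so by the definition of $J^\circ$, $g$ is not.

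In this remaining case, pick $y \in J^\circ$ with $f(y) \neq y$ (possible since $f|_J \neq \id$), say $y > q$ (the other case is symmetric), and set $b^* = \sup\{z \le b : f \equiv \id \text{ on } [p,z]\} \in [q, y) \subset J^\circ$. A supremum-maximality argument shows $g(b^*) = b^*$: commutation gives $g([p,b^*]) = [p, g(b^*)] \subseteq \Fix(f)$, so $g(b^*) \le b^*$ (else we contradict the supremum), and the analogous argument applied to $g^{-1}$ gives the reverse inequality. At $b^*$, $f$ is ITI (being identity on the left), so by the $J^\circ$ condition $g$ is not ITI, and the Rolle observation makes $b^*$ isolated in $\Fix(g)$. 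Takens' Theorem applied to $g$ at $b^*$ then provides a smooth $\xi_g$ on a neighborhood of $b^*$, with $f$ equal to time-$\tau$ and time-$\tau'$ maps of $\xi_g$ on the left and right of $b^*$ and $\tau = \tau'$. The left side forces $\tau = 0$ (since $f \equiv \id$ there while $\xi_g$ is nonvanishing), hence $\tau' = 0$, forcing $f \equiv \id$ on a right-neighborhood of $b^*$ and contradicting the definition of $b^*$.

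For the orders of contact at a common fixed point $x_0 \in J^\circ$, at least one of $f, g$ is not ITI at $x_0$; without loss of generality, $f$. Then $x_0$ is isolated in $\Fix(f)$ by Rolle, and Takens' Theorem yields a smooth $\xi_f$ near $x_0$ with $g = \phi_{\xi_f}^\tau$ locally. When $\tau \neq 0$, the vanishing order of $g - \id = \phi_{\xi_f}^\tau - \id$ at $x_0$ matches that of $\xi_f$, which in turn matches the vanishing order of $f - \id = \phi_{\xi_f}^1 - \id$; so $f$ and $g$ have the same (finite) order of contact with $\id$ at $x_0$. The case $\tau = 0$ (i.e.\ $g \equiv \id$ near $x_0$) is excluded by re-running the supremum argument of the previous paragraph with the roles of $f, g$ swapped: the maximal subinterval of $J$ containing $x_0$ on which $g \equiv \id$ would, through Takens' matching at either of its interior endpoints, extend all the way to $\partial J$, giving $g \equiv \id$ on $J$ and contradicting the hypothesis. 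The main obstacle I anticipate is the $b^*$ construction in part (A): one must establish $g(b^*) = b^*$ and that $b^*$ is isolated in $\Fix(g)$ before Takens' matching can be invoked, whereas a priori the fixed-point structure of $g$ on $(p, b^*)$ could be delicate; the symmetric edge cases (when $p = a$ or $q = b$ lies on $\partial J$, or when $y < p$ instead of $y > q$) are handled by routine adaptations.
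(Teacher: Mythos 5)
Your proof is correct and follows the route the paper intends: the corollary is stated without proof as a consequence of the Kopell--Szekeres--Takens results just recalled, and your combination of Kopell's Lemma (to create an identity zone for one diffeomorphism around a putative non-common fixed point), the elementary Rolle/flatness observation, and Takens' matching of left and right translation times at an isolated non-ITI fixed point (to propagate that identity zone across $J$ and contradict $f\res{J}\neq\id$) is exactly that derivation. The order-of-contact claim, obtained by realizing $g$ locally as a nonzero-time map of the smooth Szekeres field of the non-ITI factor, is likewise the standard argument, and the $\tau=0$ degeneracy is correctly excluded by the same propagation scheme.
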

To prove the connectedness of $\RR$, the basic idea is to show that (for a given $J$) commuting pairs such as $(f\res{J},g\res{J})$ above (i.e. without common ITI fixed point in the interior of $J$) form, together with the trivial pair $(\id_J,\id_J)$, a connected space. This is the subject of \ref{ss:classic}, \ref{ss:approx} and \ref{s:est}. Of course, it is sufficient to deal with the case $J=[0,1]$. So let us denote by $\Rss$ the  subspace of $\RR$ made of $(\id,\id)$ and of all pairs without common ITI fixed point in $(0,1)$. We call such pairs ``nondegenerate". According to \ref{c:kst} above, 
\begin{equation}\label{e:Rs}
\Rss=\RR\cap (\Ds)^2,
\end{equation}
where $\Ds\subset\DD$ consists of the diffeomorphisms which are nowhere ITI in $(0,1)$ unless they are the identity. 

As a matter of fact, we will need a little more than the connectedness of $\Rss$ to obtain that of $\RR$: in order to ``patch things up together nicely" at the end, we need to pay attention to what happens \emph{at the boundary} of the segment along our process. Therefore we introduce yet another notation: for any subset $\partial\subset\{0,1\}$, we denote by $\Dsp\subset\Ds$ the set made of the diffeomorphisms which are ITI exactly at the points of $\partial$, together with the identity, and define
$$\Rs=\Rss\cap (\Dsp)^2.$$
%

\subsubsection{Global relative translation number and Szekeres vector field for nondegenerate representations}\label{ss:sg}

Proposition  \ref{t:kst} below gives a concrete description of $\Rss$, summerized in \ref{c2:kst}. In particular, it shows that the notion of relative translation number introduced in \ref{sss:kst} extends naturally to nondegenerate pairs of commuting diffeomorphisms of $[0,1]$. 
%

This proposition mainly follows from the above-mentionned results by Kopell \cite{Ko}, Szekeres \cite{Sz} and Takens \cite{Ta}. Only point $(iii)$ is not a consequence of \ref{t:Z1} and \ref{t:Takens} above and requires some more work based on an observation by Yoccoz \cite{Yo}. 
We will only give an idea of the proof; the details can be found in \cite{Ey1} for example.

\begin{proposition}\label{t:kst} Let $(f,g)\in\Rs$ for some $\partial\subset\{0,1\}$. If $f$ and $g$ differ from the identity, 
\begin{enumerate}[label=(\roman*)]
\item $f$ and $g$ have exactly the same set $F$ of fixed points in $(0,1)$, with the same (finite) order of contact to the identity;\smallskip

\item there exists a unique $\alpha\in \R^*$ such that, on every connected component $(a,b)$ of $[0,1]\setminus F$, $g$ coincides with the time-$\alpha$ maps of $\xi_f^{[a,b)}$ and $\xi_f^{(a,b]}$. We say that $\alpha$ is the \emph{relative translation number} of $g$ with respect to $f$, and denote it by $\tau_{g/f}$; \smallskip

\item if $\alpha$ is irrational, on every connected component $(a,b)$ of $[0,1]\setminus F$, $\xi_f^{[a,b)}$ and $\xi_f^{(a,b]}$ coincide. We can then define the \emph{global} Szekeres vector field of $f$ on $[0,1]$ as the vector field $\xi_f$ vanishing on $F$ and coinciding with $\xi_f^{[a,b)}$ and $\xi_f^{(a,b]}$ on each connected component $(a,b)$ of $[0,1]\setminus F$. This vector field is $\CC^1$ on $[0,1]$, $\Cinf$ on $[0,1]\setminus \partial$ and nowhere infinitely flat there. \smallskip

\item if $\alpha = p/q$ with $(p,q)\in\Z^*\times \N^*$, $p\wedge q=1$, $f$ and $g$ are the $q$-th and $p$-th iterates of a common $\Cinf$-diffeomorphism $h$ of $[0,1]$, ITI exactly on $\partial$, which coincides on every connected component $(a,b)$ of $[0,1]\setminus F$ with the time-$1/q$ maps of $\xi_f^{[a,b)}$ and $\xi_f^{(a,b]}$ (in this case, these vector fields do not necessarily coincide).
\end{enumerate}\smallskip

If $f\neq \id$ and $g=\id$, we write $\tau_{g/f}=0$. Conversely, if $f=\id$ and $g\neq \id$, we write $\tau_{g/f}=\infty$.
\end{proposition}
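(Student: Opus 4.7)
The plan is to establish the four assertions in turn, leveraging the Kopell--Szekeres--Takens package recalled in \ref{sss:kst}.

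Point $(i)$ is essentially Corollary \ref{c:kst} applied with $J=[0,1]$: since $(f,g)\in\Rs$, there is no common ITI fixed point in $(0,1)$, so $[0,1]$ itself is the only closure of a connected component of $[0,1]\setminus(\ITI(f)\cap\ITI(g))$ to consider, and the corollary gives $\Fix(f)\cap(0,1)=\Fix(g)\cap(0,1)=:F$ with matching finite orders of tangency. For $(ii)$, fix a component $(a,b)$ of $[0,1]\setminus F$. On $[a,b)$, $f$ has no interior fixed point, so by \ref{t:Z1} its Szekeres vector field $\xi_f^{[a,b)}$ is defined and $g\res{[a,b)}$ is the time-$\alpha$ map of $\xi_f^{[a,b)}$ for a unique $\alpha\in\R$; similarly from the right. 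That these exponents coincide on both sides of every interior fixed point $x_0\in F$ (and therefore are equal across all components of $[0,1]\setminus F$, yielding a single $\alpha=\tau_{g/f}$) follows from Takens's theorem \ref{t:Takens}: since neither $f$ nor $g$ is ITI at $x_0$ by $(i)$, the left and right Szekeres vector fields of $f$ glue smoothly there, and any $\CC^\infty$ commuting diffeomorphism has the same translation number on both sides.

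Point $(iii)$ is the most subtle step and is where Yoccoz's observation enters. Fix a component $(a,b)$ of $[0,1]\setminus F$ and let $\varphi^t$ and $\psi^t$ denote the flows of $\xi_f^{[a,b)}$ and $\xi_f^{(a,b]}$ respectively. For every $c\in(a,b)$ and every $(k,l)\in\Z^2$,
\[
\varphi^{k+l\alpha}(c)=f^k\circ g^l(c)=\psi^{k+l\alpha}(c).
\]
When $\alpha$ is irrational the set $\{k+l\alpha:(k,l)\in\Z^2\}$ is dense in $\R$, so by continuity of the flows $\varphi^t(c)=\psi^t(c)$ for every $t\in\R$ and every $c\in(a,b)$; differentiating at $t=0$ gives $\xi_f^{[a,b)}=\xi_f^{(a,b]}$ on $(a,b)$. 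This equality justifies defining a global field $\xi_f$ on $[0,1]$: its $\CC^1$ regularity on $[0,1]$ follows from the $\CC^1$ extensions of each Szekeres field to the endpoints, its smoothness on $[0,1]\setminus\partial$ from Takens's smooth gluing at interior fixed points together with Szekeres on the open components, and its non-flatness off $\partial$ from the assumption that $f$ is nowhere ITI in $(0,1)\setminus\partial$ combined with $f=\varphi^{1}_{\xi_f}$.

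For $(iv)$, write $up+vq=1$ for some $(u,v)\in\Z^2$, so that $u\alpha+v=1/q$. On each component $(a,b)$ of $[0,1]\setminus F$ the time-$1/q$ maps of $\xi_f^{[a,b)}$ and $\xi_f^{(a,b]}$ both coincide with the restriction of $h:=f^v\circ g^u$, which is therefore $\CC^\infty$ on every component of $[0,1]\setminus F$ and $\CC^\infty$ at every interior fixed point by Takens's smooth gluing (the two Szekeres fields agreeing smoothly at each point of $F$ by \ref{t:Takens}). The relations $h^q=f$, $h^p=g$ and $\ITI(h)=\partial$ follow directly (any ITI point of $h$ is ITI for $h^q=f$, hence lies in $\partial$; conversely $f$ and $g$ are ITI on $\partial$, so is $h$). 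The main obstacle is the argument in $(iii)$: turning orbit coincidence under the countable $\Z^2$-action into coincidence of the generating vector fields relies crucially on the $\CC^1$-rigidity of the Szekeres centralizer (\ref{t:Z1}), and this is precisely what forces the dichotomy irrational/rational between $(iii)$ and $(iv)$.
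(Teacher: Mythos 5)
Your outline for $(i)$, for the field-equality step in $(iii)$, and for the construction $h=f^v\circ g^u$ in $(iv)$ matches the paper's. But there is a genuine gap in $(ii)$, and it propagates. Statement $(ii)$ asserts that on a \emph{single} component $(a,b)$, $g$ is the time-$\alpha$ map of \emph{both} $\xi_f^{[a,b)}$ and $\xi_f^{(a,b]}$ with the \emph{same} $\alpha$ --- and these two fields are in general different (that is the whole point of the caveat in $(iv)$). Your argument only invokes Takens at interior fixed points, which relates $\xi_f^{(a,b]}$ to $\xi_f^{[b,c)}$ for \emph{adjacent} components sharing a non-ITI fixed point $b$; it says nothing about the two one-sided fields of one and the same component, and it is unavailable when the relevant endpoint lies in $\partial$ (where $f$ \emph{is} ITI) --- e.g.\ when $F=\varnothing$ and $(a,b)=(0,1)$, your proof of $(ii)$ gives nothing at all. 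The missing ingredient is the paper's ``relative combinatorics of the orbits'': if $f>\id$ on $(a,b)$ and $g$ is the time-$\alpha$ map of either field, then $g^n(x)\in[f^{\lfloor n\alpha\rfloor}(x),f^{\lfloor n\alpha\rfloor+1}(x))$, so $\lfloor n\alpha\rfloor$ is read off from the mutual position of the $f$- and $g$-orbits and is therefore the same for both fields, forcing the two times to coincide. Without this, your key identity in $(iii)$, $\varphi^{k+l\alpha}(c)=f^k\circ g^l(c)=\psi^{k+l\alpha}(c)$, is unjustified (it uses the same $\alpha$ for $\varphi$ and $\psi$), and likewise the claim in $(iv)$ that $h$ is the time-$1/q$ map of both fields.

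A second, lesser gap concerns the $\CC^1$ regularity of $\xi_f$ at a point of $\partial$. You announce that Yoccoz's observation ``enters'' but then deduce $\CC^1$-ness on $[0,1]$ merely from the $\CC^1$ extension of each Szekeres field to the endpoints of its own component. When $F$ accumulates at $0$ (which is exactly the case forcing $0\in\partial$), being $\CC^1$ on each closed component does not yield $\CC^1$-ness at the accumulation point: one must control $D\xi_f$ uniformly near $0$. This is where the Yoccoz estimate is actually used in the paper: $f$ is ITI at $0$, hence $\CC^2$-close to $\id$ on a small neighbourhood, and the continuous dependence of the Szekeres field on its time-one map then makes $\xi_f$ $\CC^1$-small there, giving $D\xi_f(x)\to 0=D\xi_f(0)$.
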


Let us summerize this as follows:

\begin{corollary}\label{c2:kst} Let $(f,g)\in\Rss$. Then $f$ and $g$ belong either to a common infinite cyclic group generated by a $\Cinf$ diffeomorphism of $[0,1]$, or to a common $\CC^1$ flow ($\Cinf$ on $(0,1)$).
\end{corollary}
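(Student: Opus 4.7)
The plan is to deduce Corollary \ref{c2:kst} directly from Proposition \ref{t:kst} by a short case analysis, after handling separately the trivial situations where $f$ or $g$ equals the identity.

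First I would dispose of the degenerate cases. If $f=g=\id$, the pair lies in the cyclic group generated by any $\Cinf$ diffeomorphism of $[0,1]$; if exactly one of the two is trivial, say $g=\id$ and $f\neq\id$, then both belong to the infinite cyclic group $\{f^n:n\in\Z\}$ since $\id=f^0$. In both situations the conclusion holds.

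Next, suppose both $f$ and $g$ differ from the identity. By \eqref{e:Rs} we have $(f,g)\in(\Ds)^2$, so $\ITI(f)$ and $\ITI(g)$ are subsets of $\{0,1\}$. To apply Proposition \ref{t:kst} I must first verify that these two sets coincide, i.e., that $(f,g)\in\Rs$ for a common $\partial\subset\{0,1\}$. If on the contrary $f$ were not ITI at some endpoint $p\in\{0,1\}$ while $g$ were, then $p$ would not be an accumulation point of $\Fix(f)$ (a quick Taylor expansion shows that such an accumulation would already force $f$ itself to be ITI at $p$), so $f$ would admit a well-defined fixed-point-free component $(p,c)$ (or $(c,p)$) abutting $p$. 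Takens' theorem (\ref{t:Takens}) would then yield a Szekeres vector field $\xi_f$ smooth up to $p$ on that component, and since $g$ commutes with $f$ on it, Corollary \ref{t:Z1} would force $g$ to coincide there with some time map of $\xi_f$, contradicting the assumption that $g$ is ITI at $p$. Hence $\ITI(f)=\ITI(g)=:\partial$ and $(f,g)\in\Rs$, so Proposition \ref{t:kst} applies.

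Finally I would split on whether the relative translation number $\tau_{g/f}\in\R^*$ is rational or irrational. If $\tau_{g/f}=p/q\in\Q$ with $p\wedge q=1$, part (iv) of Proposition \ref{t:kst} immediately produces $h\in\DD$ with $f=h^q$ and $g=h^p$, placing $(f,g)$ in the common infinite cyclic group generated by $h$. Otherwise, part (iii) furnishes the global Szekeres vector field $\xi_f$, which is $\CC^1$ on $[0,1]$ and $\Cinf$ on $[0,1]\setminus\partial\supset(0,1)$; by parts (ii) and (iii), $f$ and $g$ are the time-$1$ and time-$\tau_{g/f}$ maps of $\xi_f$, so they belong to its common $\CC^1$ flow, which is smooth on $(0,1)$. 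The main (essentially only) nonbookkeeping point is the boundary ITI-matching step, which I expect to be the mild obstacle but which reduces to the direct application of Takens' theorem sketched above.
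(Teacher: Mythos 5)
Your proof is correct and follows essentially the same route as the paper, which presents Corollary \ref{c2:kst} as an immediate summary of Proposition \ref{t:kst} (rational $\tau_{g/f}$ giving case (iv), irrational giving cases (ii)--(iii), plus the trivial cases $f=\id$ or $g=\id$). Your additional verification that $\ITI(f)$ and $\ITI(g)$ agree on $\{0,1\}$ — so that $(f,g)$ really lies in some $\Rs$ and the proposition applies — addresses a point the paper leaves implicit, and your Kopell/Takens argument for it is sound.
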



\begin{remark} \label{r:equ} \begin{enumerate}[label=(\emph{\roman*})]
\item Of course, given $(f,g)\in\Rss\setminus\{(\id,\id)\}$, one can define $\tau_{f/g}$ in the same way and check that $\tau_{f/g}=(\tau_{g/f})^{-1}$. Indeed, in the situation of $(i)$ in \ref{t:kst}, by uniqueness of the (standard) Szekeres vector field on a semi-open interval (cf.~\ref{t:Z1}), $\xi_g^{[a,b)}$ and $\xi_g^{(a,b]}$ are none but $\tau_{g/f}\xi_f^{[a,b)}$ and $\tau_{g/f}\xi_f^{(a,b]}$ , and $f\res{(a,b)}$ coincides with the time-$(\tau_{g/f})^{-1}$ maps of both. 
\item In \ref{t:kst}$(iii)$,  even though the time-$t$ maps of $\xi_f$ are $\Cinf$ on all of $[0,1]$ for all $t$ in some dense subset $\Z+\alpha \Z$ of $\R$, the vector field $\xi_f$ is no more than $\CC^1$ at $\partial$ in general (cf. \cite{Ey3}).
\end{enumerate}
\end{remark}

\begin{proof}[Idea of the proof of \ref{t:kst}]
$(i)$ comes from corollary \ref{c:kst}. \medskip

In $(ii)$, the fact that $g$ belongs to the flow of both Szekeres vector fields of $f$ between two consecutive fixed points comes from Corollary \ref{t:Z1}. That the corresponding time is the same for the left and right vector fields is a matter of relative combinatorics of the orbits of $f$ and $g$. Finally, that the time does not depend on the connected component of $[0,1]\setminus \Fix(f)$ comes from Takens's Theorem \ref{t:Takens}, since $f$ is nowhere ITI in $(0,1)$.\medskip

In $(iii)$, the equality between the left and right Szekeres vector fields is elementary: their time-$1$ and $\alpha$ maps coincide, so the same holds for their time-$(p+\alpha q)$ maps for all $p,q\in\Z$, and extends to all $t\in\R$ by continuity of the flow and density of $\Z+\alpha\Z$ in $\R$, $\alpha$ being irrational. The $\Cinf$ regularity of $\xi_f$ on $(0,1)$ then comes from Takens \ref{t:Takens}. The $\CC^1$ regularity at $0$ and $1$ is a consequence of an observation by Yoccoz \cite{Yo} concerning the continuous dependance of the Szekeres vector field (in $\CC^1$ topology) with respect to its time-$1$ map (in $\CC^2$ topology): if $0$ (resp. $1$) is an isolated fixed point, $\CC^1$ regularity of $\xi_f$ at $0$ simply comes from Szekeres' Theorem \ref{t:Szekeres}; otherwise, $f$ must be ITI there, so $\CC^2$-close to $\id$ on some small enough neighbourhood, which, according to Yoccoz, results in the $\CC^1$-smallness of $\xi_f$ there (see \ref{p:estb}, case $n=0$, for a precise statement and its proof). \medskip

As for $(iv)$, simply take $h=f^r\circ g^s$, where $(r,s)\in\Z^2$ satisfy $rq+sp=1$. 
\end{proof}

\subsubsection{Clean representations}
%
We want to prove the connectedness of $\Rs$, for any $\partial\subset\{0,1\}$. The idea is to find a subset of it which is both dense and path-connected. Given Corollary \ref{c2:kst}, $\RCp$ below is a natural candidate:

\begin{definition}\label{d:cleans} A representation $(f,g)\in\Rss$ is said to be \emph{clean} if $f$ and $g$ belong either to a common $\Cinf$ flow on $[0,1]$ or to a common infinite cyclic group generated by a $\Cinf$-diffeomorphism of $[0,1]$.
We denote by $\RCs\subset\Rss$ the subspace of clean representations and by $\RCp$ the subspace $\RCs\cap\Rs$ of $\Rs$.
\end{definition}
%

What we want to prove now is:

\begin{proposition}\label{t:tp} For any $\partial\subset\{0,1\}$,  $\RCp$ is path-connected and dense in $\Rs$. As a consequence, $\RCs$ is path-connected and dense in $\Rss$.
\end{proposition}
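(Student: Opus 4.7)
The plan is to treat path-connectedness and density of $\RCp$ separately, exploiting the classification provided by Proposition \ref{t:kst}, before deducing the corresponding statement for $\RCs$ in $\Rss$.

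\emph{Path-connectedness of $\RCp$.} Fix $(f,g)\in\RCp$. By the definition of cleanness, either $(f,g)=(h^p,h^q)$ for some $h\in\Dsp$ and $(p,q)\in\Z^2$, or $(f,g)=(\varphi^\alpha,\varphi^\beta)$ for some $\Cinf$ flow $(\varphi^t)$ on $[0,1]$ generated by a smooth vector field flat exactly on $\partial$. In the first case, the straight-line isotopy $h_t:=(1-t)\id+th$ consists of smooth orientation-preserving diffeomorphisms of $[0,1]$ (note $Dh_t=1-t+tDh>0$) satisfying $h_t-\id=t(h-\id)$, hence $h_t\in\Dsp$ for every $t\in[0,1]$, and $t\mapsto(h_t^p,h_t^q)$ is a path in $\RCp$ from $(\id,\id)$ to $(f,g)$. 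In the second case, $t\mapsto(\varphi^{t\alpha},\varphi^{t\beta})$ clearly does the job.

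\emph{Density of $\RCp$ in $\Rs$.} Let $(f,g)\in\Rs\setminus\{(\id,\id)\}$ and set $\alpha:=\tau_{g/f}\in\R\cup\{\infty\}$. If $\alpha\in\Q\cup\{\infty\}$, then Proposition \ref{t:kst}$(iv)$ (or the trivial cases $f=\id$ or $g=\id$) gives $(f,g)\in\RCp$ already, so there is nothing to do. If $\alpha$ is irrational, the global Szekeres vector field $\xi_f$ provided by Proposition \ref{t:kst}$(iii)$ is $\Cinf$ on $[0,1]\setminus\partial$ and merely $\CC^1$ at points of $\partial$, with time-$1$ and time-$\alpha$ maps $f$ and $g$ respectively. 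As sketched in the introduction, the strategy is to replace $\xi_f$ on arbitrarily small neighborhoods of the points of $\partial$ by a smooth vector field $\tilde\xi$ that is infinitely flat on $\partial$ and whose time-$1$ and time-$\alpha$ maps $\Cinf$-approximate $f$ and $g$; the resulting pair will then lie in $\RCp$ and be $\Cinf$-close to $(f,g)$. The execution of this replacement will crucially rely on the Sergeraert-type estimates to be established in subsection \ref{ss:approx} and section \ref{s:est}: however pathological $\xi_f$ is globally near $\partial$, arbitrarily small neighborhoods of $\partial$ contain whole intervals on which $\xi_f$ and all its derivatives are uniformly small, thus providing ``corridors'' along which a smooth interpolation to the zero vector field can be inserted in a $\Cinf$-small fashion without disturbing $\xi_f$ elsewhere.

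The consequence for $\RCs$ in $\Rss$ then follows from the decomposition $\Rss=\bigcup_{\partial\subset\{0,1\}}\Rs$ --- which holds because, by Proposition \ref{t:kst} combined with Takens's Theorem \ref{t:Takens}, a nondegenerate commuting pair $(f,g)$ with $f,g\neq\id$ automatically satisfies $\ITI(f)\cap\{0,1\}=\ITI(g)\cap\{0,1\}$ --- together with the fact that all the $\RCp$ contain the common base point $(\id,\id)$. The main obstacle is, by a wide margin, the density claim in the irrational case: a naive mollification of $\xi_f$ near $\partial$ would only ensure $\CC^1$-closeness of the corresponding times of the new flow, not $\Cinf$-closeness, so one must extract enough hidden smoothness of $\xi_f$ from the sole knowledge that two specific times of its (a priori only $\CC^1$) flow are $\Cinf$; this is where the technical heart of the paper lies.
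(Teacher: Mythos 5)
Your proposal is correct and follows essentially the same route as the paper: path-connectedness via explicit paths $t\mapsto(h_t^p,h_t^q)$ (your affine isotopy indeed keeps $h_t$ in $\Dsp$, since $h_t-\id=t(h-\id)$) or $t\mapsto(\f^{t\alpha},\f^{t\beta})$, and density by splitting along the translation number, observing that $\Rsr\subset\RCp$ by Proposition \ref{t:kst}$(iv)$ and reducing the irrational case to the Szekeres-flow smoothing carried out in \ref{ss:approx} and \ref{s:est}. This matches the paper's proof, which likewise treats path-connectedness as immediate and defers the density of $\RCsi$ in $\Rsi$ to Proposition \ref{p:RC-Ri}.
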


Path-connectedness is immediate (see ``Idea of the proof" in the introduction). Now if we define
$$\Rsr=\{(\id,\id)\}\cup \left\{(f,g)\in \Rs\setminus\{(\id,\id)\} : \tau_{g/f}\in\Q\cup\{\infty\}\right\}$$
and 
$$\Rsi=\{(\id,\id)\}\cup \{(f,g)\in \Rs \setminus\{(\id,\id)\}: \tau_{g/f}\in\R\setminus\Q\},$$
Proposition \ref{t:kst} directly implies:

\begin{corollary} \label{c:RC-R}
\begin{itemize}
\item  $\Rv=\RCv$, that is: every $\Z^2$-action on $[0,1]$ without ITI-fixed points is either monogeneous or is embedded in a smooth flow.  
\item and $\Rsr\subset\RCp$, that is: every $\Z^2$-action on $[0,1]$ without ITI-fixed points in $(0,1)$ and with rational translation number is monogeneous. 
\end{itemize}
\end{corollary}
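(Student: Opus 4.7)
The plan is to unwind Proposition~\ref{t:kst} directly, splitting each bullet according to the nature of $\tau_{g/f}$. For the first bullet, fix $(f,g)\in\Rv$, so by definition both $f$ and $g$ are nowhere ITI on $[0,1]$; in particular $\partial=\varnothing$. Discarding the trivial case $(f,g)=(\id,\id)$, I separate two cases. If $\tau_{g/f}\in\R\setminus\Q$, item $(iii)$ of \ref{t:kst} supplies a \emph{global} Szekeres vector field $\xi_f$ which is $\Cinf$ on $[0,1]\setminus\partial=[0,1]$, and $f$ and $g$ are the time-$1$ and time-$\tau_{g/f}$ maps of its flow. Hence $(f,g)$ embeds in a smooth flow on the closed interval and is clean. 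If instead $\tau_{g/f}\in\Q\cup\{0,\infty\}$, item $(iv)$ (together with the trivial handling of the degenerate values $0$ and $\infty$) produces a $\Cinf$ diffeomorphism $h$ of $[0,1]$ ITI exactly on $\partial=\varnothing$ — hence genuinely $\Cinf$ and nowhere ITI — such that $f$ and $g$ are iterates of $h$. Thus $(f,g)$ lies in the infinite cyclic group $\langle h\rangle$. In either subcase $(f,g)\in\RCv$, proving $\Rv\subset\RCv$; the reverse inclusion $\RCv\subset\Rv$ is immediate from $\RCv=\RCs\cap\Rv$.

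For the second bullet, fix $(f,g)\in\Rsr$. If $(f,g)=(\id,\id)$ cleanness is vacuous. Otherwise $\tau_{g/f}\in\Q\cup\{\infty\}$ and I split again. For $\tau_{g/f}\in\{0,\infty\}$ one of $f,g$ is the identity and the other, call it $k$, belongs to $\Dsp$, so the pair trivially sits in the infinite cyclic group $\langle k\rangle$ generated by the $\Cinf$ diffeomorphism $k\in\Dsp$. For a finite nonzero rational $\tau_{g/f}=p/q$, item $(iv)$ of \ref{t:kst} hands us directly a $\Cinf$ diffeomorphism $h\in\Dsp$ with $f=h^q$ and $g=h^p$. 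In all cases $(f,g)\in\RCp$, giving $\Rsr\subset\RCp$.

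There is essentially no obstacle here: the corollary is a pure bookkeeping consequence of items $(iii)$ and $(iv)$ of \ref{t:kst}. The only point that deserves a moment of attention is making sure the degenerate values $\tau_{g/f}\in\{0,\infty\}$ are handled cleanly, which is immediate from the conventions fixed at the end of the statement of \ref{t:kst}, and that for the first bullet the condition $\partial=\varnothing$ is indeed what promotes the ``$\Cinf$ on $[0,1]\setminus\partial$'' regularity of \ref{t:kst}$(iii)$ to smoothness on the whole closed interval, which is precisely what is needed for the flow in which $(f,g)$ embeds to qualify as clean.
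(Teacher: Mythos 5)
Your proof is correct and takes essentially the same route as the paper, which simply states that the corollary follows directly from Proposition~\ref{t:kst} without writing out the case analysis. Your unwinding — splitting on $\tau_{g/f}$ irrational versus rational (plus the degenerate values $0$ and $\infty$), invoking \ref{t:kst}$(iii)$ with $\partial=\varnothing$ for the smooth-flow case and \ref{t:kst}$(iv)$ for the monogeneous case — is exactly the intended argument.
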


So in order to get Proposition \ref{t:tp}, the only thing left to prove is:

\begin{proposition}\label{p:RC-Ri} $\RCsi:=\RCp\cap\Rsi$ is dense in $\Rsi$ for any nonempty $\partial\subset\{0,1\}$. 
\end{proposition}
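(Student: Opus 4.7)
The plan follows the strategy sketched in the ``Idea of the proof'' of Theorem~A. Let $(f,g)\in\Rsi\setminus\{(\id,\id)\}$ and set $\alpha = \tau_{g/f}\in\R\setminus\Q$. By Proposition~\ref{t:kst}$(iii)$, there is a global Szekeres vector field $\xi := \xi_f$ on $[0,1]$ such that $f$ and $g$ are the time-$1$ and time-$\alpha$ maps of $\xi$; this vector field is $\Cinf$ on $[0,1]\setminus\partial$, nowhere infinitely flat there, and only $\CC^1$ at the points of $\partial$, where it is infinitely flat in the weak sense that $\xi(x)/|x-p|^k \to 0$ for every $k$ and every $p\in\partial$. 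The goal is to approximate $\xi$ arbitrarily well in the $\Cinf$ topology by vector fields $\tilde\xi$ that are genuinely smooth on all of $[0,1]$ and infinitely flat exactly at the points of $\partial$. The pair $(\tilde\xi^1,\tilde\xi^\alpha)$ will then be $\Cinf$-close to $(f,g)$, lie in $\Rs$ (same fixed points in $(0,1)$ and same ITI set $\partial$), be clean (both maps sit in the flow of the smooth field $\tilde\xi$), and have the same irrational translation number $\alpha$, hence belong to $\RCsi$.

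The construction of $\tilde\xi$ is local near each point of $\partial$; assume for concreteness that $0\in\partial$, and let $(0,c)$ be the connected component of $[0,1]\setminus\Fix(f)$ whose closure contains $0$. The derivatives of $\xi$ on $(0,c)$ typically blow up as $x\to 0^+$, so a naive smooth cutoff of $\xi$ near $0$ will fail to be $\Cinf$-close to $\xi$. The heart of the argument, to be established in Sections~\ref{ss:approx} and~\ref{s:est} as a refinement of Sergeraert's estimates~\cite{Se}, is the following: for every $k\in\N$ and every $\eps>0$, there exist arbitrarily small compact intervals $[a,b]\subset(0,c)$ on which $\norm{\xi}_{k,[a,b]}<\eps$. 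Granted such an interval, we replace $\xi$ on $[0,b]$ by a smooth vector field equal to $\xi$ at $b$, infinitely flat at $0$, and interpolating between these two behaviors across $[a,b]$ via a smooth bump; outside $[0,b]$ the vector field is left unchanged. The $\Cinf$-smallness of $\xi$ on $[a,b]$ translates into the $\Cinf$-smallness of $\tilde\xi-\xi$ on $[0,b]$, yielding $\norm{\tilde\xi-\xi}_k<C\eps$ for a universal constant $C$. The same procedure is repeated near $1$ if $1\in\partial$.

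The vector field $\tilde\xi$ so produced is smooth on $[0,1]$, infinitely flat exactly on $\partial$, nowhere infinitely flat in $(0,1)$, has the same fixed points in $(0,1)$ as $\xi$ with identical orders of contact to the identity, and satisfies $\norm{\tilde\xi-\xi}_k<C\eps$. Since the map sending a vector field to its time-$t$ map is $\Cinf$-continuous (on a relatively compact set of times), $(\tilde\xi^1,\tilde\xi^\alpha)$ is $\Cinf$-close to $(\xi^1,\xi^\alpha)=(f,g)$. By the properties of $\tilde\xi$, this pair lies in $\Rs$, is clean (embedded in the flow of $\tilde\xi$), and has the irrational translation number $\alpha$; hence it belongs to $\RCsi$. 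Letting $k\to\infty$ and $\eps\to 0$ gives the desired density.

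The main obstacle is the Sergeraert-type estimate on the existence of the ``nice intervals'' $[a,b]\subset(0,c)$ with $\norm{\xi}_{k,[a,b]}$ arbitrarily small. Although the derivatives of $\xi$ diverge globally as $x\to 0$, the commutativity of $f$ and $g$ together with the irrationality of $\alpha$ will force, via careful bookkeeping on their iterates and on the derivation formulas~\eqref{e:chainrule}, the existence at arbitrarily small scales of such well-controlled intervals. Establishing this fine control on the Szekeres vector field of a diffeomorphism ITI at its boundary fixed point constitutes the technical core of the article.
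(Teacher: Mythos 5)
Your overall strategy is the paper's (smooth the Szekeres field near $\partial$ using ``nice intervals'' where its derivatives are small, then take the time-$1$ and time-$\alpha$ maps of the smoothed field), but two steps of your argument do not go through as written.

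First, you assert that the interpolated field $\tilde\xi$ is ``nowhere infinitely flat in $(0,1)$'' and ``has the same fixed points in $(0,1)$ as $\xi$ with identical orders of contact''. Nothing in the bump-function interpolation guarantees this: on the interpolation region $(0,x_0)$ the new field is essentially arbitrary and may well acquire infinitely flat zeros, in which case the resulting pair fails to lie in $\Rs$ (hence not in $\RCsi$). The paper flags exactly this issue in the remark following Proposition~\ref{p:ppbis} and repairs it with an extra step you are missing: if $b<x_0$ is the largest infinitely flat zero of the smoothed field in $(0,x_0)$, one conjugates the pair by the affine map $h_b\colon[0,1]\to[b,1]$, $y\mapsto(1-b)y+b$, which excises the bad region; since $b<a$ can be taken arbitrarily small, $h_b$ is $\CC^k$-close to the identity and the conjugated pair remains close to $(f,g)$.

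Second, your closeness estimate $\norm{\tilde\xi-\xi}_k<C\eps$ on $[0,b]$ is false for $k\ge 2$: on $(0,a)$ the derivatives of $\xi$ of order $\ge 2$ are in general unbounded (that is the whole difficulty), while $\tilde\xi$ is smooth and small there, so $\tilde\xi-\xi$ cannot be $\CC^k$-small on that region, and one cannot invoke continuity of the flow map in the vector field to conclude. The correct argument, as in the paper, compares the diffeomorphisms rather than the fields: one first chooses $a$ so small that $f$ and $g$ are $\eta/4$-$\CC^k$-close to the identity on $[0,f^{\pm1}(a)]$ (using that they are ITI at $0$), then observes that the time-$t$ maps ($|t|\le1$) of the smoothed field are also $\CC^k$-close to the identity on $[0,f^{\pm1}(x_0)]$ because the field is $\eps$-$\CC^k$-small on $[0,f^{\pm2}(x_0)]$, and that the old and new maps coincide on $[f^{\pm1}(x_0),1]$; the triangle inequality through the identity then gives the desired closeness. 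This is also where the normalization $|\tau_{g/f}|<1$ and the margins $f^{\pm1},f^{\pm2}$ enter, which your write-up omits.
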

This is the subject of the whole next section.

\subsection{Density of clean representations in the irrational translation number case}\label{ss:approx}

From now on, for any $f\in\DD$, $i\in\N$ and $x\in[0,1]$, $f^{\pm i}(x)$ and $f^{\mp i}(x)$ will denote $\max\left(f^i(x),f^{-i}(x)\right)$ and $\min\left(f^i(x),f^{-i}(x)\right)$ respectively.

\subsubsection{Approximation result}\label{sss:approx}
We obtain proposition \ref{p:RC-Ri} as a consequence of the following:

\begin{proposition}\label{p:pp}
Let $\{0\}\subset\partial\subset\{0,1\}$ and $(f,g)\in\Rsi\setminus\{(\id,\id)\}$ such that $|\tau_{g/f}|<1$. For all $\eps>0$, $a\in(0,1]$  and $k\in\N$, there exists $x_0\in(0,a]$ and a vector field on $[0,1]$ coinciding with the global Szekeres vector field of $f$ on $[x_0,1]$, $\CC^\infty$ on $[0,1)$, infinitely flat at $0$,  and $\eps$-$\CC^k$-small on $[0,f^{\pm2}(x_0)]$.
\end{proposition}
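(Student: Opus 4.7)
My plan is a cut-and-paste: leave $\xi_f$ unchanged on $[x_0,1]$, zero out the vector field on a left neighbourhood of $0$, and use a smooth cut-off to interpolate in between --- provided I can locate $x_0$ inside a ``nice interval'' on which $\xi_f$ has simultaneously small derivatives up to order $k$. Since $\tau_{g/f}$ is irrational, Proposition~\ref{t:kst}$(iii)$ supplies the global Szekeres vector field $\xi_f$ of $f$, which is $\CC^1$ on $[0,1]$, $\Cinf$ on $[0,1]\setminus\partial$, nowhere infinitely flat there, and has $f$ as its time-$1$ map. The sole obstruction to simply taking the constructed vector field $\tilde\xi=\xi_f$ is that $0\in\partial$, so $\xi_f$ may fail to be $\Cinf$ at $0$. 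The hypothesis $|\tau_{g/f}|<1$ is there to ensure that $f$, rather than $g$, is the right reference flow for the forthcoming quantitative estimates.

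The single hard input, which I would defer to Section~\ref{s:est}, is a quantitative Sergeraert-type estimate of roughly the following shape: for every $\eta>0$ and $k\in\N$ there exist $y_0<x_0$ in $(0,a]$ with
\begin{equation*}
 \norm{\xi_f}_{k,\,[y_0,\,f^{\pm 2}(x_0)]} \;\le\; \eta\,(x_0-y_0)^{k}.
\end{equation*}
Morally this is forced by the combination of two facts: $\xi_f$ is nowhere infinitely flat on $(0,1)$, so its derivatives cannot be large everywhere; and the simultaneous smoothness of the time-$1$ map $f$ and the time-$\tau_{g/f}$ map $g$, with $\Z+\tau_{g/f}\Z$ dense, imposes enough rigidity to make the large- and small-derivative regions interlace on every arbitrarily small scale. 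This generalisation of Sergeraert's estimates \cite{Se} is by far the most delicate step, and the actual heart of the paper.

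Granted the estimate, I would assemble $\tilde\xi$ as follows. Pick a fixed smooth bump $\phi\colon\R\to[0,1]$ with $\phi\equiv 0$ on $(-\infty,0]$ and $\phi\equiv 1$ on $[1,+\infty)$, and set
\begin{equation*}
 \chi(x)=\phi\!\left(\frac{x-y_0}{x_0-y_0}\right),\qquad \tilde\xi(x)=\chi(x)\,\xi_f(x).
\end{equation*}
Then $\tilde\xi\equiv\xi_f$ on $[x_0,1]$, $\tilde\xi\equiv 0$ on $[0,y_0]$ (hence infinitely flat at $0$), and $\tilde\xi$ is $\Cinf$ on $[0,1)$ (zero near $0$; product of $\Cinf$ factors on $(0,1)$, and, at $x_0$, all derivatives of $\chi$ vanish so $\tilde\xi$ matches $\xi_f$ smoothly). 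The scaling of $\chi$ gives $\norm{\chi}_k\le C_k/(x_0-y_0)^k$, so the Leibniz rule combined with the estimate above yields $\norm{\tilde\xi}_{k,[y_0,x_0]}\le C'_k\,\eta$, while on $[x_0,f^{\pm 2}(x_0)]$ the identity $\tilde\xi=\xi_f$ together with the same estimate gives $\norm{\tilde\xi}_{k,[x_0,f^{\pm 2}(x_0)]}\le\eta$. Choosing $\eta$ small enough in terms of $\eps$ and the absolute Leibniz constants delivers the required $\eps$-$\CC^k$-smallness on all of $[0,f^{\pm 2}(x_0)]$.

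To reiterate: the combinatorial/cut-off part of the argument is routine, and the entire difficulty lies in producing the nice interval of the second paragraph. The nowhere-flatness of $\xi_f$ alone gives only a pointwise or one-derivative-at-a-time statement, whereas here one needs joint smallness of \emph{all} derivatives up to order $k$, \emph{uniformly} on an interval of controlled length, quantitatively strong enough to beat the $1/(x_0-y_0)^{k}$ cost of the cut-off. This is precisely where the simultaneous smoothness of two commuting maps with irrational translation ratio will play its decisive role.
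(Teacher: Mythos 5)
Your overall architecture (keep $\xi_f$ on $[x_0,1]$, kill it near $0$, interpolate on a region where a Sergeraert-type estimate controls the derivatives) matches the paper's, but the precise estimate you defer to Section~\ref{s:est} is not the one the paper proves, and it is too strong to hold. The actual estimate (Proposition~\ref{p:est}) is
$\norm{\xi}_{k,[f^{\mp2}(x_0),f^{\pm 2}(x_0)]}\le |f(x_0)-x_0|^{1-\delta}$
for suitable $x_0$ arbitrarily close to $0$: the ``nice interval'' has length of order $|f(x_0)-x_0|$ (by Lemma~\ref{l:dist}), i.e.\ a bounded number of fundamental domains of $f$. Your postulated bound $\norm{\xi_f}_{k,[y_0,f^{\pm2}(x_0)]}\le\eta\,(x_0-y_0)^k$ cannot be arranged: since $\xi_f\sim f-\id$ (Lemma~\ref{t:szek}), already the $0$-th derivative forces $|f(x_0)-x_0|\le\eta\,(x_0-y_0)^k$, so $[y_0,x_0]$ must contain an enormous number of fundamental domains of $f$; but the whole point of the construction (and the reason the paper only gets density, not paths) is that between the short nice intervals lie ``nasty'' ones where the derivatives of $\xi_f$ blow up, so no $\CC^k$ control is available on such long stretches. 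Conversely, if you keep $[y_0,x_0]$ inside a nice interval, the multiplicative cut-off costs $|f(x_0)-x_0|^{-k}$ against an available bound of only $|f(x_0)-x_0|^{1-\delta}$, and the product diverges for every $k\ge 1$. So the Leibniz-rule step of your third paragraph genuinely fails.

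The missing idea is the paper's different interpolation (Step 1 of Section~\ref{ss:recol}): instead of multiplying $\xi_f$ by a cut-off supported in the nice interval, one stretches $D^k\xi\res{[f^{\mp1}(x_0),x_0]}$ over all of $[0,x_0]$ by a diffeomorphism $\psi$, integrates $k$ times from $x_0$ (the antiderivatives stay bounded by $O(|f(x_0)-x_0|^{1-\delta})$ because the integration range has length $\le 1$), and only then applies a cut-off $\rho(x/x_0)$ \emph{at scale $x_0$}. The cut-off cost is then $x_0^{-k}$ rather than $|f(x_0)-x_0|^{-k}$, and $|f(x_0)-x_0|^{1-\delta}x_0^{-k}\to 0$ precisely because $f$ is ITI at $0$. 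A secondary point: the irrationality of $\tau_{g/f}$ and the presence of $g$ are used only to guarantee that the global Szekeres field $\xi_f$ exists on $[0,1)$; the quantitative estimates themselves involve $f$ alone, not any rigidity coming from the density of $\Z+\tau_{g/f}\Z$.
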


Of course, there is a similar statement for the case $\{1\}\subset\partial\subset \{0,1\}$. 

In Proposition~\ref{p:pp}, we use $(f,g)\in\Rsi\setminus\{(\id,\id)\}$ only for ensuring the existence of a Szekeres flow for $f$ defined on $[0,1)$, even if $f$ admits infinitely many non-flat fixed points tending to $0$. Then Proposition~\ref{p:pp} performs a perturbation  smoothing the Szekeres flow at $0$. We think that such a statement may be useful even in other settings than $\Z^2$-actions.  Therefore, let us reformulate Proposition~\ref{p:pp} as follows: 

\begin{proposition}\label{p:ppbis}{\bf (Smoothing the Szekeres flow)}
Let $f$ be a smooth diffeomorphism of $[0,1)$, without $\infty$-flat fixed points in $(0,1)$ and such that $f$ is the time one map of the flow of a   $\CC^1$ vector field $\xi$ on $[0,1)$ (notice that $\xi$ is necessarily smooth on $(0,1)$). 

Then, for all $\eps>0$, $a\in(0,1]$  and $k\in\N$, there exists $x_0\in(0,a]$ and a vector field on $[0,1)$ coinciding with $\xi$ on $[x_0,1]$, $\CC^\infty$ on $[0,1)$, infinitely flat at $0$,  and $\eps$-$\CC^k$-small on $[0,f^{\pm2}(x_0)]$.
\end{proposition}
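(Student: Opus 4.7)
My plan is to leave $\xi$ unchanged on $[x_0,1)$ for a carefully chosen $x_0\in(0,a]$, to replace $\xi$ by $0$ on a smaller interval $[0,y_0]$ with $0<y_0<x_0$, and to interpolate smoothly on the transition interval $[y_0,x_0]$ by multiplying $\xi$ with a cutoff function. The resulting field $\tilde\xi$ is then automatically $\CC^\infty$ on $[0,1)$, infinitely flat at $0$ (being identically zero on $[0,y_0]$), and equal to $\xi$ on $[x_0,1)$. The only nontrivial point requiring work is the $\CC^k$-smallness on $[0,f^{\pm2}(x_0)]$.

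\textbf{Key estimate (the main obstacle).} The technical heart of the proof is to show that, despite the possible failure of $\xi$ to be $\CC^\infty$ at $0$, one can find ``good windows'' arbitrarily close to $0$ in which $\xi$ and all its derivatives up to a prescribed order are as small as desired, in a way precisely tuned to absorb the steepness of a cutoff defined on a window of the same size. The estimate I shall need is: for every $\delta>0$ and $k\in\N$, there exist $x_0\in(0,a]$ and $y_0\in(0,x_0)$ with
\begin{equation*}
\norm{\xi}_{i,[y_0,f^{\pm2}(x_0)]}\;\le\;\delta\cdot(x_0-y_0)^{k-i} \qquad\text{for all } 0\le i\le k.
\end{equation*}
Such Sergeraert-type estimates exploit the $\CC^\infty$-smoothness of the time-one map $f$ together with the conjugation relation $\xi\circ f = Df\cdot\xi$ (and its iterates via the chain rule): the very reason $\xi$ is bad at $0$, namely the presence of fixed points of $f$ accumulating at $0$ and forcing $f$ to be extremely close to the identity near them, is also what makes $\xi$ extremely flat on suitable windows lying between those fixed points. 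I expect the proof of these estimates to be the bulk of the work.

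\textbf{Construction of $\tilde\xi$.} Fix once and for all a smooth nondecreasing $\chi\from\R\to[0,1]$ with $\chi\equiv 0$ on $(-\infty,0]$ and $\chi\equiv 1$ on $[1,+\infty)$, and set $C_j:=\norm{\chi}_j$. Given $\epsilon>0$, $a\in(0,1]$ and $k\in\N$, choose $\delta:=\epsilon\big/\bigl(2^k\max_{0\le j\le k}C_j\bigr)$ and apply the key estimate with this $\delta$ to produce $x_0\in(0,a]$ and $y_0\in(0,x_0)$. Define
\begin{equation*}
\tilde\xi(t)\;:=\;\chi\!\left(\frac{t-y_0}{x_0-y_0}\right)\cdot\xi(t),\qquad t\in[0,1),
\end{equation*}
understood to vanish identically on $[0,y_0]$ and to equal $\xi$ on $[x_0,1)$. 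Then $\tilde\xi$ is $\CC^\infty$ on $[0,1)$ (since $\xi$ is smooth on $(0,1)$ and the cutoff factor vanishes near $0$), infinitely flat at $0$, and equal to $\xi$ on $[x_0,1)$.

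\textbf{Verification of smallness.} On $[0,y_0]$, $\tilde\xi\equiv 0$. On $[x_0,f^{\pm2}(x_0)]$, $\tilde\xi=\xi$ and the key estimate with $i=k$ (together with $x_0-y_0\le 1$) yields $\norm{\tilde\xi}_{k}\le\delta$. On the transition interval $[y_0,x_0]$, the cutoff factor has $j$-th derivative bounded by $C_j(x_0-y_0)^{-j}$, so for each $0\le i\le k$, Leibniz's formula combined with the key estimate (applied with index $i-j$) gives
\begin{equation*}
\sup_{[y_0,x_0]}|D^i\tilde\xi|\;\le\;\sum_{j=0}^i\binom{i}{j}C_j(x_0-y_0)^{-j}\cdot\delta\,(x_0-y_0)^{k-(i-j)}\;=\;\delta\,(x_0-y_0)^{k-i}\sum_{j=0}^i\binom{i}{j}C_j\;\le\;2^k\max_jC_j\cdot\delta\;=\;\epsilon.
\end{equation*}
Combining the three regions gives $\norm{\tilde\xi}_{k,[0,f^{\pm2}(x_0)]}\le\epsilon$, completing the proof modulo the key estimate.
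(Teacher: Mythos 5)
Your overall strategy --- keep $\xi$ untouched on $[x_0,1)$, kill it near $0$, and interpolate on a transition window --- is indeed the paper's strategy, and your reduction of everything to the ``key estimate'' is arithmetically correct. But that key estimate is a genuine gap, and not a routine one: it is strictly stronger than what the Sergeraert-type analysis yields, and for $k\ge 2$ it cannot hold on windows of the size that analysis produces. The paper's Proposition \ref{p:est} provides ``good windows'' of size comparable to one fundamental domain of the dynamics, namely $[f^{\mp2}(x_0),f^{\pm2}(x_0)]$, of length $\approx 4\,|f(x_0)-x_0|$, on which all derivatives of $\xi$ up to order $k$ are bounded by the \emph{same} quantity $|f(x_0)-x_0|^{1-\delta}$: there is no gain of a factor $(x_0-y_0)$ per derivative. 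Your estimate with $i=0$ forces $\sup_{[y_0,x_0]}|\xi|\le\delta\,(x_0-y_0)^k$, while $|\xi(x_0)|\sim|f(x_0)-x_0|$ by Lemma \ref{t:szek}; hence your window must have length at least of order $|f(x_0)-x_0|^{1/k}$, which (since $f$ is ITI at $0$, so that $|f(x_0)-x_0|=o(x_0^N)$ for every $N$) contains enormously many fundamental domains and, in general, many fixed points of $f$. On such a window the case $i=k$ of your estimate demands $\norm{\xi}_k\le\delta$ uniformly; but the only available control on $D^n\xi$ for $n\ge2$ is Lemma \ref{p:estb}, $\xi^{n-1}D^n\xi=O\bigl(\norm{f-\id}_{0,[0,x]}^{n-\eta}\bigr)$, which degenerates exactly where $\xi$ is small, i.e.\ near the fixed points your enlarged window must now cross. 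This is the ``nice intervals separated by nasty ones'' phenomenon the introduction warns about, and it is why a direct cutoff multiplication at the scale of the good window cannot work.

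The paper's proof of \ref{p:pp} circumvents this with two ideas absent from your proposal. First, the cutoff is performed at scale $x_0$ (the distance to the origin), not at the scale of the good window, so the loss is only $x_0^{-k}$, and this loss is absorbed by the infinite tangency of $f$ to the identity at $0$: $|f(x_0)-x_0|^{1-\delta}/x_0^{k}\to 0$. (Your verification of smallness never invokes the ITI-ness of $f$ at $0$, which is a sign that the whole difficulty has been displaced into the unproved estimate rather than resolved.) Second, to make the function being cut off have all derivatives of order $\le k$ bounded by $O(|f(x_0)-x_0|^{1-\delta})$ on the whole of $[0,x_0]$, the paper does not use $\xi$ itself there: it stretches $D^k\xi$ from the good window $[f^{\mp1}(x_0),x_0]$ over $[0,x_0]$ by a diffeomorphism and integrates $k$ times from $x_0$. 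To complete your argument you would need either to prove your much stronger key estimate (which the known estimates do not provide and which seems unlikely to hold in general) or to replace the naive cutoff by a construction of this kind.
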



\begin{remark} The method we use to prove \ref{p:pp} (cf. \ref{ss:recol}) does not necessarily produce a vector field \emph{without infinitely flat zeros in} $(0,x_0)$. This must be taken into account in the proof of \ref{p:RC-Ri} below, where we want to build a vector field nowhere infinitely flat in $(0,1)$.
\end{remark}

\begin{proof}[Proof of \ref{p:RC-Ri}]
We restrict to the case $\partial=\{0\}$, the cases $\partial=\{1\}$ and $\partial=\{0,1\}$ being similar. Let $(f,g)\in \Rsi\setminus\{(\id,\id)\}$. Permuting $f$ and $g$ if necessary, we can assume that $|\tau_{g/f}|<1$. Denote by $\xi$ the  global Szekeres vector field of $f$ on $[0,1]$ (cf. \ref{t:kst}$(iii)$). Let $\eta>0$ and $k\in\N$. We want to build a pair $(\ft,\gt)\in\RCsi$ $\eta$-$\CC^k$-close to $(f,g)$. This is done in two steps: first we construct an approximation $(\fb,\gb)$ belonging to a smooth flow, and then we modify it slightly to suppress the potential infinitely flat fixed points in $(0,1)$.

Since $f$ and $g$ are ITI at $0$, there exists $a>0$ such that
\begin{equation}\label{e:fbgb}
\norm{f-\id}_{k,[0,f^{\pm1}(a)]}\le \frac\eta4\quad\text{and}\quad \norm{g-\id}_{k,[0,f^{\pm1}(a)]}\le \frac\eta4.
\end{equation}
Now according to Proposition \ref{p:pp}, given any $\eps>0$, there exists $x_0\in(0,a]$ and a $\Cinf$ vector field $\xib$ on $[0,1]$ infinitely flat at $0$ and such that
\begin{equation*}\label{e:xib}
\xib\res{[x_0,1]}=\xi\res{[x_0,1]}\quad\text{and}\quad \norm{\xib}_{k,[0,f^{\pm2}(x_0)]}\le\eps.
\end{equation*}
As a consequence, for all $|t|\le 1$, the time-$t$ map of $\xib$ coincides with that of $\xi$ on $[f^{\pm1}(x_0),1]$ and, provided $\eps>0$ is chosen small enough, is $\frac{\eta}4$-$\CC^k$-close to $\id$ on $[0,f^{\pm1}(x_0)]$. In particular, if $\fb$ and $\gb$ denote the time-$1$ and $\tau_{g/f}$ maps of $\xib$, 
\begin{equation*}
\norm{\fb-\id}_{k,[0,f^{\pm1}(x_0)]}\le \frac\eta4,\quad\norm{\gb-\id}_{k,[0,f^{\pm1}(x_0)]}\le \frac\eta4,
\end{equation*}
\begin{equation*}
\fb\res{[f^{\pm1}(x_0),1]}=f\res{[f^{\pm1}(x_0),1]}\quad\text{and}\quad\gb\res{[f^{\pm1}(x_0),1]}=g\res{[f^{\pm1}(x_0),1]}.
\end{equation*}
So in the end, given \eqref{e:fbgb},
$$\norm{f-\fb}_{k,[0,1]}\le \frac\eta2\quad\text{and}\quad \norm{g-\gb}_{k,[0,1]}\le \frac\eta2.$$
But $(\fb,\gb)$ belongs to $\RCsi$ only if $\xib$ is nowhere infinitely flat outside of $\{0\}$, which is not guaranteed by Proposition \ref{p:pp}. Assume $\xib$ does have infinitely flat fixed points in $(0,1)$. Those necessarily belong to $(0,x_0)$ for $\xib$ coincides with $\xi$ on $[x_0,1]$ and $\xi$ is nowhere infinitely flat there by definition of $\Rs$. 

Let $b<x_0\le a$ be the biggest of them, let
\begin{align*}h_b : [0,1]&\to [b,1]\\
y\hspace{0.3cm}&\mapsto (1-b)y+b
\end{align*}
and define
$$(\ft,\gt)=(h_b^{-1}\circ\fb\circ h_b,h_b^{-1}\circ\gb\circ h_b).$$
This time, $(\ft,\gt)$ belongs to $\RCsi$ and, provided $a$ has been chosen small enough at the beginning, $h_b$ ($b<a$) is $\CC^k$-close enough to the identity for $(\ft,\gt)$ to be $\frac{\eta}2$-$\CC^k$-close to $(\fb,\gb)$, and thus $\eta$-$\CC^k$-close to $(f,g)$.
\end{proof}

The proof of Proposition \ref{p:pp} consists of two steps: first (cf. \ref{sss:control} below) obtain bounds on the derivatives of the global Szekeres vector field of $f$ in some specific disjoint regions closer and closer to $0$ (while no such bounds exist, in general, on whole neighbourhoods of $0$), and then interpolate between such regions and $0$ to replace $\xi_f$ there by some $\Cinf$ (and $\Cinf$-small) vector field (cf. \ref{ss:recol}).

\subsubsection{Local control on the derivatives of a global Szekeres vector field}\label{sss:control}

Proposition \ref{p:est} below, which is the key to Proposition \ref{p:pp} (cf \ref{ss:recol}), claims that, though the global Szekeres vector field of an element of $\Dsp$ (when it is well-defined) is only $\CC^1$  near $\partial$ in general, arbitrarily close to $\partial$, there are regions where it is ``$\Cinf$-small". This statement and its proof are widely based on the proof by Sergeraert that, \emph{under some non-oscillation condition}, a $\Cinf$ diffeomorphism of $[0,1)$ without fixed point in $(0,1)$ and ITI at $0$ has a $\Cinf$ Szekeres vector field (cf. \cite{Se} Theorem 3.1). For our purpose, however, we need to extend Sergeraert's ideas to the case of diffeomorphisms of the closed interval with possibly infinitely many fixed points. 

\begin{proposition}\label{p:est} Let $f\in\Dsp$, $\partial\subset\{0,1\}$. Assume $f$ is the time-$1$ map of a vector field $\xi$ $\CC^1$ on $[0,1]$ (and necessarily $\Cinf$ on $(0,1)$ by \ref{t:Takens}). Then for all $\delta>0$, $k\in \N$ and $i\in\partial$, there exists $x_0\neq f(x_0)$ arbitrarily close to $i$ such that 
\begin{equation*}
\norm{\xi}_{k,[f^{\mp2}(x_0),f^{\pm 2}(x_0)]}\le |f(x_0)-x_0|^{1-\delta}.
\end{equation*}
\end{proposition}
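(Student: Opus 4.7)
The proposition is a subsequential version of Sergeraert's regularity analysis \cite{Se} of the Szekeres vector field near an infinitely flat fixed point. Sergeraert proves that, under a \emph{non-oscillation} hypothesis on the derivatives of $f$, the Szekeres vector field $\xi$ is $\Cinf$ up to the ITI endpoint; we want to exhibit, without any non-oscillation hypothesis, arbitrarily small intervals on which $\xi$ is $\Cinf$-small. The plan is to follow Sergeraert's calculation of how $\|\xi\|_k$ varies across fundamental domains, and to replace his uniform bound by the extraction of a good subsequence.

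\textbf{Setup and main estimate.} We focus on $i=0$; the other case is symmetric. After a preliminary reduction (dealing with the case where $f$ has non-ITI fixed points accumulating at $0$ by applying Takens's Theorem \ref{t:Takens} separately on each gap), one may assume that $f$ has no fixed point in $(0,\varepsilon)$ and $f(x)<x$ there. Pick $x^*\in(0,\varepsilon)$, put $x_n=f^n(x^*)$, and form the fundamental domains $I_n=[x_{n+1},x_n]$ of lengths $\ell_n$. Since $f$ is ITI at $0$, $\ell_n$ decays faster than any polynomial and $f$ is arbitrarily $\CC^{k+1}$-close to $\id$ on $I_n$ for $n$ large. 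Iterating the flow invariance $\xi(f(x))=Df(x)\xi(x)$ and differentiating, with the help of the chain-rule formulae \eqref{e:chainrule}, one expresses $D^j\xi|_{I_n}$ ($j\le k$) as a polynomial expression in $D^r\xi|_{I_0}$ ($r\le j$) and $D^rf^n|_{I_0}$ ($r\le j+1$). This yields a bound of the shape
\begin{equation*}
\|\xi\|_{k,I_n}\;\le\;C\,\ell_n\,Q_n^{(k)},
\end{equation*}
where $Q_n^{(k)}$ is a Schwarzian-type cocycle built from the values of $|D^rf|$ ($r\le k+1$) at the orbit points $x^*,x_1,\dots,x_{n-1}$. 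Because the propagation of $\CC^k$-bounds from one fundamental domain to an adjacent one costs a multiplicative factor tending to $1$ as $n\to\infty$, it is enough to produce infinitely many $n$ with $Q_n^{(k)}\le\ell_n^{-\delta/2}$, and then take $x_0:=x_n$: the resulting bound is automatically propagated to the four neighboring domains $I_{n\pm 1}$, $I_{n\pm 2}$ covering $[f^{\mp2}(x_0),f^{\pm2}(x_0)]$.

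\textbf{Main obstacle.} The extraction of such a subsequence is the heart of the argument and the main place where we must go beyond Sergeraert. Without his non-oscillation hypothesis, the cocycle $Q_n^{(k)}$ can oscillate wildly. The strategy is a contradiction argument: assume $Q_n^{(k)}>\ell_n^{-\delta/2}$ for every $n\ge N$. Exploiting the recursive form of $Q_n^{(k)}$ (coming from $Lf^n=\sum_{i=0}^{n-1}Lf\circ f^i\cdot Df^i$ and its higher-order analogues), one translates this assumed lower bound into a lower bound on a cumulative quantity such as $\sum_{n\ge N}\|\xi\|_{k,I_n}\cdot\ell_n$ that must then outgrow what the finite $\CC^1$-norm of $\xi$ on $[0,1]$, combined with the super-polynomial decay of $\ell_n$ forced by ITI, can accommodate. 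Making this compensation quantitative enough to produce a contradiction at the critical scale $\ell_n^{-\delta/2}$, in the absence of any a priori control on the oscillations of $Df$, is precisely where the technical weight of the proposition lies.
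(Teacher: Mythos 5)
There is a genuine gap, in two places. First, your preliminary reduction to ``$f$ has no fixed point in $(0,\varepsilon)$'' is not legitimate: an element of $\Dsp$ with $0\in\partial$ may perfectly well have infinitely many (non-ITI) fixed points accumulating at $0$, and applying Takens separately on each gap only gives smoothness of $\xi$ on each component of $[0,1]\setminus\Fix(f)$, with no uniformity as the components shrink towards $0$. The paper insists on exactly this point: the whole of Section~\ref{s:est} exists because Sergeraert's estimates must be re-proved \emph{uniformly} over all components, not invoked gap by gap. Your fundamental-domain setup $I_n=[f^{n+1}(x^*),f^n(x^*)]$ silently assumes a single component and collapses in the general case.

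Second, and more seriously, the step you yourself identify as ``the heart of the argument'' --- producing infinitely many $n$ with $Q_n^{(k)}\le \ell_n^{-\delta/2}$ --- is not proved; you only sketch a contradiction strategy and acknowledge that making it quantitative is the difficulty. That is the entire content of the proposition, so the proposal does not contain a proof. For comparison, the paper's mechanism is quite different and avoids any such dichotomy or contradiction argument: it first establishes a \emph{pointwise} estimate valid at every $x$ near $0$, namely $\xi^{n-1}(x)D^n\xi(x)=O\bigl(\norm{f-\id}_{0,[0,x]}^{n-\eta}\bigr)$ (Lemma~\ref{p:estb}, whose proof occupies Section~\ref{s:est} and handles the infinitely-many-fixed-points case via the series expressions \eqref{e:xi}--\eqref{e:Dxi}). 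The only ``extraction'' is then elementary: choose $x_0$ at which the running supremum $\norm{f-\id}_{0,[0,x_0]}$ is attained (such points exist arbitrarily close to $0$), so that on $[f^{\mp2}(x_0),f^{\pm2}(x_0)]$ one has $\norm{f-\id}_{0,[0,x]}\le 2\,\lvert f(x_0)-x_0\rvert$ by bounded distortion (Lemma~\ref{l:dist}), while $\lvert\xi(x)\rvert$ is comparable to $\lvert f(x_0)-x_0\rvert$ by Lemma~\ref{t:szek}; dividing the pointwise estimate by $\lvert\xi(x)\rvert^{n-1}$ yields the claim. The subtlety is thus entirely absorbed into the pointwise lemma, and no selection of good scales by contradiction is ever needed. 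Your proposal inverts this: it treats the propagation estimates as routine and places all the weight on an unexecuted extraction, which is exactly backwards relative to what can actually be carried out.
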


The proof is a combination of the following lemmas (in increasing order of difficulty). While Lemma \ref{l:dist} is elementary (see proof below), we dedicate a whole section to the proofs of \ref{t:szek} and \ref{p:estb} at the end of the article (cf. \ref{s:est}). Let us stress again that such results were already known for diffeomorphisms $f$ of $[0,1)$ \emph{without fixed points} in $(0,1)$ and ITI at $0$ (see \cite{Se}2.9 and \cite{Se}3.6 for analogs of \ref{t:szek} and \ref{p:estb} respectively).
What we do in Section \ref{s:est} is check that Sergeraert's arguments also work for diffeomorphisms with (possibly) infinitely many non-ITI fixed points accumulating on $0$. Though no new idea is involved, this unfortunately requires a whole rewriting of Sergeraert's proofs, in more details. Indeed, one could be tempted to simply apply his results to $f\res{[a,b)}$ for every connected component $(a,b)$ of $[0,1]\setminus\Fix(f)$. But first of all, strictly speaking, the results we mentionned only apply to diffeomorphisms which are ITI at the boundary of the interval under consideration, which is not the case of $f\res{[a,b)}$ unless $a= 0$, and more importantly, we need to make sure that Sergeraert's estimates are \emph{uniform} in the sense that, in our setting, they can be made independent of the component of $[0,1]\setminus\Fix(f)$ we apply them to. What makes the adaptation even more tedious (though this is mainly a matter of notation) is that the expression of the Szekeres vector field of $f\res{[a,b)}$, for a given connected component $(a,b)$ of $[0,1]\setminus\Fix(f)$, depends on the sign of $f-\id$ on $(a,b)$. 

%

\begin{lemma}\label{l:dist} Let $f$ be \emph{any} $\CC^1$-diffeomorphism of $[0,1)$ satisfying $Df(0)=1$. Then
$$\sup_{y\in[x,f^{\pm2}(x)]}\left|\frac{f(y)-y}{f(x)-x}-1\right|\underset{{x\to 0}\atop{x\notin\Fix(f)}}{\to}0.$$
\end{lemma}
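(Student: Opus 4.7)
The plan is to reformulate everything in terms of the displacement function $\phi(x) = f(x) - x$. Since $f$ is $\CC^1$ on $[0,1)$ with $Df(0) = 1$, $\phi$ is $\CC^1$ with $\phi(0) = 0$ and $D\phi(0) = 0$. Thus for every $\eta > 0$ there exists $\delta > 0$ such that $|D\phi(z)| \le \eta$ on $[0,\delta]$. The quantity to estimate is $|\phi(y) - \phi(x)|/|\phi(x)|$ for $y \in [x, f^{\pm 2}(x)]$, and by the mean value theorem
$$|\phi(y) - \phi(x)| \le \eta\,|y - x| \le \eta\,|f^{\pm 2}(x) - x|$$
as soon as $f^{\pm 2}(x) \le \delta$. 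So the crux is a \emph{linear} comparison $|f^{\pm 2}(x) - x| \le C\,|\phi(x)|$ with an absolute constant $C$ (which will turn out to be close to $2$ as $\eta \to 0$).

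To establish this comparison I split into two cases according to the sign of $\phi(x)$. If $f(x) > x$, then $f^{\pm 2}(x) = f^{2}(x)$ and $f^{2}(x) - x = \phi(x) + \phi(f(x))$; the mean value bound applied on $[x,f(x)]$ gives $|\phi(f(x)) - \phi(x)| \le \eta\,|\phi(x)|$, hence $|f^{2}(x) - x| \le (2 + \eta)\,|\phi(x)|$. If instead $f(x) < x$, then $f^{\pm 2}(x) = f^{-2}(x)$ and, via the defining identity $\phi(f^{-1}(y)) = -(f^{-1}(y) - y)$, one writes
$$f^{-2}(x) - x = -\phi(f^{-1}(x)) - \phi(f^{-2}(x)).$$
Applying $|D\phi| \le \eta$ successively on $[f^{-1}(x),x]$ and on $[f^{-2}(x),f^{-1}(x)]$ and solving the resulting inequalities yields $|\phi(f^{-1}(x))| \le |\phi(x)|/(1-\eta)$ and then $|\phi(f^{-2}(x))| \le |\phi(x)|/(1-\eta)^{2}$, provided $\eta < 1$. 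Adding these gives $|f^{-2}(x) - x| \le \tfrac{2-\eta}{(1-\eta)^{2}}|\phi(x)|$. In either case one obtains $|\phi(y)/\phi(x) - 1| \le \eta\,C(\eta)$ with $C(\eta) \to 2$ as $\eta \to 0$, so the supremum tends to $0$ as $x \to 0$ with $x \notin \Fix(f)$.

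I do not expect any genuine obstacle: the entire argument collapses to two elementary mean value estimates plus a short telescoping of $\phi$ along the two-step orbit. The only point that deserves a little care is checking, in the contracting case, that the negative iterates $f^{-1}(x)$ and $f^{-2}(x)$ really lie inside $[0,\delta]$ so that the derivative bound still applies there; this is automatic, since these iterates sit at distance at most $|\phi(x)|/(1-\eta)^{i}$ from $x$ and so remain arbitrarily close to $0$ when $x$ is. The hypothesis $x \notin \Fix(f)$ enters only to guarantee that $\phi(x) \neq 0$, so that the ratio on the left-hand side makes sense in the first place.
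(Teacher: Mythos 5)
Your proof is correct and takes essentially the same route as the paper's: a mean value estimate on the displacement $\phi=f-\id$ (using $D\phi(0)=0$) times a linear comparison of $|f^{\pm2}(x)-x|$ with $|\phi(x)|$; the paper obtains that comparison from the boundedness of $Df$, $Df^{-1}$ and $Df^{-2}$ near $0$ rather than from the smallness of $D\phi$, which is only a cosmetic difference. The one point to tighten is your closing remark: that $f^{-1}(x)$ and $f^{-2}(x)$ lie in $[0,\delta]$ cannot be deduced from the distance bound $|\phi(x)|/(1-\eta)^{i}$, since that bound itself presupposes the derivative estimate on the interval containing these iterates; just invoke the continuity of $f^{\mp2}$ at $0$ (so that $f^{\pm2}(x)\to 0$ as $x\to 0$), which is what the paper does implicitly.
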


\begin{lemma}\label{t:szek} Let $f$ and $\xi$ be as in Proposition \ref{p:est}. Then
\begin{itemize} 
\item $\log\left|\frac{\xi}{f-\id}\right|$ is bounded on $[0,1]\setminus\Fix(f)$;
\item if $Df(0)-1=D^2f(0)=0$ (in particular if $0\in\partial$), 
$$\xi(x)\underset{x\to 0}{\sim}f(x)-x\underset{x\to 0}{\sim}x-f^{-1}(x).$$
\end{itemize}
\end{lemma}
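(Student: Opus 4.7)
The plan is to reduce both claims to a uniform control on the ratio $\xi(\f^t(x))/\xi(x)$ along the flow $\f^t$ of $\xi$ (well defined on $[-1,1]$ since $\xi$ is tangent to the boundary of $[0,1]$ and $\f^1=f$). Integrating the flow equation $\tfrac{d}{dt}\f^t=\xi\circ\f^t$ between $0$ and $1$ (resp.\ $-1$ and $0$) yields the fundamental identities
\[
f(x)-x=\int_0^1 \xi(\f^t(x))\,dt\quad\text{and}\quad x-f^{-1}(x)=\int_{-1}^0\xi(\f^t(x))\,dt,
\]
while differentiating $t\mapsto\xi(\f^t(x))$ along the flow and using $\xi\neq 0$ off $\Fix(f)$ gives
\[
\frac{\xi(\f^t(x))}{\xi(x)}=\exp\!\biggl(\int_0^t D\xi(\f^s(x))\,ds\biggr)\qquad\text{on }[0,1]\setminus\Fix(f).
\]
On each connected component of $[0,1]\setminus\Fix(f)$, $\xi$ has constant sign and the orbit segment $\{\f^t(x):t\in[-1,1]\}$ stays inside it, so this ratio is positive.

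For the first bullet, the $\CC^1$ regularity of $\xi$ on the compact interval $[0,1]$ provides a bound $|D\xi|\le M$, whence the inner exponential takes values in $[e^{-M},e^M]$ uniformly in $t\in[0,1]$ and $x\in[0,1]\setminus\Fix(f)$. Dividing the first displayed identity by $\xi(x)$ gives $(f(x)-x)/\xi(x)\in[e^{-M},e^M]$, which is exactly the desired boundedness of $\log\bigl|\xi/(f-\id)\bigr|$.

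For the second bullet, the extra input is the variational identity $D\xi(0)=\log Df(0)$. Since $\f^s(0)=0$ for all $s$, differentiating the flow equation in $x$ at $x=0$ yields $D\f^t(0)=\exp(tD\xi(0))$, hence $Df(0)=\exp(D\xi(0))$. The hypothesis $Df(0)=1$ thus forces $D\xi(0)=0$. Continuity of $D\xi$ at $0$ then provides, for any $\eta>0$, some $\delta>0$ with $|D\xi|\le\eta$ on $[0,\delta]$, and for $x>0$ small enough that the orbit segment $\{\f^t(x):t\in[-1,1]\}$ lies in $[0,\delta]$ (possible since $f(0)=0$), we obtain $\bigl|\int_0^t D\xi(\f^s(x))\,ds\bigr|\le\eta$ for every $t\in[-1,1]$. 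Consequently $\xi(\f^t(x))/\xi(x)\to 1$ uniformly in $t\in[-1,1]$ as $x\to 0$, and integrating over $[0,1]$ and $[-1,0]$ gives $(f(x)-x)/\xi(x)\to 1$ and $(x-f^{-1}(x))/\xi(x)\to 1$, the two stated equivalences.

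The only real subtlety is transporting information from $f$ to $\xi$ at the (possibly ITI) fixed point $0$: everything hinges on the identity $D\xi(0)=\log Df(0)$, for which the $\CC^1$ regularity of $\xi$ up to $0$ is indispensable. The extra assumption $D^2f(0)=0$ plays no role in the argument above ($Df(0)=1$ alone suffices), but it is automatic in the application $0\in\partial$ and costs nothing to include in the statement.
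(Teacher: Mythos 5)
Your proof is correct, but it follows a genuinely different route from the paper's. The paper works with the explicit Szekeres formula $\xi=\tau_a\lim_k (f^{\mp k})^*\eta_0$ on each component of $[0,1]\setminus\Fix(f)$, writes $\log\frac{\xi}{f-\id}=\log\tau_a+\sum_k\theta\circ f^{\mp k}$ with $\theta=\log\frac{\eta_1}{\eta_0}$, and bounds this telescoping series via the mean value theorem in terms of $\norm{\log Df}_{0,[0,x]}$ and $\norm{D^2f^{\mp1}/Df^{\mp1}}_{0,[0,x]}$; the equivalence near $0$ then comes from the $\CC^2$-tangency of $f$ to $\id$, which makes these quantities tend to $0$. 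You instead treat the $\CC^1$ vector field $\xi$ as a black box and use only the flow identities $f-\id=\int_0^1\xi\circ\f^t\,dt$ and $\log\bigl(\xi\circ\f^t/\xi\bigr)=\int_0^t D\xi\circ\f^s\,ds$, getting boundedness from $\norm{D\xi}_{0,[0,1]}<\infty$ and the asymptotics from $D\xi(0)=\log Df(0)=0$ together with continuity of $D\xi$ at $0$. All the ingredients you invoke (completeness and monotonicity of the flow, $\Fix(f)=\{\xi=0\}$, the variational equation for a $\CC^1$ field, the orbit segment $[f^{\mp1}(x),f^{\pm1}(x)]$ shrinking to $0$) are legitimate, so the argument stands; it is shorter, and you correctly observe that it needs only $Df(0)=1$, not $D^2f(0)=0$. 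What the paper's computation buys in exchange is a bound expressed purely in terms of data of $f$ on $[0,x]$ (rather than of $\xi$), in the spirit of Yoccoz's continuity of the Szekeres field with respect to its time-one map, and the series structure it sets up is reused verbatim in the proof of Lemma \ref{p:estb}; your argument, while sufficient for Lemma \ref{t:szek} itself, would not feed directly into that later induction.
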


\begin{lemma}\label{p:estb}Let $f$ and $\xi$ be as in Proposition \ref{p:est}, with $0\in\partial$. For all $n\in\N^*$ and all $\eta>0$, 
$$\xi^{n-1}(x)D^{n}\xi(x)\underset{\underset{x\neq0}{x\to 0}}{=}O\left( \norm{f-\id}_{0,[0,x]}^{n-\eta}\right).$$
\end{lemma}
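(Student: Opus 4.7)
The plan is to prove the estimate by induction on $n$, adapting the strategy introduced by Sergeraert in \cite{Se} (Proposition~3.6) to our setting, where $f$ may have infinitely many (non-infinitely-flat) fixed points accumulating at $0$. The main tool is the functional equation $\xi \circ f = Df \cdot \xi$, which expresses that $\xi$ is invariant under its time-$1$ flow map $f$; iterating gives $\xi \circ f^k = Df^k \cdot \xi$ for every $k \in \Z$.

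For the base case $n = 1$, I would differentiate the functional equation once and iterate $k$ times, obtaining, with the help of the chain-rule formula~\eqref{e:chainrule},
$$D\xi(f^k(x)) = D\xi(x) + \xi(x) \cdot Lf^k(x).$$
I would then choose $k = k(x)$ large enough that $f^k(x)$ lies in a fixed compact subset of $(0,1)$, so that $D\xi(f^k(x))$ is uniformly bounded. Applying Lemma~\ref{t:szek} to get $|\xi(x)| = O(\|f-\id\|_{0,[0,x]})$, and using Lemma~\ref{l:dist} together with the infinite flatness of $Lf$ at $0$ to control $|Lf^k(x)|$ (which can grow at most like $\|f-\id\|_{0,[0,x]}^{-\eta}$ for any $\eta > 0$), yields the desired estimate $D\xi(x) = O(\|f-\id\|_{0,[0,x]}^{1-\eta})$.

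For the inductive step, differentiating $\xi \circ f^k = Df^k \cdot \xi$ exactly $n$ times (via Fa\`a~di~Bruno on the left and Leibniz on the right), then multiplying through by $\xi(x)^{n-1}$ and using the key identity $\xi(x) \cdot Df^k(x) = \xi(f^k(x))$, yields a schematic identity of the form
$$\xi(x)^{n-1} D^n\xi(x) = \xi(f^k(x))^{n-1} D^n\xi(f^k(x)) + E_n(x, k),$$
where $E_n(x,k)$ is a universal polynomial in the quantities $\xi(x)^{j-1} D^j\xi(x)$ and $\xi(f^k(x))^{j-1} D^j\xi(f^k(x))$ for $j < n$, together with derivatives $D^j f^k(x)$ for $2 \le j \le n+1$. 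Applying the induction hypothesis to the $\xi(x)$-factors, bounding the $\xi(f^k(x))$-factors by uniform constants (for the chosen $k(x)$), and controlling the derivatives of $f^k$ via the chain rule and the infinite flatness of $f$ at $0$ gives $|E_n(x,k)| = O(\|f-\id\|_{0,[0,x]}^{n-\eta})$, completing the induction.

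The main obstacle, and the reason a complete rewriting of Sergeraert's proof is required, is to make all the above estimates \emph{uniform} in $x$ as $x \to 0$ despite the possible accumulation of non-ITI fixed points of $f$ at $0$. Concretely, the choice of $k(x)$ and the estimates on the orbit segment $x, f(x), \ldots, f^{k(x)}(x)$ must be controlled (using Lemma~\ref{l:dist}) so that the bounds on $Lf^k(x)$ and on the higher derivatives of $f^k(x)$ depend only on $\|f-\id\|_{0,[0,x]}$, independently of which connected component of $[0,1] \setminus \Fix(f)$ contains $x$; and one must handle symmetrically the cases $f > \id$ and $f < \id$ on this component, corresponding to iteration $k \to +\infty$ versus $k \to -\infty$, without losing any uniformity.
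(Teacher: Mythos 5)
Your overall architecture --- induction on $n$, the invariance relation $\xi\circ f=Df\cdot\xi$, the cocycle formula for $Lf^k$ from \eqref{e:chainrule}, and the quantitative use of the infinite flatness of $f$ at $0$ --- matches the paper's. But the anchoring step of your induction has a genuine gap. You propose to choose $k=k(x)$ so that $f^{k}(x)$ lands in a fixed compact subset of $(0,1)$, and then to bound $D\xi(f^k(x))$ (and, at step $n$, the quantities $\xi^{j-1}D^j\xi$ evaluated at $f^k(x)$) by a uniform constant. This fails for two independent reasons. First, such a $k$ need not exist: in the setting of Proposition \ref{p:est} the fixed points of $f$ may accumulate at $0$, and if $x$ lies in a connected component $(a,b)$ of $[0,1]\setminus\Fix(f)$ with $b$ small, the entire orbit $(f^k(x))_{k\in\Z}$ stays inside $(a,b)$ and never reaches a compact set bounded away from $0$. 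Second, even when such a $k$ exists (e.g.\ in Sergeraert's fixed-point-free case), a \emph{uniform bound} on $D\xi(f^k(x))$ only yields $D\xi(x)=O(1)$ from the identity $D\xi(x)=D\xi(f^k(x))-\xi(x)Lf^k(x)$; to obtain $D\xi(x)=O\left(\norm{f-\id}_{0,[0,x]}^{1-\eta}\right)$ the anchoring term itself must be that small, not merely bounded.

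The paper resolves both points by iterating in the opposite direction, $k\to+\infty$ with $f^{\mp k}(x)\to a$, the fixed endpoint of the component containing $x$: there $D\xi(a)=\log Df(a)$, and this boundary term (the locally constant function in \eqref{e:Dxi}) is controlled by the Hadamard-type estimate of Corollary \ref{c:had}, $\norm{\log Df}_{0,[0,x]}=O\left(\norm{f-\id}_{0,[0,x]}^{1-\eta}\right)$, which is exactly where the ITI-ness of $f$ at $0$ enters quantitatively. The correction term becomes the convergent series $\sum_i(Lf^{\mp1}\cdot\xi)\circ f^{\mp i}$, summable by a telescoping argument because $|\xi|$ is comparable to $|f-\id|$ (Lemma \ref{t:szek}). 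For $n\ge 2$ the same replacement is required: instead of a ``uniformly bounded'' term at $f^k(x)$, the paper derives (Lemma \ref{l:alg}) series expressions for $\Phi_n=(L_\xi)^{n-1}D\xi$ together with algebraic relations $\mu_n=\Phi_n-P_n(\mu_1,\dots,\mu_{n-1})$, and sums the series by the same telescoping device. Your error-term analysis $E_n(x,k)$ is in the right spirit, but without the correct treatment of the anchoring term the induction does not close.
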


\begin{proof}[Proof of \ref{p:est}]
Let $f$ and $\xi$ be as in \ref{p:est}, $\delta>0$, $k\in\N$, and assume $i=0\in\partial$, to fix ideas. According to Lemma \ref{p:estb}, there exist $C>0$ and $x_1>0$ such that, for all $x\in(0,x_1]$ and all $n\in[\![0,k]\!]$,
\begin{equation}\label{e:estb1}
|\xi^{n-1}(x)D^{n}\xi(x)|\le C\norm{f-\id}_{0,[0,x]}^{n-\frac\delta2}.
\end{equation}
 Now according to \ref{l:dist}, \ref{t:szek}, and more generally to the ITI-ness of $f$ at $0$, there exists $x_2\in(0,x_1]$ such that, for all $x\in[0,x_2]\setminus\Fix(f)$,
\begin{equation}\label{e:point1}
\left|\frac{f(x)-x}{\xi(x)}\right|\le 2,\quad \frac12\le\frac{f(y)-y}{f(x)-x}\le 2\quad\forall y\in[x,f^{\pm2}(x)],
\end{equation}
and
\begin{equation}\label{e:iti}
\norm{f-\id}_{0,[0,x]}^{\frac\delta2}\le \frac{1}{2^{3k}C}.
\end{equation}
Pick $x_0\in[0,f^{\mp2}(x_2)]$ (so that \eqref{e:estb1}, \eqref{e:point1} and \eqref{e:iti} hold for all $x\le f^{\pm 2}(x_0)\le x_2$) satisfying $|f(x_0)-x_0| = \norm{f-\id}_{0,[0,x_0]}$. In particular $f(x_0)\neq x_0$.
Then for all $x\in [f^{\mp2}(x_0),f^{\pm 2}(x_0)],$ 
\begin{equation}\label{e:xxo}
\norm{f-\id}_{0,[0,x]} \le\norm{f-\id}_{0,[x_0,f^{\pm 2}(x_0)]}
\end{equation}
so, for all $n\in[\![0,k]\!]$, 
\begin{align*}|D^n\xi(x)|&\le C\frac{\norm{f-\id}_{0,[0,x]}^{n-\frac\delta2}}{|\xi(x)|^{n-1}}\quad \text{by \eqref{e:estb1}}\\
&\le C\frac{\norm{f-\id}_{0,[x_0,f^{\pm 2}(x_0)]}^{n-\frac\delta2}}{|\xi(x)|^{n-1}}\quad \text{by \eqref{e:xxo}}\\
&\le C2^{n-\frac\delta2}\frac{\left|f(x_0)-x_0\right|^{n-\frac\delta2}}{|\xi(x)|^{n-1}}\quad\text{ according to \eqref{e:point1}}\\
&=C2^{n-\frac\delta2}\frac{\left|f(x_0)-x_0\right|^{n-\frac\delta2}}{\left|f(x_0)-x_0\right|^{n-1}}\times\frac{\left|f(x_0)-x_0\right|^{n-1}}{\left|f(x)-x\right|^{n-1}}\times \frac{\left|f(x)-x\right|^{n-1}}{|\xi(x)|^{n-1}}\\ 
&\le C 2^{n-\frac\delta2}\times \left|f(x_0)-x_0\right|^{1-\frac\delta2}\times 2^{n-1}\times 2^{n-1}\quad \text{ according to  \eqref{e:point1}}\\
&\le |f(x_0)-x_0|^{1-\delta} \text{ according to \eqref{e:iti}},\end{align*}
which concludes the proof of Proposition \ref{p:est}.
\end{proof}

\begin{proof}[Proof of Lemma \ref{l:dist}] Let $x\in(0,1)\setminus \Fix(f)$. 
Then for all $y\in[x,f^{\pm 2}(x)]$, 
\begin{align}\label{e:dist}
\left|\frac{f(y)-y}{f(x)-x}-1\right|&=\left|\frac{(f(y)-y)-(f(x)-x)}{f(x)-x}\right|\notag\\
&\le \sup_{[x,f^{\pm 2}(x)]}|D(f-\id)|\left|\frac{y-x}{f(x)-x}\right|\notag\\
&\le  \sup_{[x,f^{\pm 2}(x)]}|D(f-\id)|\left|\frac{f^{\pm 2}(x)-x}{f(x)-x}\right|.
\end{align}
If $x<f(x)$,
\begin{align*}
\left|\frac{f^{\pm 2}(x)-x}{f(x)-x}\right|=\frac{f^2(x)-f(x)}{f(x)-x}+1=Df(a_x)+1\quad\text{for some $a_x\in[x,f(x)]$}.
\end{align*}
If $f(x)<x$,
\begin{align*}
\left|\frac{f^{\pm 2}(x)-x}{f(x)-x}\right|&=\frac{f^{- 2}(x)-f^{- 1}(x)}{x-f(x)}+\frac{f^{- 1}(x)-x}{x-f(x)}\\
&=\frac{f^{- 2}(x)-f^{- 2}(f(x))}{x-f(x)}+\frac{f^{- 1}(x)-f^{-1}(f(x))}{x-f(x)}\\
&=Df^{-2}(b_x)+Df^{-1}(c_x)\quad\text{for some $b_x$ and $c_x\in[f(x),x]$}.
\end{align*}
Now $Df$, $Df^{-1}$ and $Df^{-2}$ are bounded on $[0,1]$ and since $f$ is $\CC^1$-tangent to $\id$ at $0$, 
$\sup_{[x,f^{\pm 2}(x)]}|D(f-\id)|$ goes to $0$ when $x$ goes to $0$, which concludes the proof, according to \eqref{e:dist}.
\end{proof}

\subsubsection{Interpolation (proof of Proposition \ref{p:pp} using \ref{p:est})}\label{ss:recol}
In this section, we prove that Proposition \ref{p:est} implies Proposition \ref{p:pp}. 
\medskip

Let $0\subset\partial\subset\{0,1\}$, $(f,g)\in\Rsi\setminus\{(\id,\id)\}$ with $|\tau_{g/f}|<1$ (so in particular $f\neq\id$), and let $\xi$ denote the global Szekeres vector field of $f$ on $[0,1]$. Let $\eps>0$, $k\in\N$ and $a\in(0,1]$. We first describe, given \emph{any} $x_0\in[0,a]\setminus \Fix(f)$, a specific way to construct a $\Cinf$ vector field $\xib$ on $[0,1]$ coinciding with $\xi$ on $[x_0,1]$ and infinitely flat at $0$. Then we prove that $x_0$ can be chosen so that $\xib$ is $\eps$-$\CC^k$-small on $[0,f^{\pm2}(x_0)]$.\medskip

\noindent\textbf{Step 1: construction of $\xib$ for any $x_0$.}
Let $x_0\in[0,a]\setminus \Fix(f)$.
To extend the vector field $\xi\res{[x_0,1]}$ to $[0,1]$, the idea is simply to ``stretch" $\xi\res{[f^{\mp1}(x_0),x_0]}$ into a vector field on $[0,x_0]$ and then  flatten it near $0$. 
%
More precisely, what we ``stretch" is $D^k\xi\res{[f^{\mp1}(x_0),x_0]}$, and then we integrate $k$ times to obtain our new vector field.\medskip

Let $\psi:[0,1]\to [f^{\mp1}(x_0),1]$ be a $\Cinf$ diffeomorphism coinciding with the identity on $[x_0,1]$, and $\rho : \R\to [0,1]$ a $\Cinf$ function vanishing on $\R_-$, increasing on $[0,1]$ and constant equal to $1$ on $[1,+\infty)$. We first extend $D^k\xi\res{[x_0,1]}$ into a $\Cinf$ function $\alpha_0$ on $[0,1]$ as follows:
$$\alpha_0(x) = D^k\xi\circ \psi(x)\quad \text{for all $x\in[0,1]$}.$$
Then, by induction on $1\le i\le k$, define
\begin{equation}\label{e:recalpha}\alpha_i(x) = D^{k-i}\xi(x_0) + \int_{x_0}^x \alpha_{i-1}(u)du\quad\text{for all $x\in[0,1]$}.
\end{equation}
Note that, for all $1\le i\le k$, $\alpha_i=D^{k-i}\xi$ on $[x_0,1]$.
Now define
$$\xib(x) = \rho\left(\frac{x}{x_0}\right)\alpha_k(x)\quad \text{for all $x\in[0,1]$}.$$
By construction, $\xib$ is $\Cinf$ on $[0,1]$, infinitely flat at $0$ and coincides with $\xi$ on $[x_0,1]$. \medskip

\noindent\textbf{Step 2: choice of $x_0$.}
Fix $\delta\in(0,1)$ and take $x_0$ as in Proposition \ref{p:est}, i.e such that
\begin{equation}\label{e:est}
\norm{\xi}_{k,[f^{\mp2}(x_0),f^{\pm 2}(x_0)]}\le |f(x_0)-x_0|^{1-\delta}.
\end{equation}
Remember that such an $x_0$ can be found arbitrarily close to $0$. Let us estimate the $\CC^k$-norm of $\xib$ on $[0,f^{\pm 2}(x_0)]$ for such an $x_0$. First of all, on $[x_0,f^{\pm2}(x_0)]$, $\xib$ coincides with $\xi$, so
\begin{equation}\label{e:est2}
\norm{\xib}_{k,[x_0,f^{\pm 2}(x_0)]}\le |f(x_0)-x_0|^{1-\delta}.
\end{equation}
Next, for all $x\in[0,x_0]$ and all $0\le l\le k$,
\begin{equation}\label{e:dkxib}
|D^l\xib(x) |= \left|\sum_{i=1}^l\binom{l}{i}\frac{D^{i}\rho\left(\frac{x}{x_0}\right)}{x_0^{i}}D^{k-i}\alpha_k(x)\right|
\le l!\norm{\rho}_l\sum_{i=1}^l\frac{|\alpha_i(x)|}{x_0^{i}}.
\end{equation}
Now for all such $x$, $\psi(x)$ belongs to $[f^{\mp1}(x_0),x_0]$, so given \eqref{e:est},
$$|\alpha_0(x)| = |D^k\xi\circ \psi(x)|\le|f(x_0)-x_0|^{1-\delta},$$
and by induction on $i$ between $0$ and $k$, using \eqref{e:recalpha} and \eqref{e:est},
\begin{equation}\label{e:ai}
|\alpha_i(x)| \le (i+1)|f(x_0)-x_0|^{1-\delta}.
\end{equation}
So for all $x\in [0,x_0]$, \eqref{e:dkxib} and \eqref{e:ai} give
\begin{equation*}
|D^l\xib(x) |\le  l!\norm{\rho}_l\sum_{i=1}^l(i+1)\frac{|f(x_0)-x_0|^{1-\delta}}{x_0^i},
\end{equation*}
and consequently
\begin{equation}\label{e:est3}
\norm{\xib}_{k,[0,x_0]}\le k(k+1)k!\norm{\rho}_k\frac{|f(x_0)-x_0|^{1-\delta}}{x_0^k}.
\end{equation}
Finally, $f$ is ITI at $0$ so for $x_0$ small enough, the right-hand sides of \eqref{e:est2} and \eqref{e:est3} are less than $\eps$, and thus
$$\norm{\xib}_{k,[0,f^{\pm2}(x_0)]}\le \eps,$$
which terminates the proof of Proposition \ref{p:pp}. 
\subsection{From $\Rss$ to $\RR$ (proof of Theorem A)}\label{ss:general}

Like for $\Rss$, the strategy to prove the connectedness of $\RR$ is to find a subset of it which is both dense and path-connected.
Given \ref{c:kst}, a natural candidate for a dense subset would be the set of representations $(f,g)\in\RR$ whose restriction to the closure of any connected component of $[0,1]\setminus \ITI(f)\cap\ITI(g)$ is clean (clean $\Z^2$-actions on a given segment being defined just like on $[0,1]$).
But bearing in mind that our candidate must also be path-connected, we make the additional requirement that these connected components are in finite number:
\begin{definition}\label{d:cleang} A representation $(f,g)\in\RR$ is said to be \emph{piecewise-clean} if
\begin{itemize}
\item $[0,1]\setminus \ITI(f)\cap\ITI(g)$ is a \emph{finite} union of intervals;
\item for any such interval $I$, $(f\res{\bar{I}},g\res{\bar{I}})$ is clean.
\end{itemize}
We denote by $\RC\subset \RR$ the subset of piecewise-clean representations.
\end{definition}

Theorem A is a corollary of the following, since $(\id,\id)$ belongs to $\RC$.

\begin{proposition}\label{p:RC}
$\RC$ is path-connected and dense in $\RR$.
\end{proposition}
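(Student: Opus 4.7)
The plan is to handle density and path-connectedness of $\RC$ in turn, in both cases by reducing to the closures of the connected components of $[0,1]\setminus Z$, where $Z=\ITI(f)\cap\ITI(g)$ is closed, and invoking Proposition~\ref{t:tp} on each. Writing $[0,1]\setminus Z=\bigsqcup_n I_n$ as a countable disjoint union of open intervals, the restriction of $(f,g)$ to each $\overline{I_n}$ (rescaled affinely to $[0,1]$) lies in $\Rs$ for the $\partial\subset\{0,1\}$ consisting of those endpoints of $I_n$ belonging to $Z$. The only obstruction to $(f,g)$ being itself piecewise-clean is that the family $\{I_n\}$ can be infinite; the crux of the density argument will be to reduce, via a small $\CC^k$-perturbation, to finitely many components.

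For density, fix $\eps>0$ and $k\in\N$. The key preliminary observation is that only finitely many components $I_n$ can satisfy $\max(\norm{f-\id}_{k,I_n},\norm{g-\id}_{k,I_n})\geq\eps$: any accumulation point of such ``big'' components must lie in $Z$ by disjointness of the $I_n$, and since both $f-\id$ and $g-\id$ are infinitely flat there, continuity of the first $k$ derivatives forces their $\CC^k$-norms to be arbitrarily small on nearby $I_n$, a contradiction. Enumerate the big components as $I_1,\dots,I_N$ and define $(\hat f,\hat g)$ to equal $(f,g)$ on $\bigcup_{j\leq N}\overline{I_j}$ and $(\id,\id)$ elsewhere. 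Because $f-\id$ and $g-\id$ are infinitely flat at every boundary point of any $I_n$ lying in $Z$, the pair $(\hat f,\hat g)$ is $\Cinf$; it commutes by construction and is $\eps$-$\CC^k$-close to $(f,g)$, since $\hat f-f$ vanishes to infinite order on $Z$ and has $\CC^k$-norm $<\eps$ on each discarded small component (and similarly for $\hat g-g$). Applying Proposition~\ref{t:tp} on each $\overline{I_j}$ (rescaled to $[0,1]$) with the corresponding $\partial$ yields an approximation in $\RCp$ within $\eps/N$ in $\CC^k$; because $\RCp$ preserves the ITI data prescribed by $\partial$, these local approximations assemble with $(\id,\id)$ outside $\bigcup\overline{I_j}$ into a piecewise-clean pair at $\CC^k$-distance $\leq 2\eps$ from $(f,g)$.

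For path-connectedness, any $(f,g)\in\RC$ already admits a finite decomposition $[0,1]\setminus Z=\bigsqcup_{j\leq N}I_j$ with each $(f\res{\overline{I_j}},g\res{\overline{I_j}})$ clean, hence lying in $\RCp$ for the appropriate $\partial$. Using the path-connectedness of $\RCp$ from Proposition~\ref{t:tp}, choose for each $j$ a continuous path $(f^j_t,g^j_t)_{t\in[0,1]}$ in $\RCp$ from $(f\res{\overline{I_j}},g\res{\overline{I_j}})$ to $(\id,\id)$; pasted with the constant family $(\id,\id)$ on $Z$, they yield a continuous path in $\RC$ from $(f,g)$ to $(\id,\id)$, each $(f_t,g_t)$ being $\Cinf$ on $[0,1]$ because $f^j_t-\id$ and $g^j_t-\id$ remain infinitely flat at the endpoints of $I_j$ lying in $Z$ throughout the deformation. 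The main obstacle of the whole argument is really the finiteness lemma of the density step: the decomposition $\{I_n\}$ of the complement of an arbitrary closed set can be quite intricate, and it is the interplay between the infinite flatness of $f-\id$ and $g-\id$ on $Z$ and the compactness of $[0,1]$ that ultimately forces small components to be $\CC^k$-negligible and allows the finite reduction.
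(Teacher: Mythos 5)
Your proof is correct and follows essentially the same route as the paper's: the same finiteness claim (only finitely many components where $f-\id$ or $g-\id$ has $\CC^k$-norm $\ge\eps$, proved by the same accumulation/flatness argument), replacement by the identity on the small components and on $\ITI(f)\cap\ITI(g)$, application of Proposition~\ref{t:tp} on the finitely many remaining components, and smooth gluing at the finitely many ITI gluing points, with the identical pasting argument for path-connectedness.
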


We now need to check that this proposition follows from its nondegenerate analog \ref{t:tp}, or more precisely from the (trivial) generalization of the latter to any segment $J\subset[0,1]$, Proposition \ref{p:tpg} below, whose statement requires a (trivial) generalization of our previous notations. Let 
\begin{itemize}
\item $\RJ = \{(f,g)\in(\DJ)^2 : f\circ g = g\circ f\}\subset (\DJ)^2,$ 
\item $\DJs$ be the subset of $\DJ$ made of the identity and the diffeomorphisms that are nowhere ITI in the interior of $J$,
\item $\RJs=\RJ\cap(\DJs)^2$, which is the same (cf. \eqref{e:Rs}) as $(\id,\id)$ together with the pairs of diffeomorphisms which are nowhere simultaneously ITI in the interior of $J$ (we call such pairs \emph{nondegenerate}),
\item $\RCJs\subset\RJs$ be the set of pairs which belong either to a common $\Cinf$ flow on $J$ or to a common infinite cyclic group generated by a $\Cinf$ diffeomorphism of $J$  (we call such pairs \emph{clean}),
\end{itemize}
and for any subset $\partial\subset \partial J$, let
\begin{itemize}
\item $\DJsp$ be the subset of $\DJs$ made of the identity and the diffeomorphisms that are ITI exactly at $\partial$ and nowhere else, 
\item $\RJsp=\RJs\cap(\DJsp)^2$,
\item $\RCJsp=\RCJs\cap(\DJsp)^2$.
\end{itemize}


\begin{proposition}\label{p:tpg}
For any $\partial\subset\partial J$, $\RCJsp$ is path-connected and dense in $\RJsp$. 
\end{proposition}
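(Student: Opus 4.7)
The plan is to derive Proposition \ref{p:tpg} from Proposition \ref{t:tp} by conjugation with an affine diffeomorphism. Write $J=[a,b]$ with $a<b$, and let $\varphi:[0,1]\to J$ be the orientation-preserving affine bijection $x\mapsto a+(b-a)x$. Set $\Phi(f):=\varphi\circ f\circ\varphi^{-1}$ for $f\in\DD$. Since $\varphi$ and $\varphi^{-1}$ are $\Cinf$, $\Phi$ is a homeomorphism $\DD\to\DJ$ for the $\Cinf$ topology, and since it intertwines composition, it induces (componentwise) a homeomorphism $\RR\to\RJ$, which we denote by the same symbol.

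Given $\partial\subset\partial J$, I would set $\partial_0:=\varphi^{-1}(\partial)\subset\{0,1\}$ and check that all the relevant defining conditions are invariant under $\Phi$. Because $\varphi$ is a smooth diffeomorphism, $f\in\DD$ is ITI at $x\in[0,1]$ if and only if $\Phi(f)\in\DJ$ is ITI at $\varphi(x)\in J$; thus $\Phi$ restricts to a homeomorphism between the corresponding versions of $\Dsp$ and $\DJsp$, and componentwise between $\Rs$ (with data $\partial_0$) and $\RJsp$ (with data $\partial$). Moreover, if $f$ and $g$ belong to a common $\Cinf$ flow $(\psi^t)_{t\in\R}$ on $[0,1]$, then $\Phi(f)$ and $\Phi(g)$ belong to the $\Cinf$ flow $(\varphi\circ\psi^t\circ\varphi^{-1})_{t\in\R}$ on $J$; similarly, if $f$ and $g$ are iterates of a common $\Cinf$ diffeomorphism $h$ of $[0,1]$, then $\Phi(f)$ and $\Phi(g)$ are the corresponding iterates of $\Phi(h)\in\DJ$. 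The same argument applied to $\varphi^{-1}$ yields the reverse implication, so $\Phi$ restricts further to a homeomorphism $\RCp\to\RCJsp$.

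Proposition \ref{t:tp} asserts that $\RCp$ is path-connected and dense in $\Rs$ for every $\partial_0\subset\{0,1\}$. Transporting these two topological properties through the homeomorphism $\Phi$ immediately yields Proposition \ref{p:tpg}. There is no real obstacle in this reduction: the only point to verify is that commutativity, the ITI locus, and membership in a common $\Cinf$ flow or infinite cyclic group are all invariant under smooth conjugation, which is clear from the definitions. This is precisely the ``trivial'' generalisation announced in the text.
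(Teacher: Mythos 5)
Your proposal is correct and matches what the paper intends: the paper simply declares Proposition \ref{p:tpg} a ``trivial generalization'' of Proposition \ref{t:tp} without writing out the reduction, and your affine-conjugation argument is exactly the obvious way to make that reduction precise (conjugation by the affine map is a homeomorphism for the $\Cinf$ topology and preserves commutativity, the ITI locus, and membership in a common smooth flow or cyclic group). Nothing is missing.
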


%

\begin{proof}[Proof of \ref{p:RC}] The path-connectedness of $\RC$ is an easy consequence of that of $\RCJsp$ (cf. \ref{p:tpg}) for any segment $J\subset[0,1]$ and any $\partial\subset\partial J$. Indeed, one can connect any element $(f,g)$ of $\RC$ to $(\id,\id)$ proceeding on the closure $\bar{I}$ of each connected component $I$ of $[0,1]\setminus  \ITI(f)\cap\ITI(g)$ independently and gluing everything back together smoothly since the gluing points are in finite number and at each of them, all the diffeomorphisms involved are ITI. \medskip

So let us now focus on the density of $\RC$ in $\RR$.
Let $(f,g)\in\RR$. We want to prove that for all $\eps>0$ and all $k\in\N$, there exists $(\ft,\gt)\in \RC$ such that 
$$\|f-\ft\|_k\le\eps\quad\text{and}\quad\norm{g-\gt}_k\le\eps.$$

Let $\Omega=[0,1]\setminus \ITI(f)\cap\ITI(g)$, $K$ be the union of the closures of all the connected components $I$ of $\Omega$ satisfying
$$\|f-\id\|_{k,\bar{I}}\le\eps\quad\text{and}\quad\|g-\id\|_{k,\bar{I}}\le\eps,$$
and let $\Omega'=\Omega\setminus K$.

\begin{claim} $\Omega'$ has finitely many connected components.
\end{claim}

\begin{proof}
The endpoints of the connected components of $\Omega'$ belong to $\{0,1\}\cup (\ITI(f)\cap\ITI(g))$. If they are infinitely many, they accumulate on a point, which  necessarily belongs to $\ITI(f)\cap\ITI(g)$. Then $f$ and $g$ are $\eps$-$\CC^k$-close to $\id$ on some neighbourhood of this point. But such a neighbourhood contains (infinitely many) connected components of $\Omega'=\Omega\setminus K$, which is incompatible with the definition of $K$. 
\end{proof}

We can now define $(\ft,\gt)$:

\begin{itemize}
\item on $K\cup (\ITI(f)\cap\ITI(g))$, set $\ft=\gt=\id$; 
\item by definition of $\Omega'$, for each connected component $I$ of $\Omega'$, $(f\res{\bar{I}},g\res{\bar{I}})$ belongs to $\RIbs$, with $\partial=\partial I\cap  \ITI(f)\cap\ITI(g)$. So according to \ref{p:tpg}, $(f\res{\bar{I}},g\res{\bar{I}})$ is $\eps$-$\CC^k$-close to some pair in $\RCIbs$. Define $(\ft\res{\bar{I}},\gt\res{\bar{I}})$ as such a pair.
\end{itemize}

The resulting maps $\ft$, $\gt$ are as closed as required from $f$ and $g$, and they are smooth since the diffeomorphisms we glue together to build them are ITI at the ``gluing points" ($\partial \Omega'$), which are finitely many. 

\begin{remark}\label{r:ITI}
It follows directly from this proof that if $f$ and $g$ are ITI at $0$ and $1$, the pairs $(\ft,\gt)$ approaching $(f,g)$ and the paths connecting them to $(\id,\id)$ also are. This will be useful in part \ref{s:circle} when we extend our connectedness result to actions on the circle.
\end{remark}
\end{proof}


\subsection{$\Z^n$-actions, for $n>2$}\label{ss:Zn}
Just like for $n=2$ (cf. \ref{ss:general}), the connectedness of the space $\RRn$, $n>2$, of representations of $\Z^n$ into $\DD$ follows from that of the space of \emph{nondegenerate} representations, meaning representations without common ITI fixed point in $(0,1)$.

 Given a subset $\partial$ of $\{0,1\}$, let $\RRns$ denote the space of representations whose only common ITI fixed points are the points of $\partial$. Recall that for us, a representation is nothing but a $n$-tuple $(f_1,...,f_n)$ of commuting diffeomorphisms. Like for $n=2$ (cf. \ref{t:kst} and \ref{c2:kst}), one can show that $f_1$,...,$f_n$ belong either to a common infinite cyclic group generated by a $\Cinf$ diffeomorphism of $[0,1]$ (case $\Rsr$ for $n=2$), or to a common $\CC^1$ flow on $[0,1]$ (case $\Rsi$ for $n=2$). In the first case, $(f_1,...,f_n)$ belongs to the path-connected component of the trivial representation in $\RRns$. And in the second case, one can show just like for $n=2$ (cf. \ref{p:RC-Ri}) that $(f_1,...,f_n)$ is approached by ``clean" n-tuples, that is n-tuples of diffeomorphisms which belong to a common $\Cinf$ flow on $[0,1]$. Indeed, the number of commuting diffeomorphisms at stake plays no role in the proof of \ref{p:RC-Ri}, since this proof mainly consists in studying the global Szekeres vector field of \emph{one} of the diffeomorphisms. Now clean $n$-tuples belong to the path-connected component of the trivial representation in $\RRns$, which completes the proof of the connectedness of $\RRns$.

%
%
%
%

\section{$\Z^n$ actions on the circle $\T^1$}\label{s:circle}
The aim of this section is the proof of Theorem~\ref{t:b}, that is, the connectedness of the space of $\Z^n$ orientation preserving actions on the circle $\T^1$ without free sub $\Z^2$-action; more precisely the path-connected component of the trivial action is dense in $\RSnnl$. We proceed by induction. Observe that the case $n=1$ is easy and classical. Thus we assume now that  Theorem~\ref{t:b} has been proved for $1,\dots, n-1$, and we will prove it for $n>1$.

Let $F\colon \Z^n\to \DS$ be a group morphism, and hence a $\Z^n$ action on $\T^1$. We assume that this action has no free sub $\Z^2$-action.
We need to show that $F$ admits arbitrarily $\Cinf$-small perturbations which are isotopic to the trivial action. 

\subsection{The non-injective case}
First assume that $F$ is not injective. Then the quotient $\Z^n/\ker F$ is an abelian group. As a consequence $\Z^n/\ker F$ is a direct sum (abelian product)  $G\times T$ where $T$ is a finite group and $G$ is isomorphic to $\Z^m$,  $m\in\{0,\dots,n-1\}$. 

Any finite group of $\DS$ is smoothly conjugated to a rotation group. Thus, up to a smooth conjugacy, one may assume that $F(T)$ is a finite rotation group.  The quotient of $\T^1$ by $F(T)$ is a smooth circle, we denote it by $\T_T$.  As $F$ is an abelian action, every diffeomorphism in $F(\Z^n)$ commutes with $F(T)$ and therefore passes to the quotient in a diffeomorphism of $\T_T$. 

Let $F_T\colon \Z^m\to \Diff^\infty_+(\T_T)$ be the induced $G\simeq \Z^m$ action. One easily checks the following statement:
\begin{lemma}\label{l.lift} Every perturbation of $F_T$ can be lifted in a perturbation of $F|_G$ which commutes with $F(T)$, and hence defines a perturbation of $F$. 
\end{lemma}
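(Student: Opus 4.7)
The plan is to exploit the fact that the quotient map $\pi\colon \T^1\to \T_T$ is a regular finite covering of degree $|T|$ whose deck group is $F(T)$, and in particular a local $\CC^\infty$-diffeomorphism. Fix generators $e_1,\dots,e_m$ of $G\simeq \Z^m$ and set $g_i = F(e_i)\in\DS$ and $\bar g_i = F_T(e_i)\in\Diff^\infty_+(\T_T)$, so that $\pi\circ g_i=\bar g_i\circ \pi$. A perturbation of $F_T$ amounts to a commuting $m$-tuple $(\tilde{\bar g}_1,\dots,\tilde{\bar g}_m)$ of $\Cinf$ diffeomorphisms of $\T_T$ close to $(\bar g_1,\dots,\bar g_m)$; I want to lift it to a commuting $m$-tuple $(\tilde g_1,\dots,\tilde g_m)\in (\DS)^m$ close to $(g_1,\dots,g_m)$ with each $\tilde g_i$ commuting with $F(T)$. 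The first observation is that a diffeomorphism of $\T^1$ factors through $\pi$ if and only if it commutes with $F(T)$ (using orientation-preservation and connectedness of $\DS$), and that every diffeomorphism of $\T_T$ admits exactly $|T|$ such lifts, obtained from one another by postcomposition with the elements of $F(T)$. Since $F(T)$ is a finite group of rotations, these $|T|$ lifts are pairwise separated by a uniform positive $\CC^0$-distance (bounded below by the minimal nonzero rotation angle in $F(T)$). Hence, once the perturbation is $\CC^0$-small enough, each $\tilde{\bar g}_i$ admits a \emph{unique} lift $\tilde g_i\in\DS$ which commutes with $F(T)$ and is $\CC^0$-close to $g_i$; because $\pi$ is a local $\CC^\infty$-diffeomorphism, this $\tilde g_i$ is actually $\CC^\infty$-close to $g_i$ whenever $\tilde{\bar g}_i$ is $\CC^\infty$-close to $\bar g_i$.

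The heart of the argument is then to check that the $\tilde g_i$ still pairwise commute. Each $\tilde g_i$ commutes with $F(T)$, hence so does $[\tilde g_i,\tilde g_j]$, which projects under $\pi$ to $[\tilde{\bar g}_i,\tilde{\bar g}_j]=\id_{\T_T}$; therefore $[\tilde g_i,\tilde g_j]$ lies in the deck group $F(T)$. But this commutator is $\CC^0$-close to $[g_i,g_j]=\id_{\T^1}$, and $F(T)$ being finite and discrete, it must equal $\id_{\T^1}$ as soon as the initial perturbation is small enough. The resulting tuple $(\tilde g_1,\dots,\tilde g_m)$ then defines a $\Z^m$-action $\tilde F|_G$ close to $F|_G$ and commuting with $F(T)$; combined with the unchanged $F|_T$, it assembles into a morphism $G\times T\to\DS$, and through the projection $\Z^n\to \Z^n/\ker F = G\times T$, into the required perturbation of $F$.

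The main (rather mild) technical obstacle is to control the $\CC^\infty$-closeness of the lifts rather than just their $\CC^0$-closeness. This follows immediately from $\pi$ being a local $\CC^\infty$-diffeomorphism: on a small enough evenly-covered patch the lifting operation is literally $\tilde g_i = s\circ \tilde{\bar g}_i\circ \pi$ for a smooth local section $s$ of $\pi$, and $\CC^\infty$-proximity is transferred verbatim. Apart from this, the argument is purely algebraic-topological and relies only on two elementary facts: that finite covers are local smooth isomorphisms, and that finite subgroups of $\DS$ are discrete, so that a commutator landing in $F(T)$ and $\CC^0$-close to the identity must be the identity.
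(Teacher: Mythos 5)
Your proof is correct and follows essentially the same route as the paper's (much terser) argument: lift each generator of the perturbed quotient action, then observe that the commutator of two lifts is a deck transformation (it projects to the identity) which is $\CC^0$-close to the identity, hence trivial. The extra details you supply — uniqueness of the $F(T)$-equivariant lift near $g_i$, and $\CC^\infty$-control via local sections of the covering — are exactly the points the paper leaves implicit.
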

\begin{proof}[Idea of the proof] Every generator of the perturbed action on the quotient can be lifted in a perturbation of the corresponding generator of $F(G)$. The commutator of two of these lifts is a diffeomorphism close to the identity map, and whose projection on $\T_T$ is the identity. One deduces that this commutator is the identity. 
\end{proof}

\begin{remark} $F_T$ does not admit any free sub $\Z^2$-action. 
\end{remark}

Now the induction hypothesis implies that $F_T$ admits $\Cinf$-small perturbations on the path-connected component of the trivial action. According to Lemma~\ref{l.lift} this means that $F$ admits a $\Cinf$-small perturbation $\tilde F$ isotopic to a $\Z^n$-action which projects on $\T_T$ as the trivial action.  In other words, $\tilde F$ is isotopic to an action by rational rotations, hence is isotopic to the trivial action. 

This ends the proof of Theorem~\ref{t:b} in the case where $F$ is not injective. From now on we assume that $F$ is injective. 


\subsection{Infinite rotation number group}

The rotation number $\rho$ is a classical invariant built by Poincar\'e for orientation preserving homeomorphisms of the circle. It is not a group morphism on $\DS$.  However, it defines a morphism in restriction to any abelian group. In particular it induces a morphism $\rho\colon F(\Z^n)\to \T^1$. In this section, we show that if $F$ is injective, $\rho(F(\Z^n))$ cannot be infinite. 

Assume $\rho(F(\Z^n))$ is infinite. Then one of the elements  $f$ of $F(\Z^n)$ has an irrational rotation number. In particular, Denjoy theorem implies that $f$ is topologically conjugate to an irrational rotation. 

Now $F$ does not admit any free sub $\Z^2$-action.  Therefore, the kernel of $\rho$ is isomorphic to $\Z^{n-1}$.  Every element $g$ in this kernel has at least one fixed point $x_0$.  But since $f$ commutes with $g$, the orbit $f^i(x_0)$, $i\in\Z$, consists of fixed points of $g$. This orbit is dense, so $g$ is the identity map, which contradicts the injectivity hypothesis on $F$ (recall $n>1$). 


\subsection{Finite rotation number group: the two cases}

We assume, from now on, that the image of $F(\Z^n)$ by the rotation number homomorphism $\rho$ is finite. Let $G\subset \Z^n$ be the Kernel of $\rho\circ F$. It is a sub-lattice isomorphic to $\Z^n$ and the quotient $\Z^n/G$ is finite. 

The image $\rho(F(\Z^n))$ is a finite subgroup of $\T^1$ hence is isomorphic to some $\Z/k\Z$: in other words  $\rho(F(\Z^n))$ is $\{0,\frac 1k,\dots,\frac{k-1}k\}$. 

\begin{lemma}\label{l.base} There is a basis $(f,g_2,\dots,g_{n})$ of $\Z^n$ with $g_i\in G$  and such that $\rho(F (f))$ is a generator of  $\rho(F(\Z^n))$.  In particular, if $\rho(F(\Z^n))=\Z/k\Z$ with $k\neq 1$ that means: $\rho(F(f))= \frac \alpha k$ with $\alpha \wedge k=1$
\end{lemma}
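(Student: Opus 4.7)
The plan is to deduce this lemma purely from the structure theorem for subgroups of $\Z^n$ (the ``adapted basis theorem'', equivalent to Smith normal form). Everything follows formally once one remembers that $G \subset \Z^n$ is a sublattice of full rank with cyclic quotient $\Z^n/G \simeq \Z/k\Z$.

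More concretely, I would first record the homomorphism $\phi = \rho \circ F \colon \Z^n \to \T^1$; its image is by hypothesis the cyclic group of order $k$ generated by $\frac{1}{k}$, and its kernel is exactly $G$, so $\phi$ induces an isomorphism $\Z^n/G \xrightarrow{\sim} \Z/k\Z$. The adapted basis theorem then provides a basis $(e_1, \dots, e_n)$ of $\Z^n$ together with positive integers $d_1 \mid d_2 \mid \cdots \mid d_n$ such that $(d_1 e_1, \dots, d_n e_n)$ is a basis of $G$, which gives $\Z^n/G \simeq \prod_{i=1}^n \Z/d_i\Z$. Since the left-hand side is cyclic of order $k$, all the $d_i$ must equal $1$ except for the largest one, which must equal $k$; by the divisibility chain that forces $d_1 = \cdots = d_{n-1} = 1$ and $d_n = k$.

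Setting $f = e_n$ and $g_i = e_{i-1}$ for $i = 2, \dots, n$ then produces the required basis: by construction $(f, g_2, \dots, g_n) = (e_n, e_1, \dots, e_{n-1})$ is a basis of $\Z^n$, the elements $g_i = e_{i-1}$ lie in $G$ because $d_{i-1} = 1$, and the class of $f = e_n$ generates $\Z^n/G$, so $\rho(F(f)) = \phi(f)$ is a generator of $\rho(F(\Z^n)) = \Z/k\Z$. For the ``in particular'' clause, a generator of the cyclic group $\{0, \frac{1}{k}, \dots, \frac{k-1}{k}\}$ is precisely an element of the form $\frac{\alpha}{k}$ with $\alpha \wedge k = 1$, which gives the stated form of $\rho(F(f))$.

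There is no genuine obstacle here; the statement is a purely algebraic consequence of the classification of finite abelian groups applied to the cyclic quotient $\Z^n/G$. The only thing to be careful about is the ordering conventions (making sure that the ``large'' $d_i$ is placed last so that $f$ is the one element outside $G$), but this is automatic from the divisibility chain $d_1 \mid \cdots \mid d_n$.
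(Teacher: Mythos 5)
Your argument is correct, but it takes a different route from the paper's. You invoke the adapted basis theorem (Smith normal form) for the full-rank sublattice $G=\ker(\rho\circ F)\subset\Z^n$: since $\Z^n/G\simeq\prod_i\Z/d_i\Z$ with $d_1\mid\dots\mid d_n$ must be cyclic of order $k$, the divisibility chain forces $d_1=\dots=d_{n-1}=1$ and $d_n=k$, and the adapted basis immediately yields the claim (including the case $k=1$ uniformly). The paper instead argues by hand: it picks $h\in\Z^n$ with $\rho(F(h))=\frac1k$, takes $f$ to be a generator of the rank-one group $\R.h\cap\Z^n$ (so that $\rho(F(f))$ generates the image because $h$ is a multiple of $f$), completes the primitive vector $f$ to a basis $(f,h_2,\dots,h_n)$, and then corrects each $h_i$ by an explicit integer multiple $n_if$ (with $n_i=-\beta k\rho(F(h_i))$, $\alpha\beta\equiv1\ [k]$) so that the corrected vectors land in $G$. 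Your approach is shorter and more conceptual, at the cost of citing the structure theorem as a black box; the paper's is self-contained elementary column operations and, as a small bonus, produces an $f$ lying on a prescribed line $\R.h$. Both are complete proofs, and your handling of the ``in particular'' clause matches the paper's.
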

\begin{proof} If $k=1$, that is $\rho(F(\Z^n))=\{0\}$, there is noting to do. Let us assume now $k>1$.

Let $h\in\Z^n$ be an element such that $\rho(F(h))=\frac 1k$.  Then $\R.h\cap \Z^n$ (here we consider $\Z^n$ as a subgroup of $\R^n$) is isomorphic to a discrete subgroup of $\R$, hence is an infinite cyclic group.  We set $f$ as a generator of this group. The group generated by $\rho(F(f))$ contains $\frac 1k$ hence is $\rho(F(\Z^n))$.  Fix some $\beta\in\N$ such that $\alpha \beta \equiv 1 [k]$.

Since $f$ is not a multiple, it can be completed into a basis $(f,h_2,\dots,h_{n})$ of $\Z^n$.  Set $g_i= h_i +{n_i}f$ with $n_i= -\beta k\rho(F(h_i))$.
\end{proof}

Consider  now a fixed point $x_0$ of  an element $h\in G$ such that $h$ does not coincide with the identity map in the neighborhood of $x_0$, that is, $x_0$ does not belong to the interior of $\Fix(h)$. Then, as a consequence of Kopell's lemma,  $x_0$ is a fixed point of every $g\in G$. Furthermore, the contact order with the identity at $x_0$ is the same for every $g$, unless  $g$ is  the identity map in the neighborhood of $x_0$.  As a consequence, one gets:

\begin{lemma} If $x_0$ is a $\Cinf$-flat non interior fixed point for some $h\in G$, then $x_0$ is a flat fixed point for every $g\in G$. 
\end{lemma}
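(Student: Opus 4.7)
The statement looks like a direct strengthening of the immediately preceding lemma, so my plan is to read it off from that lemma with almost no extra work. Fix $h\in G$ admitting $x_0$ as a $\CC^\infty$-flat, non-interior fixed point, and let $g\in G$ be arbitrary. The preceding lemma (the consequence of Kopell's lemma stated just above) gives two pieces of information about $g$ at $x_0$: first, $x_0\in\Fix(g)$; second, the order of contact of $g$ with the identity at $x_0$ agrees with that of $h$, \emph{unless} $g$ coincides with the identity on some neighbourhood of $x_0$.

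In the exceptional case, where $g$ equals the identity on a neighbourhood of $x_0$, all derivatives of $g-\id$ vanish at $x_0$, so $g$ is trivially ITI, i.e.\ $\CC^\infty$-flat at $x_0$. In the generic case, the common order of contact with the identity is, by hypothesis on $h$, infinite: since $h$ is ITI at $x_0$, every derivative of $h-\id$ vanishes there, so the same must hold for $g-\id$, and $g$ is $\CC^\infty$-flat at $x_0$ as well. Either way, $x_0\in\ITI(g)$, which is exactly what we wanted.

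The only thing to be careful about is that the preceding lemma requires $x_0$ to be a \emph{non-interior} fixed point of some element of $G$ before it can be applied; this is supplied by the hypothesis that $x_0\notin\Int(\Fix(h))$. No additional argument (beyond noting that ``infinite order of contact'' is preserved when the orders coincide) seems to be needed, so I don't anticipate any real obstacle here — the content is already packaged in the Kopell-based lemma that precedes it, and this statement is essentially its ``flat'' corollary, used later to argue that the flat common fixed points of $G$ behave coherently with respect to the whole group.
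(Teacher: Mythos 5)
Your proposal is correct and follows exactly the route the paper intends: the lemma is stated as an immediate consequence of the preceding Kopell-based observation (every $g\in G$ fixes $x_0$ and, unless it is the identity near $x_0$, has the same order of contact with the identity there as $h$), and your two-case reading of that observation is precisely the paper's implicit argument. Nothing is missing.
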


Notice that, if $h\neq id $ has a flat fixed point, it also has a non-iterior flat fixed point. So the two cases \emph{$G$ with flat fixed points} and \emph{$G$ without flat fixed point} are well-defined. 

The existence or non-existence of flat fixed points lead to different arguments.

\subsection{Existence of flat fixed points} 
In this section we assume that $x_0$ is a common flat fixed point for $G$.  
\begin{remark} Since $\Z^n/G$ is finite, one easily checks that the orbit of $x_0$ under $\Z^n$ is finite: just notice that every element of $\Z^n$ can be written as $g_i\circ g$ where $g\in G$ and $g_1,\dots, g_k$ are representatives of the elements of $\Z^n/G$.  
\end{remark}  

Let $x_0,x_1,\dots, x_{k-1}$ be the the orbit of $x_0$ under $\Z^n$ endowed with an indexation which is cyclically ordered on the circle $\T^1$.  We denote by $I_i$ the oriented segment $[x_i,x_{i+1}]$, $i\in\Z/k\Z$. Any $h\in \Z^n$ acts on this orbit by $x_i\mapsto x_{i+j}$ where $\frac jk$ is the rotation number $\rho(h)$, and thus $h(I_i)=I_{i+j}$. 

Let $(f,g_2,\dots, g_{n})$ be the basis of $\Z^n$ given by Lemma~\ref{l.base}. Then $f(x_i)=x_{i+\alpha}$ and $f(I_i)=I_{i+\alpha}$ for every $i$. Notice that the first return of $f$ in a segment $I_i$ is $f^k$ and admits $\partial I_i$ as flat fixed  points. 

\begin{lemma}
Under the hypothesis above, $(f,g_2,...,g_n)$ admits an arbitrarilly $\CC^\infty$-small perturbation isotopic to the trivial action $(\id,...\id)$.
\end{lemma}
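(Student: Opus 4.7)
The plan is to reduce the perturbation problem on $\T^1$ to an analogous problem on the single fundamental interval $I_0=[x_0,x_1]$, where Theorem~A on $\Z^n$-actions on segments can be applied directly. Since every $g_j$ lies in $G$ and $x_0$ is a common flat fixed point of $G$, the preceding lemma makes each $x_i$ a $\Cinf$-flat common fixed point of $g_2,\ldots,g_n$. Moreover $f^k$ belongs to $G$ (as $\rho(F(f^k))=k\cdot\alpha/k=0$), so $f^k$ also fixes each $x_i$ flatly. Hence $(f^k|_{I_0},g_2|_{I_0},\ldots,g_n|_{I_0})$ is a commuting $n$-tuple of smooth diffeomorphisms of $I_0$, all ITI at $\partial I_0$. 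Applying the ITI variant of Theorem~A on $J=I_0$ (cf.\ Remark~\ref{r:ITI} together with the extension to general $n$ in \ref{ss:Zn}), for any prescribed $\eps>0$ and $m\in\N$ we obtain a commuting tuple $(h_0,\tilde g_{2,0},\ldots,\tilde g_{n,0})$ of smooth diffeomorphisms of $I_0$ that is $\eps$-$\CC^m$-close to the above, still ITI at $\partial I_0$, and joined to $(\id,\ldots,\id)$ by a continuous path $t\mapsto(h_0^t,\tilde g_{2,0}^t,\ldots,\tilde g_{n,0}^t)$ of commuting tuples in the same ITI class.

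Next I would extend this interval perturbation $f$-equivariantly to $\T^1$. Set $B:=f^{k-1}|_{I_0}\colon I_0\to I_{(k-1)\alpha}$, so that $f^k|_{I_0}=f|_{I_{(k-1)\alpha}}\circ B$. Define $\tilde f|_{I_j}:=f|_{I_j}$ for $j\in\{0,\alpha,\ldots,(k-2)\alpha\}\pmod k$, and $\tilde f|_{I_{(k-1)\alpha}}:=h_0\circ B^{-1}$, so that $\tilde f^k|_{I_0}=h_0$. Then set $\tilde g_j|_{I_0}:=\tilde g_{j,0}$ and propagate around the $f$-orbit of intervals by conjugation:
$$\tilde g_j|_{I_{i+\alpha}}:=\tilde f|_{I_i}\circ \tilde g_j|_{I_i}\circ (\tilde f|_{I_i})^{-1},\quad i\in\{0,\alpha,\ldots,(k-2)\alpha\}.$$
The only non-trivial point in this construction is the consistency of the cyclic recursion: after $k$ steps one returns to $I_0$ and the condition reduces exactly to $\tilde g_{j,0}=h_0\,\tilde g_{j,0}\,h_0^{-1}$, which holds because $(h_0,\tilde g_{j,0})$ commutes by construction. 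This is the core reason why the clean perturbation on the interval can be promoted to an equivariant perturbation on the circle, and is really the main (though mild) obstacle in the proof.

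Routine verifications then yield the required properties of $\tilde F:=(\tilde f,\tilde g_2,\ldots,\tilde g_n)$. Smoothness of $\tilde f$ and the $\tilde g_j$ at every $x_i$ follows from the ITI condition at $\partial I_0$ carried around by smooth conjugation. Pairwise commutativity of the $\tilde g_j$ holds because both sides agree on each $I_{m\alpha}$ with the same smooth conjugate of $\tilde g_{j,0}\tilde g_{l,0}=\tilde g_{l,0}\tilde g_{j,0}$, while commutativity of each $\tilde g_j$ with $\tilde f$ is built into the definition. Finally, $\Cinf$-proximity of $\tilde F$ to $F$ reduces to closeness on $I_0$: $\tilde f$ differs from $f$ only on $I_{(k-1)\alpha}$ by $(h_0-f^k|_{I_0})\circ B^{-1}$, and each $\tilde g_j|_{I_{m\alpha}}$ is a smooth conjugate of $\tilde g_{j,0}$ by a fixed chain of $f|_{I_\ell}$'s, hence $\Cinf$-close to the same conjugate of $g_{j}|_{I_0}$, which is $g_j|_{I_{m\alpha}}$.

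To conclude, performing the same extension recipe on the whole path $t\mapsto(h_0^t,\tilde g_{2,0}^t,\ldots,\tilde g_{n,0}^t)$ yields a continuous isotopy of smooth commuting $\Z^n$-actions on $\T^1$ joining $\tilde F$ to an action of the form $(\tilde f',\id,\ldots,\id)$ with $(\tilde f')^k|_{I_0}=\id$. Since $\tilde f'$ is a smooth diffeomorphism whose $k$-th iterate is the identity on one interval of the $\tilde f'$-orbit, it is the identity on the whole circle. Hence $\tilde f'$ is a smooth periodic diffeomorphism of $\T^1$ of rotation number $\alpha/k$, smoothly conjugate to the rotation $R_{\alpha/k}$, and can be isotoped to the identity through smooth conjugates of $R_{t\alpha/k}$, $t\in[0,1]$, completing the isotopy of $\tilde F$ to the trivial action.
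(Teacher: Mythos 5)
Your proposal is correct and follows essentially the same route as the paper: perturb $(f^k,g_2,\dots,g_n)$ on the fundamental interval $I_0$ using the ITI-boundary version of the interval result (Remark~\ref{r:ITI}), modify $f$ only on $I_{-\alpha}=I_{(k-1)\alpha}$ via $h\circ f^{1-k}$ so that the first-return map to $I_0$ becomes the perturbed generator, propagate the $g_j$ by $f$-conjugation around the orbit (with well-definedness reducing to the commutation $h\,\tilde g_{j,0}=\tilde g_{j,0}\,h$), and run the same recipe along the isotopy to reach $(\tilde f',\id,\dots,\id)$, which is of finite order and hence isotopic to the identity. The only differences are cosmetic; your final step even supplies slightly more detail than the paper on why $(\tilde f',\id,\dots,\id)$ is isotopic to the trivial action.
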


\begin{proof}
According to Theorem~\ref{t:b} there is an arbitrarilly  $\Cinf$-small perturbation  $(h_1,\dots h_n)$ of the restriction of $(f^k,g_2,\dots,g_{n})$ to $I_0$ and isotopies $h_{1,t},\dots,h_{k,t}$ with the following properties:
\begin{itemize}
\item the $h_{i,t}$ are all  flat at $\partial I_0$,
\item for every $t$ the $h_{i,t}$, $i\in\{1,\dots, n\}$ define a $\Z^n$ action on $I_0$,
\item $h_{i,0}=h_i$ for every $i$,
\item $h_{i,1}=id$ for every $i$.
\end{itemize}
Now define $f_t$ as follows:
\begin{itemize}
\item for $i\neq -\alpha$, the restriction of $f_t$ to $I_i$ is $f$.
\item the restriction of $f_t$ to $I_{-\alpha}$ is $h_{1,t}\circ f^{1-k}$ 
\end{itemize}

Notice that the restriction of $f_t^k$ to $I_0$ is $f_t|_{I_{-\alpha}}\circ f^{k-1}|_{I_0}= h_{1,t}$, and therefore commutes with $h_{j,t}$ for every $j\in\{2,\dots, n\}$.

For every such $j$, we define $g_{j,t}$ on $I_{i\alpha}$, $i\in \Z$, by $g_{j,t}=f_t^i\circ h_{j,t}\circ f_t^{-i}$. This is well-defined because $f_t^k|_{I_0}$ commutes with $h_{j,t}$ so that $ f_t^i\circ h_{j,t}\circ f_t^{-i}$ only depends on the class of $i$ in $\Z/k\Z$. For every $i$, the restriction of $g_{j,t}$ to $I_{i\alpha}$ is flat on the boundary, so the global map $g_{j,t}$ is a \emph{smooth} diffeomorphism of the circle $\T^1$. 

One easily checks that $(f_t,g_{2,t}\dots,g_{n,t})$ realises an isotopy of actions $F_t, t\in[0,1]$,  of $\Z^n$ on $\T^1$ such that $F_0$ is an (arbitrarilly) $\Cinf$-small perturbation of $F$ and $F_1$ is $(f_1,id,id,\dots,id)$. Finally, $F_1$ is isotopic to the trivial action, ending the proof. 

\end{proof}

\subsection{No flat fixed points: relative translation numbers... two cases}

We are now left with the case where no non-trivial element of $F(\Z^n)$ has an infinitely degenerate fixed point. However, every $g_i$ (cf. Lemma \ref{l.base}) has fixed points and all the non-trivial elements of $G$ have the same set of fixed points (cf. \ref{c:kst}). Let $x_0$ be a fixed point for the action of $G$.
 Notice that $(f^k,g_2,\dots,g_n)$ defines a nondegenerate $\Z^n$ action on the oriented segment $[x_0, x_0]$ obtained by cutting $\T^1$ at $x_0$. 
Therefore, since $f^k\neq \id$, the relative translation numbers $\tau_{g_i/f^k}$ are well-defined (cf. \ref{t:kst}).  Futhermore $\tau_{./f^k}$ is a morphism from $<f^k,g_2\dots g_n>\simeq \Z^n$ to $\R$. 

If this morphism is not injective, then two elements of $<f^k,g_2\dots g_n>$ have the same translation number with respect to $f^k$ and therefore coincide, which contradicts our injectivity hypothesis on $F$.

\subsection{No flat fixed point: irrational translation numbers}

We are left to consider the case where $\tau_{./f^k}$ is injective.  If $n>1$ this implies that the group $\tau_{./f^k}(<f^k,g_2\dots g_n>)$ contains an irrational number. An easy adaptation of \ref{t:kst} to the circle case gives:
  
\begin{lemma}
Under the hypotheses above, there is a unique $\Cinf$ vector field $X$ on $\T^1$ such that $f^k= X_1$ and $g_i= X_{t_i}$ for some $t_i\in \R$. 
Furthermore, $f$ leaves the vector field $X$ invariant, that is $f_*(X)=X$.
\end{lemma}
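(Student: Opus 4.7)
The strategy is to reduce to the interval case by cutting the circle at the fixed point $x_0$ of $G$, then apply Proposition~\ref{t:kst} and Takens's Theorem~\ref{t:Takens} to patch the resulting vector field smoothly across the seam.

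First, I cut $\T^1$ at $x_0$ to obtain an oriented segment $I=[x_0,x_0]$. Since $x_0$ is fixed by $G$ and by $f^k$, the restriction of $(f^k,g_2,\dots,g_n)$ to $I$ is a $\Z^n$-action, and it is nondegenerate: by assumption no non-trivial element of $F(\Z^n)$ admits an ITI fixed point. By hypothesis, the morphism $\tau_{./f^k}$ takes an irrational value somewhere on $\langle f^k,g_2,\dots,g_n\rangle$; reindexing if necessary, I may assume $\tau_{g_2/f^k}$ is irrational. Then Proposition~\ref{t:kst}$(iii)$ applied to $(f^k,g_2)$ on $I$ produces a global Szekeres vector field $\xi$ on $I$ which is $\CC^1$ on $I$, $\Cinf$ on its interior, and satisfies $\xi_1=f^k$, $\xi_{\tau_{g_2/f^k}}=g_2$.

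The core point is to promote $\xi$ to a $\Cinf$ vector field $X$ on $\T^1$. Away from the seam, there is nothing to check. At $x_0$, viewed as an interior point of $\T^1$, I observe that $x_0$ is a non-ITI fixed point of $f^k$ because we are in the ``no flat fixed point'' case. Takens's theorem~\ref{t:Takens} then guarantees that the left and right Szekeres vector fields of $f^k$ at $x_0$ match smoothly, so $\xi$ extends across $x_0$ to a genuine $\Cinf$ vector field $X$ on $\T^1$ with $X_1=f^k$. To identify each $g_i$ with a time map of $X$, I apply Proposition~\ref{t:kst}$(ii)$ to $(f^k,g_i)$ on $I$ (for $g_i\neq\id$): this yields $t_i=\tau_{g_i/f^k}\in\R$ such that $g_i$ coincides with the time-$t_i$ map of $X$ on $I\setminus\Fix(f^k)$, and thus on all of $\T^1$ by continuity. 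For $g_i=\id$ I set $t_i=0$.

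Uniqueness of $X$ follows from Corollary~\ref{t:Z1}: any other $\Cinf$ vector field $X'$ on $\T^1$ with $X'_1=f^k$ is a $\CC^1$ Szekeres vector field of $f^k$ on each connected component of $\T^1\setminus\Fix(f^k)$, hence coincides with $X$ on each such component and, by continuity, globally. Finally, for the invariance $f_*(X)=X$, note that $f$ commutes with $f^k$, so $(f_*X)_1=f\circ f^k\circ f^{-1}=f^k=X_1$; since $f_*X$ is a $\Cinf$ vector field on $\T^1$ with the same time-one map as $X$, uniqueness gives $f_*(X)=X$. The only delicate point in the whole argument is the smooth matching at the seam $x_0$, which is precisely where the ``no flat fixed point'' hypothesis is used via Takens.
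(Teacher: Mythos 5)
Your proof is correct and follows exactly the route the paper intends: the paper gives no argument beyond asserting that the lemma is "an easy adaptation of Proposition~\ref{t:kst} to the circle case", and your write-up (cutting at $x_0$, invoking \ref{t:kst}$(iii)$ via the irrational $\tau_{g_2/f^k}$, matching the jets at the seam by Takens — noting that a non-ITI fixed point is automatically isolated, so Theorem~\ref{t:Takens} applies — and deducing $f_*X=X$ from uniqueness) is a faithful filling-in of that adaptation.
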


We conclude the proof by defining an isotopy of the action $F$ keeping $f$ unchanged by $g_{i,t}=X_{(1-t).t_i}$, for $t\in [0,1]$. 
For $t=1$ one gets an action generated by $f,id,\dots, id$ which is isotopic to the trivial action.

\section{Estimates for the Szekeres vector field}\label{s:est}

This section is devoted to the proofs of Lemma \ref{t:szek} and \ref{p:estb}. Let us stress again (cf. \ref{sss:control}) that, though long and rather technical, these proofs essentially repeat arguments by Sergeraert (cf.  \cite{Se}2.9 and \cite{Se}3.6), only in the more general setting of diffeomorphisms with possibly infinitely many fixed points.

Henceforth, $f$ is an element of $\Dsp$, $\partial\subset\{0,1\}$, whose global Szekeres vector field $\xi$ is well-defined. This means that for every connected component $(a,b)$ of $[0,1]\setminus\Fix(f)$, the Szekeres vector fields of $f\res{[a,b)}$ and $f\res{(a,b]}$ coincide on $(a,b)$, and $\xi$ is the vector field coinciding with these on each $(a,b)$ and vanishing on $\Fix(f)$. In particular, according to \ref{t:Takens}, $\xi$ is $\Cinf$ on $[0,1]\setminus\partial$. Now there are ``explicit" formulas for those (local) Szekeres vector fields and their derivatives, and thus for $\xi$ and $D\xi$ (cf. \eqref{e:xi} and \eqref{e:Dxi} below). Those are the starting point to prove estimates \ref{t:szek} and \ref{p:estb}. We refer the reader to \cite{Se}, \cite{Yo} or \cite{Na1}, among others, for a proof of these formulas. 

Let $\eta_0$ denote the $\Cinf$ vector field on $[0,1]$ defined by $\eta_0(x) = (f(x)-x)\partial x$. Recall that for all $k\in\N$ and $x\in[0,1]$, $f^{\pm k}(x)$ and $f^{\mp k}(x)$ denote $\max\left(f^k(x),f^{-k}(x)\right)$ and $\min\left(f^k(x),f^{-k}(x)\right)$ respectively. Note that, on every connected component of $[0,1]\setminus \Fix(f)$, $f^{\pm k}$ coincides with either $f^k$ or $f^{-k}$, and thus induces a $\Cinf$ diffeomorphism. Then, for every connected component $(a,b)$ of $[0,1]\setminus \Fix(f)$, on $[a,b)$,
\begin{equation}\label{e:xi}
\xi=\tau_a\lim_{k\to +\infty}(f^{\mp k})^*\eta_0, \quad\text{with}\quad \tau_a=\begin{cases}\frac{\log Df(a)}{Df(a) - 1}&\text{if $Df(a) \neq 1$}\\
1&\text{otherwise,}\end{cases}
\end{equation}
and
\begin{equation}
\label{e:Dxi}D\xi =\log Df(a)-\sum_{i=0}^{+\infty}
    (Lf^{\mp1} \times \xi)\circ f^{\mp i}
\end{equation}
(part of the argument of Szekeres' theorem \ref{t:szek} consists in proving that the sequence and series above converge uniformly on every compact subset of $[a,b)$). 

\subsection{Proof of Lemma \ref{t:szek}}

We want to prove that $\log\left|\frac{\xi}{f-\id}\right|$ is bounded on $[0,1]\setminus \Fix(f)$ and tends to $0$ at $0$ if $f$ is $\CC^2$ tangent to $\id$ at $0$. Denote by $\eta_k$ the vector field on $[0,1]$ vanishing on $\Fix(f)$ and equal to $(f^{\mp k})^*\eta_0$ on every connected component of $[0,1]\setminus\Fix(f)$. According to \eqref{e:xi}, for all $x$ in such a component $(a,b)$,
\begin{equation}\label{e:equiv}\log\frac{\xi(x)}{f(x)-x}=\log\tau_a+\lim_{k\to+\infty}\log\frac{\eta_k(x)}{\eta_0(x)}=\log\tau_a+\sum_{k=0}^{+\infty}\log\frac{\eta_{k+1}(x)}{\eta_k(x)}.
\end{equation}
Recall that $\tau_a=\frac{\log Df(a)}{Df(a)-1}$ if $Df(a)\neq 1$, $1$ otherwise. So if $C_x$ denotes $\norm{\log Df}_{0,[0,x]}$, 
\begin{equation}\label{e:taua}
|\log\tau_a|\le \sup_{{|y|\le C_x}\atop {y\neq 0}}\log\frac{y}{e^y-1}=:M_x\le \sup_{{|y|\le C_1}\atop {y\neq 0}}\log\frac{y}{e^y-1}=M_1<+\infty.
\end{equation}
Moreover,
$$ \log \frac{ \eta_{k+1} }{ \eta_k } 
 = \log \frac{ (f^{\mp k})^* \eta_1 }{ (f^{\mp k})^* \eta_0 }  
 = \log \frac{ \eta_1 \circ f^{\mp k} }{ \eta_0 \circ f^{\mp k} }
 = \theta \circ f^{\mp k}, $$
where $\theta:=\log \frac{ \eta_1}{ \eta_0}$ on $[0,1]\setminus \Fix(f)$. Now for all $x\in[0,1]\setminus \Fix(f)$,

\begin{align*}|\theta (x)|
             &= \left|\log \frac{ f^{\mp2}(x) - f^{\mp1}(x) }{ Df^{\mp1}(x) \, (f^{\mp1}(x) - x) }\right|\\
&=\left|\log\frac{ f^{\mp2}(x) - f^{\mp1}(x) }{ (f^{\mp1}(x) - x) }-\log Df^{\mp1}(x)\right|\\
&=\left|\log Df^{\mp1}(x_0)-\log Df^{\mp1}(x)\right|\quad\text{for some $x_0\in[f^{\mp1}(x),x]$}\\
&\le \norm{D\log Df^{\mp1}}_{0,[x_0,x]}|x_0-x|\\
&\le \norm{D\log Df^{\mp1}}_{0,[0,x]}(x-f^{\mp1}(x)).
\end{align*}
So, again for all $x\in[0,1]\setminus \Fix(f)$, \eqref{e:equiv} implies
\begin{equation}\label{e:rapport}\left|\log\frac{\xi(x)}{f(x)-x}\right|
\le \underbrace{\sup_{a\in[0,x]\cap\Fix(f)}\left|\log \tau_a\right|}_{\le M_x\le M_1\text{ by \eqref{e:taua}}}
+\| \underbrace{D\log Df^{\mp1}}_{{=\frac{D^2f^{\mp1}}{Df^{\mp 1}}}\atop{\text{bounded on $[0,1]$}}}\|_{0,[0,x]}
\underbrace{\sum_{k=0}^{+\infty}(f^{\mp k}(x)-f^{\mp (k+1)}(x))}_{\le x\le 1}.\end{equation}
Thus $\log\left|\frac{\xi}{f-\id}\right|$ is bounded on $[0,1]\setminus \Fix(f)$.
Now if $f$ is $\CC^2$-tangent to $\id$ at $0$, 
$$C_x\underset{x\to 0}{\to}1\quad\text{so}\quad M_x\underset{x\to 0}{\to}0,$$
and 
$$\norm{\frac{D^2f^{\mp1}}{Df^{\mp 1}}}_{0,[0,x]}\underset{x\to 0}{\to}0.$$
So by \eqref{e:rapport},
$$\left|\log\frac{\xi(x)}{f(x)-x}\right|\underset{{x\to 0}\atop {x\notin\Fix(f)}}{\to}0,\quad \text{(and $\xi(x)=f(x)-x=0$ if $x\in\Fix(f)$)}$$
which means
$$\xi(x)\underset{x\to0}{\sim}f(x)-x.$$
Now $f^{-1}$ is also $\CC^2$-tangent to $\id$ at $0$ and also has a global Szekeres vector field on $[0,1]$, which is none but $-\xi$, so similarly,
$$-\xi(x)\underset{x\to0}{\sim}f^{-1}(x)-x,$$
which concludes the proof of Lemma \ref{t:szek}.


\subsection{Proof of Lemma \ref{p:estb}}\label{ss:estb}

Here again, $f$ is an element of $\Dsp$, $\partial\subset\{0,1\}$, whose global Szekeres vector field $\xi$ is well-defined. We assume in addition that $\partial$ contains $0$, i.e that $f$ is ITI at $0$. We want to prove that for all $n\in\N^*$, 
\begin{equation}\label{e:Dnxi}\tag{$E_n$}
\forall \eta>0,\quad\xi^{n-1}(x)D^{n}\xi(x)\underset{\underset{x\neq0}{x\to 0}}{=}O\left( \norm{f-\id}_{0,[0,x]}^{n-\eta}\right).
\end{equation}
This is done by induction.
\subsubsection{Preliminaries}
For $n=1$, \eqref{e:Dnxi} becomes:
$$\forall \eta>0,\quad D\xi\underset{x\to 0}{=}O\left( \norm{f-\id}_{0,[0,x]}^{1-\eta}\right).$$
We will see that this estimate follows from expression \eqref{e:Dxi} for $D\xi$.
To prove Lemma \ref{p:estb} in general, the idea is to establish (by induction) expressions similar to \eqref{e:Dxi} for higher derivatives of $\xi$ (cf. \ref{l:alg} below), which will in turn provide the wanted estimate \eqref{e:Dnxi} (again by induction). More precisely, we are going to prove that, for all $n\ge 1$, on $[0,1]\setminus\Fix(f)$,
\begin{equation}\label{e:recxi}
\xi^{n-1}D^{n}\xi=P_{n}(D\xi,\xi D^2\xi,...,\xi^{n-2}D^{n-1}\xi)+\sum_{i=0}^{+\infty}
    \varphi_{n}\circ f^{\mp i}\quad(+c_1\text{ if $n=1$})
\end{equation}
for some $P_{n}$ and $\varphi_{n}$ defined below, where $c_1:[0,1]\setminus\Fix(f)\to\R$ denotes the locally constant function equal to $\log Df(a)$ on every connected component $(a,b)$ of $[0,1]\setminus \Fix(f)$. Formula \eqref{e:Dxi} gives such an expression for $n=1$, with $P_1=0$ and $\varphi_1=-Lf^{\mp 1}\times \xi$ on $[0,1]\setminus\Fix(f)$.  Note that $\varphi_1$ is well-defined and $\Cinf$ since, again, on every connected component of $[0,1]\setminus\Fix(f)$, $f^{\mp1}$ coincides either with $f$ or with $f^{-1}$, and thus induces a $\Cinf$-diffeomorphism of that component. \medskip

It will prove handy, to carry out the induction, to use the Lie derivative along $\xi$ rather than the usual derivative (mainly because of relations \eqref{e:lie} below). We will denote it by $L_\xi$: for every differentiable function $\varphi$ on an open subset $U$ of $(0,1)$, $L_\xi \varphi = D\varphi.\xi $ on $U$. Remember (cf. theorem \ref{t:kst}) that $\xi$ is $\Cinf$ on 
$[0,1]\setminus \partial\supset(0,1)$, so for every $\CC^k$ function $\varphi$ on $U\subset(0,1)$, $(L_\xi)^j\varphi$, $0\le j\le k$, is well-defined and $\CC^{k-j}$ on $U$. Finally, for readability reasons, let us introduce the following notations:
$$\forall n\ge 1,\quad \mu_n=\xi^{n-1}D^{n}\xi,\quad  \Phi_n= (L_\xi)^{n-1}D\xi\quad\text{and}\quad\varphi_n=(L_\xi)^{n-1}\varphi_1,$$
the first two being defined on $(0,1)$ and the last one on $[0,1]\setminus\Fix(f)$.
With these notations, 
\begin{equation}\begin{cases}
\mu_1=\Phi_1\\
L_\xi f^{\mp i}= \xi\circ f^{\mp i}\label{e:lie}\quad \forall i\in\Z\\
\mu_{n+1}=L_\xi\mu_{n}-(n-1)\mu_1\mu_{n}\quad \forall n\ge 1,
\end{cases}\end{equation}
which, by induction, leads to the following lemma. 
In this statement (for $n=1$), a polynomial (function) in $0$ variables composed with a $0$-tuple of functions (resp. of monomials in one variable) is to be understood as a \emph{constant} (resp. a constant polynomial in $1$ variable). We adopt this (controversial) convention only to make the statement and its proof simpler.
\begin{lemme}
\label{l:alg} For all $n\ge 1$, 
$$\mu_n=\Phi_n-P_n(\mu_1,...,\mu_{n-1})\quad\text{on $(0,1)$}$$
and
$$\varphi_n=-\sum_{q=0}^{n-1} D^qLf^{\mp1}\times \xi^{q+1}\times Q_{n,q}(\mu_1,...,\mu_{n-1})\quad\text{on $[0,1]\setminus\Fix(f)$},$$
for some polynomials $P_n$ and $Q_{n,q}$ in $n-1$ variables, \emph{independent of $f$}, with nonnegative (integer) coefficients, satisfying
\begin{equation}\label{e:alg}
P_n(X,...,X^{n-1}) = \alpha_n X^{n}\quad\text{and}\quad Q_{n,q}(X,...,X^{n-1})=\beta_{n,q}X^{n-1-q}\tag{$*_n$}
\end{equation}
 for some $\alpha_n,\beta_{n,q}\in \N$. 
\end{lemme}

The proof is given in the next section \ref{ss:alg}. To prove Lemma \ref{p:estb}, we combine these inductive formulas to the following estimates proved by Sergeraert in \cite{Se} using Hadamard inequalities. This time, his proofs adapt without any change to our setting.

\begin{lemma}[cf. \cite{Se}{3.3}]\label{l:had}
Let $g\in\Cinf([0,1],\R)$, infinitely flat at $0$. Then, for all $n\in\N$ and all $\eta>0$, 
$$\norm{g}_{n,[0,x]}  \underset{x\to 0}{=}O(\norm{g}_{0,[0,x]}^{1-\eta}).$$
\end{lemma}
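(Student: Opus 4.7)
The plan is to derive this from a Landau-Kolmogorov-type interpolation inequality on a fixed bounded interval, via a rescaling that turns the flatness of $g$ at $0$ into a clean cancellation of all powers of $x$. The analytic input I would invoke is the classical estimate: for integers $N\ge n\ge 0$, there exists a universal constant $K_{N,n}$ such that every $h\in C^\infty([-1,1])$ which is infinitely flat at $-1$ satisfies
$$\norm{h^{(n)}}_{0,[-1,1]} \le K_{N,n}\, \norm{h}_{0,[-1,1]}^{1-n/N}\, \norm{h^{(N)}}_{0,[-1,1]}^{n/N}.$$
This can be obtained either by zero-extending $h$ past $-1$ (which is smooth thanks to the flatness), multiplying by a fixed cutoff near $+1$, and applying Kolmogorov's inequality on $\R$; or, more elementarily, by iterating the Hadamard inequality $\norm{\phi'}_\infty^2 \le 4\norm{\phi}_\infty\norm{\phi''}_\infty$ through the successive orders of derivation.

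The rescaling is the heart of the proof. For each $x\in(0,1]$, I would define
$$h_x(t) = \begin{cases} g(xt) & \text{if } t\in[0,1],\\ 0 & \text{if } t\in[-1,0], \end{cases}$$
which lies in $C^\infty([-1,1])$ and is infinitely flat at $-1$ thanks to the flatness of $g$ at $0$. A direct computation gives
$$\norm{h_x^{(k)}}_{0,[-1,1]} = x^k\,\norm{g^{(k)}}_{0,[0,x]}\quad\text{for every }k\ge 0,$$
so applying the interpolation inequality to $h_x$ at levels $n$ and $N$ and cancelling the common factor $x^n = x^{N\cdot n/N}$ on both sides yields
$$\norm{g^{(n)}}_{0,[0,x]} \le K_{N,n}\, \norm{g}_{0,[0,x]}^{1-n/N}\, \norm{g^{(N)}}_{0,[0,x]}^{n/N}.$$

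To finish, given $\eta>0$ I would pick any integer $N\ge n/\eta$, so that $1-n/N \ge 1-\eta$. Since $g\in C^\infty([0,1])$, the quantity $\norm{g^{(N)}}_{0,[0,x]}$ is bounded by the finite constant $\norm{g^{(N)}}_{0,[0,1]}$ uniformly in $x$; and since $g(0)=0$, $\norm{g}_{0,[0,x]}\to 0$ as $x\to 0$, so eventually $\norm{g}_{0,[0,x]}\le 1$ and hence $\norm{g}_{0,[0,x]}^{1-n/N}\le \norm{g}_{0,[0,x]}^{1-\eta}$. This proves the bound for the top derivative $k=n$, and the same argument with the same $N$ works for each $k\in[\![0,n]\!]$, so taking the maximum yields the desired estimate on $\norm{g}_{n,[0,x]}$.

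The main obstacle I anticipate is really just establishing the underlying interpolation inequality on a bounded interval with a constant depending only on $N$ and $n$; once that is in hand, flatness provides exactly the boundary behaviour needed for the extension or the inductive Hadamard step, and the homogeneous rescaling then makes the powers of $x$ disappear automatically. Note that the loss of $\eta$ in the exponent is intrinsic to this approach, since one cannot take $N=\infty$ in the interpolation inequality.
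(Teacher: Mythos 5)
Your argument is correct and follows essentially the route the paper itself relies on: the paper does not prove this lemma but cites Sergeraert \cite{Se}, Lemma 3.3, whose proof is exactly an iteration of Hadamard--Landau--Kolmogorov inequalities combined with the infinite flatness of $g$ at $0$, and your rescaling $h_x(t)=g(xt)$ (with zero extension to $[-1,0]$) is a clean way to organize that computation. One small caveat: the first derivation you propose for the interpolation inequality --- zero-extend past $-1$, multiply by a cutoff near $+1$, apply Kolmogorov on $\R$ --- does not quite work as stated, since the cutoff must start decaying before $t=1$, so $\norm{(h\chi)^{(n)}}_0$ fails to control $h^{(n)}$ near $+1$ and $\norm{(h\chi)^{(N)}}_0$ involves lower-order derivatives of $h$ as well; the clean fix is to apply the half-line Landau--Kolmogorov inequality directly to the zero extension of $h$ on $(-\infty,1]$, whose sup-norms coincide with those of $h$ on $[-1,1]$ --- which is in effect what your alternative of iterating Hadamard's inequality accomplishes.
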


\begin{corollary}[cf. \cite{Se}{3.4}]\label{c:had}Let $f$ be a $\Cinf$ diffeomorphism of $[0,1]$ ITI at $0$. Then for all $n\in\N$ and all $\eta>0$, 
$$\norm{\log Df}_{n,[0,x]} \text{ and } \norm{\log Df^{-1}}_{n,[0,x]}  \underset{x\to 0}{=}O(\norm{f-\id}_{0,[0,x]}^{1-\eta}).$$
\end{corollary}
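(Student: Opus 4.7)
The plan is to deduce this from Lemma \ref{l:had} applied to the right functions: the hypothesis that $f$ is ITI at $0$ forces $\log Df$ and $\log Df^{-1}$ to be $\Cinf$ and infinitely flat at $0$, so Hadamard-type bounds immediately control their higher derivatives, and it only remains to compare their sup-norms to $\norm{f-\id}_{0,[0,x]}$.

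Since $f-\id$ is $\Cinf$ on $[0,1]$ and infinitely flat at $0$, so is $Df-1 = D(f-\id)$; in particular $|Df-1|<1/2$ on some $[0,x_0]$, so $\log Df = \log(1+(Df-1))$ is $\Cinf$ there. Its successive derivatives at $0$ are universal polynomials in the (vanishing) derivatives of $Df-1$, so $\log Df$ is infinitely flat at $0$ as well. By the Fa\`a di Bruno formula applied to $u\mapsto\log(1+u)$ at $u=Df-1$, there exist constants $C_n$ depending only on $\norm{f-\id}_{n+1,[0,x_0]}$ with
\[
\norm{\log Df}_{n,[0,x]} \le C_n\,\norm{f-\id}_{n+1,[0,x]}\quad\text{for all $x\le x_0$.}
\]
Applying Lemma \ref{l:had} to $g=f-\id$ (which is $\Cinf$ and infinitely flat at $0$) yields $\norm{f-\id}_{n+1,[0,x]} = O(\norm{f-\id}_{0,[0,x]}^{1-\eta})$ for any $\eta>0$, and combining with the previous display gives the desired estimate for $\log Df$.

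For $\log Df^{-1}$, $f^{-1}$ is also a $\Cinf$ orientation-preserving diffeomorphism of $[0,1]$, ITI at $0$, so the same argument yields
\[
\norm{\log Df^{-1}}_{n,[0,x]} = O\!\left(\norm{f^{-1}-\id}_{0,[0,x]}^{1-\eta}\right).
\]
It remains to transfer the sup-norm of $f^{-1}-\id$ to one of $f-\id$. The identity $|f^{-1}(z)-z| = |f(y)-y|$ with $y=f^{-1}(z)$ rewrites
\[
\norm{f^{-1}-\id}_{0,[0,x]} = \norm{f-\id}_{0,[0,f^{-1}(x)]},
\]
which is directly bounded by $\norm{f-\id}_{0,[0,x]}$ whenever $f\ge\id$ near $0$, since then $f^{-1}(x)\le x$.

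The main obstacle is the complementary case $f\le\id$ near $0$, where $f^{-1}(x)>x$ and one must control $\norm{f-\id}_{0,[0,f^{-1}(x)]}$ by $\norm{f-\id}_{0,[0,x]}^{1-\eta}$ despite a slight interval enlargement. I expect this to be handled by exploiting the implicit relation $f^{-1}(x)-x = |f(f^{-1}(x))-f^{-1}(x)|$ together with Hadamard (Lemma \ref{l:had}), which forces $f^{-1}(x)-x$ to be so small relative to $\norm{f-\id}_{0,[0,x]}$ that the enlargement can be absorbed into the exponent $1-\eta$ by one further application of Lemma \ref{l:had}. This absorption is the same type of delicate step which pervades Sergeraert's original estimates in \cite{Se}, and is the reason the proof is not purely formal.
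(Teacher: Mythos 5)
Your approach is correct and is essentially the paper's: Corollary \ref{c:had} is meant to follow from Lemma \ref{l:had} applied to $\Cinf$ functions that are infinitely flat at $0$ (the paper offers no proof beyond the citation of \cite{Se}, whose argument is exactly this). Your routing through Fa\`a di Bruno, giving $\norm{\log Df}_{n,[0,x]}\le C_n\norm{f-\id}_{n+1,[0,x]}$ followed by one application of Lemma \ref{l:had} to $g=f-\id$, is fine; an equally standard variant applies Lemma \ref{l:had} directly to $g=\log Df$ (which is flat at $0$) and then bounds $\norm{\log Df}_{0,[0,x]}\le 2\norm{Df-1}_{0,[0,x]}\le 2\norm{f-\id}_{1,[0,x]}=O(\norm{f-\id}_{0,[0,x]}^{1-\eta})$ by a second application of the lemma, the resulting exponent $(1-\eta)^2$ being harmless.

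The one step you leave open — controlling $\norm{f-\id}_{0,[0,f^{-1}(x)]}$ by $\norm{f-\id}_{0,[0,x]}$ when $f^{-1}(x)>x$ — needs no absorption into the exponent: a multiplicative constant suffices, so the ``delicate'' step you anticipate is in fact elementary. Two remarks. First, since $f$ may have fixed points accumulating at $0$, $f-\id$ need not have constant sign near $0$, so your dichotomy should be taken pointwise in $x$ rather than ``near $0$''; this changes nothing. Second, writing $A=\norm{f-\id}_{0,[0,f^{-1}(x)]}$ and $B=\norm{f-\id}_{0,[0,x]}$, if $f^{-1}(x)>x$ then $f^{-1}(x)-x=|f(y_0)-y_0|\le A$ for $y_0=f^{-1}(x)$, and for every $y\in[x,f^{-1}(x)]$ one has $|f(y)-y|\le |f(x)-x|+\norm{Df-1}_{0,[0,f^{-1}(x)]}\,(f^{-1}(x)-x)$, whence $A\le B+\eps_x A$ with $\eps_x=\norm{Df-1}_{0,[0,f^{-1}(x)]}\to 0$ as $x\to 0$, so $A\le 2B$ for $x$ small. (Equivalently, invoke Lemma \ref{l:dist}: for small $x\notin\Fix(f)$, $|f(y)-y|\le 2|f(x)-x|$ on $[x,f^{\pm2}(x)]\supset[x,f^{-1}(x)]$.) With this line added, your proof is complete.
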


These estimates, first used to control $\mu_1=D\xi$ (i.e to prove $(E_1)$) and then injected in the formulas of Lemma \ref{l:alg}, result, by induction, in the following Lemma, which contains Lemma \ref{p:estb} (cf. $(i)_{n+1}$). This induction is carried out in sections \ref{ss:base} and \ref{ss:step}. We set $\mu_0=0$ and recall that $c_1:[0,1]\setminus\Fix(f)\to\R$ denotes the locally constant function equal to $\log Df(a)$ on every connected component $(a,b)$ of $[0,1]\setminus \Fix(f)$.

\begin{lemme}\label{l:estb}
 For all $n\in\N^*$, for all $\eta>0$, 
\begin{enumerate}[label=$(\roman*)_n$]
\item $\mu_{n-1}(x)\underset{x\to 0}{=}O\left( \norm{f-\id}_{0,[0,x]}^{n-1-\eta}\right)$;
\item$|\f_n(x)|\underset{{x\to 0}\atop{x\notin\Fix(f)}}{=}O\left( |\xi(x)|\times\norm{f-\id}_{0,[0,x]}^{n-\eta}\right)$;
\item $\Phi_n= \sum_{i= 0}^{+\infty} \f_n\circ f^{\mp i} \quad(+\,c_1\text{ if $n=1$})\;$ on $[0,1]\setminus \Fix(f)$;
\item $|\Phi_n(x)|\underset{{x\to 0}\atop{x\notin\Fix(f)}}{=}O\left( \norm{f-\id}_{0,[0,x]}^{n-\eta}\right)$.
\end{enumerate}
\end{lemme}

\subsubsection{Proof of Lemma \ref{l:alg}}\label{ss:alg}

\noindent\emph{Base case}. For $n=1$, the statement follows directly from the definitions of $\mu_1$, $\Phi_1$ and~$\varphi_1$:
$$ \mu_1= D\xi = \Phi_1+0,$$
$$\f_1 =- Lf^{\mp1}\times \xi= - D^0Lf^{\mp1}\times \xi^1\times  1,$$
and $(*_1)$ is satisfied for $\alpha_1=0$ and $\beta_{1,0}=1$.\medskip

\emph{Inductive step.} Assume the statement is true for some $n\in\N^*$ (let us stress that with our convention, what follows works for $n=1$ as well). Then 
\begin{align*}\mu_{n+1}&=L_\xi\mu_n-(n-1)\mu_1\mu_{n}\quad\text{by \eqref{e:lie}}\\
&=L_\xi(\Phi_n-P_n(\mu_1,...,\mu_{n-1}))-(n-1)\mu_1\mu_{n}\\
&=\Phi_{n+1}-\sum_{i=1}^{n-1}\frac{\partial P_n}{\partial x_i}(\mu_1,...,\mu_{n-1})L_\xi\mu_{i}-(n-1)\mu_1\mu_{n}
\\
&=\Phi_{n+1}-\sum_{i=1}^{n-1}\frac{\partial P_n}{\partial x_i}(\mu_1,...,\mu_{n-1})(\mu_{i+1}+(i-1)\mu_1 \mu_{i})-n\mu_1\mu_{n}\\
&=\Phi_{n+1}-P_{n+1}(\mu_1,...,\mu_n)
\end{align*}
with
$$P_{n+1}(X_1,...,X_n)=(n-1)X_1X_{n}+\sum_{i=1}^{n-1}\frac{\partial P_n}{\partial x_i}(X_1,...,X_{n-1})(X_{i+1}+(i-1)X_1 X_i).$$
In particular,
$$P_{n+1}(X,...,X^{n}) = (n-1)X^{n+1}+\sum_{i=1}^{n-1}iX^{i+1}\frac{\partial P_n}{\partial x_i}(X,...,X^{n-1}).$$
Now by the induction hypothesis, 
$$P_n(X,...,X^{n-1}) = \alpha_n X^{n},$$ 
which, after differentiation, gives
$$\sum_{i=1}^{n-1}iX^{i-1}\frac{\partial P_n}{\partial x_i}(X,...,X^{n-1}) = n\alpha_n X^{n-1}.$$
So
\begin{align*}P_{n+1}(X,...,X^{n}) =  (n-1)X^{n+1} +n\alpha_n X^{n+1}= \alpha_{n+1}X^{n+1}
\end{align*}
with $\alpha_{n+1}=n-1+n\alpha_n$.
\medskip

Obtaining the formula for $\f_{n+1}$ is not harder, but more tedious, so we strongly advise the reader against thoroughly reading what follows (which, again, works for $n=1$ as well):

\begin{align*}
\f_{n+1}=L_\xi\f_n &= - \sum_{q=0}^{n-1} L_\xi\left(D^qLf^{\mp1}\times \xi^{q+1}\times Q_{n,q}(\mu_1,...,\mu_{n-1})\right)\quad\text{by the induction hyp.}\\
&= - \sum_{q=0}^{n-1} \Biggl(L_\xi (D^{q}Lf^{\mp1})\cdot \xi^{q+1}\cdot Q_{n,q}(\mu_1,...,\mu_{n-1})\\
&\hspace{2cm}+ D^qLf^{\mp1}\cdot L_\xi\xi^{q+1}\cdot Q_{n,q}(\mu_1,...,\mu_{n-1})\\
&\hspace{3cm}\left.+ D^qLf^{\mp1}\cdot \xi^{q+1}\cdot\sum_{i=1}^{n-1}\frac{\partial Q_{n,q}}{\partial x_i}(\mu_1,...,\mu_{n-1})L_\xi\mu_i\right)\\
&= - \sum_{q=0}^{n-1} \Biggl(D^{q+1}Lf^{\mp1}\cdot \xi^{q+2}\cdot Q_{n,q}(\mu_1,...,\mu_{n-1})\\
&\hspace{2cm}+ D^qLf^{\mp1}\cdot(q+1)\mu_1 \cdot\xi^{q+1}\cdot Q_{n,q}(\mu_1,...,\mu_{n-1})\\
&\hspace{2.5cm}\left.+ D^qLf^{\mp1}\cdot \xi^{q+1}\cdot\sum_{i=1}^{n-1}\frac{\partial Q_{n,q}}{\partial x_i}(\mu_1,...,\mu_{n-1})((i-1)\mu_1\mu_i+\mu_{i+1})\right)\\
&= - \sum_{q=0}^{n} D^{q}Lf^{\mp1}\cdot \xi^{q+1}\cdot Q_{n+1,q}(\mu_1,...,\mu_{n})
\end{align*}
with, for $1\le q\le n-1$,
\begin{align*}Q_{n+1,q}(X_1,...,X_{n})=
Q_{n,q-1}&(X_1,...,X_{n-1})+(q+1)X_1Q_{n,q}(X_1,...,X_{n-1})\\
&+\sum_{i=1}^{n-1}\bigl((i-1)X_1X_i+X_{i+1}\bigr)\frac{\partial Q_{n,q}}{\partial x_i}(X_1,...,X_{n-1}),
\end{align*}
for $q=0$, 
$$Q_{n+1,0}(X_1,...,X_{n})=X_1Q_{n,0}(X_1,...,X_{n-1})+\sum_{i=1}^{n-1}\bigl((i-1)X_1X_i+X_{i+1}\bigr)\frac{\partial Q_{n,0}}{\partial x_i}(X_1,...,X_{n-1})$$
and for $q=n$,
$$Q_{n+1,n}(X_1,...,X_{n})=Q_{n,n-1}(X_1,...,X_{n-1}),$$
(which shows, in particular, that for all $n\in\N^*$, $Q_{n,n-1}$ is constant equal to $1$).\medskip
%
\medskip

Now for $1\le q\le n-1$,
\begin{align*}Q_{n+1,q}(X,...,X^{n})&=
Q_{n,q-1}(X,...,X^{n-1})+(q+1)XQ_{n,q}(X,...,X^{n-1})
\\
&\hspace{6cm}+\sum_{i=1}^{n-1}iX^{i+1}\frac{\partial Q_{n,q}}{\partial x_i}(X,...,X^{n-1})\\
&=\beta_{n,q-1}X^{n-q} + (q+1) \beta_{n,q}X^{n-q}+ X^2 (n-q-1)\beta_{n,q}X^{n-q-2},
\end{align*}
by the induction hypothesis, the equality 
$$\sum_{i=1}^{n-1}iX^{i-1}\frac{\partial Q_{n,q}}{\partial x_i}(X,...,X^{n-1}) = (n-q-1)\beta_{n,q}X^{n-q-2}$$
being once again obtained by differentiating the induction hypothesis on $Q_{n,q}$ :
$$Q_{n,q}(X,...,X^{n-1})=\beta_{n,q}X^{n-q-1}.$$
So
$$Q_{n+1,q}(X,...,X^{n})=\beta_{n+1,q} X^{n-q}\quad\text{with}\quad\beta_{n+1,q} = \beta_{n,q-1} + n \beta_{n,q} .$$
For $q=0$ and $q=n$, a similar argument gives
$$Q_{n+1,0}(X,...,X^{n})=\beta_{n+1,0} X^{n}\quad\text{with}\quad\beta_{n+1,0} = n \beta_{n,0} $$
and
$$Q_{n+1,n}(X,...,X^{n})=\beta_{n+1,n} X^{0}\quad\text{with}\quad\beta_{n+1,n} = \beta_{n,n-1} =1 .$$
%

\subsubsection{Proof of \ref{l:estb} by induction : base case, $n=1$}\label{ss:base}
Let $\eta>0$. \medskip

\noindent$(i)_1$ is straightforward since $\mu_0=0$.\bigskip

\noindent$(ii)_1$ For all $x\in [0,1]\setminus \Fix(f)$, 
$$|\f_1(x) |=| (Lf^{\mp1}\times\xi)(x) |\le |\xi(x)|\times\norm{Lf^{\mp1}}_{0,[0,x]}\underset{x\to 0}{=}O\left(|\xi(x)|\times\norm{f-\id}^{1-\eta}_{0,[0,x]}\right)$$
according to Corollary \ref{c:had}.\bigskip

\noindent$(iii)_1$ This is exactly formula \eqref{e:Dxi}.\bigskip

\noindent$(iv)_1$ According to $(ii)_1$ above and Lemma \ref{t:szek}, there exist $C>0$ and $x_1\in (0,1]$ such that, for all $x\le x_1$, $x\notin\Fix(f)$,
\begin{equation}\label{e:iio}
|\f_1(x) |\le C |\xi(x)|\times\norm{f-\id}^{1-\eta}_{0,[0,x]}\quad\text{and}\quad \left|\frac{\xi(x)}{f^{\mp1}(x)-x}\right|\le 2.
\end{equation}
For all such $x$, for all $i\in\N$, $f^{\mp i}(x)\le x\le x_1$ so 
\begin{align*}\label{e:iioi}
|\f_1(f^{\mp i}(x)) |&\le C |\xi(f^{\mp i}(x))|\times\norm{f-\id}^{1-\eta}_{0,[0,f^{\mp i}(x)]}\\
&\le 2C\; |(f^{\mp1}-\id)\circ f^{\mp i}(x)|\times\norm{f-\id}^{1-\eta}_{0,[0,x]}\\
&\le 2C \;\left(f^{\mp i}(x)-f^{\mp (i+1)}(x)\right)\times\norm{f-\id}^{1-\eta}_{0,[0,x]}
\end{align*}
As a consequence,
\begin{equation*}
\left|\sum_{i=0}^{+\infty}
    \f_1\circ f^{\mp i}(x)\right|\le 2C  \norm{f-\id}^{1-\eta}_{0,[0,x]}\underbrace{\sum_{i=0}^{+\infty}(f^{\mp i}(x)-f^{\mp (i+1)}(x))}_{\le 1}.
\end{equation*}
Furthermore, by definition of $c_1$,
$$|c_1(x)| \le \norm{\log Df}_{0,[0,x]}\underset{x\to 0}{=}O\left(\norm{f-\id}_{0,[0,x]}^{1-\eta}\right)\quad\text{according to Corollary \ref{c:had}}.$$
So in the end, given $(iii)_1$,
$$|\Phi_1(x)|\le |c_1(x)|+\left|\sum_{i=0}^{+\infty}
    \f_1\circ f^{\mp i}(x)\right|\underset{x\to 0}{=}O\left( \norm{f-\id}_{0,[0,x]}^{1-\eta}\right),\quad\text{i.e. $(iv)_1$}.$$

\subsubsection{Proof of Lemma \ref{l:estb}: inductive step}\label{ss:step}

Let $n\ge1$, and assume $(i)_q$ to $(iv)_q$ are satisfied for all $q\le n$ (for all $\eta>0$). Let $\delta>0$.\bigskip\medskip

\noindent$(i)_{n+1}$ : According to $(iv)_n$ and $(i)_{2\,\text{to}\,n}$ 
, there exist $C>0$ and $x_1\in(0,1]$ such that, for all $x\le x_1$, $x\notin\Fix(f)$,
\begin{equation}\label{e:phin}
|\Phi_n(x)|\le C\norm{f-\id}^{n-\delta}_{0,[0,x]}\quad\text{and}\quad |\mu_{k}(x)|\le C^{k} \norm{f-\id}^{k-\frac{k\delta}{n}}_{0,[0,x]}\quad \forall k\in[\![1,n-1]\!].
\end{equation}
For all such $x$,
\begin{align*}
|\mu_{n}(x)| &= |\Phi_n(x) - P_n(\mu_1(x),...,\mu_{n-1}(x))|\quad\text{according to Lemma \ref{l:alg},}\\
& \le |\Phi_n(x)| + P_n(|\mu_1(x)|,...,|\mu_{n-1}(x)|)\quad\text{since the coefficients of $P_n$ are positive,}\\
&\le C\norm{f-\id}^{n-\delta}_{0,[0,x]}+ P_n\left(C\norm{f-\id}^{1-\frac{\delta}{n}}_{0,[0,x]},...,\left(C\norm{f-\id}^{1-\frac{\delta}{n}}_{0,[0,x]}\right)^{n-1}\right)\;\text{by \eqref{e:phin}}\\
&\le C\norm{f-\id}^{n-\delta}_{0,[0,x]} + \alpha_{n}\left(C\norm{f-\id}^{1-\frac{\delta}{n}}_{0,[0,x]}\right)^{n}\;\text{according to Lemma \ref{l:alg}}\\
&\le (C+\alpha_nC^{n}) \norm{f-\id}^{n-\delta}_{0,[0,x]},
\end{align*}
and this extends to $x\in\Fix(f)\cap(0,x_1]$ by continuity, which proves $(i)_{n+1}$.
\bigskip

\noindent$(ii)_{n+1}$ : According to $(i)_{2\,\text{to}\,n+1}$, there exist $C>0$ and $x_1\in(0,1]$ such that, for all $x\in(0,x_1]$, 
\begin{equation}\label{e:mum}
|\mu_k(x)|\le C^k \norm{f-\id}^{k-\frac{k\delta}{2n}}_{0,[0,x]}\quad \forall k\in[\![1,n]\!].
\end{equation}
In particular, for all such $x$, for all $q\in[\![0,n]\!]$,
\begin{align*}
\left| Q_{n+1,q}(\mu_1,...,\mu_{n})(x)\right|&\le Q_{n+1,q}\left( |\mu_1(x)|,...,|\mu_{n}(x)|\right)\,\text{since the coef. of $Q_{n+1,q}$ are $\ge0$,}\\
&\le Q_{n+1,q}\left(C\norm{f-\id}^{1-\frac{\delta}{2n}}_{0,[0,x]},...,\left(C\norm{f-\id}^{1-\frac{\delta}{2n}}_{0,[0,x]}\right)^{n}\right)\,\text{by \eqref{e:mum}}\\
&=\beta_{n+1,q}\left(C\norm{f-\id}^{1-\frac{\delta}{2n}}_{0,[0,x]}\right)^{n-q}\;\text{by Lemma \ref{l:alg}}\\
&\underset{x\to 0}{=}O\left( \norm{f-\id}^{n-q-\frac\delta2}_{0,[0,x]}\right).
\end{align*}
Now according to Lemma \ref{l:alg}, for all $x\in[0,1]\setminus\Fix(f)$,
\begin{align*}\label{e:fin}
|\f_{n+1}(x)|&=\left|\sum_{q=0}^{n} D^qLf^{\mp1}(x)\times \xi^{q+1}(x)\times Q_{n+1,q}(\mu_1,...,\mu_{n})(x)\right|\notag\\
&\le |\xi(x)|\times \sum_{q=0}^{n}\underbrace{\left|D^qLf^{\mp1}(x) \right|}_{{\underset{0}{=}O\left(\norm{f-\id}^{1-\frac\delta2}_{0,[0,x]}\right)}\atop {\text{according to\ref{c:had}}}}\times\underbrace{| \xi^{q}(x)|}_{{\underset{ 0}{=}O\left(\norm{f-\id}^{q}_{0,[0,x]}\right)}\atop{\text{according to \ref{t:szek}}}}\times \underbrace{\left| Q_{n+1,q}(\mu_1,...,\mu_{n})(x)\right|.}_{{\underset{0}{=}O\left(\norm{f-\id}^{n-q-\frac\delta2}_{0,[0,x]}\right)}\atop{\text{as we just saw}}}
\end{align*}
So $\quad|\f_{n+1}(x)|\underset{{x\to 0}\atop{x\notin\Fix(f)}}{=}O\left( |\xi(x)|\times\norm{f-\id}^{n+1-\delta}_{0,[0,x]}\right)$,$\quad\text{which proves $(ii)_{n+1}$}$.\medskip

Note that, more generally:
\begin{claim}\label{c:fibound}
$\frac{\varphi_{n+1}}{\xi}$ is bounded on $[0,1]\setminus \Fix(f)$. 
\end{claim}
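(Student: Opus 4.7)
The plan is to read the desired bound directly off the formula for $\varphi_{n+1}$ furnished by Lemma \ref{l:alg}. Dividing it by $\xi$, which is nonvanishing on $[0,1]\setminus\Fix(f)$, we obtain
\begin{equation*}
\frac{\varphi_{n+1}}{\xi} = -\sum_{q=0}^{n} D^qLf^{\mp1}\cdot \xi^{q}\cdot Q_{n+1,q}(\mu_1,\dots,\mu_{n}),
\end{equation*}
so it suffices to check that each of the three types of factors appearing on the right is uniformly bounded on $[0,1]\setminus\Fix(f)$.

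The first two factors are easy. On each connected component of $[0,1]\setminus\Fix(f)$, $f^{\mp1}$ coincides with either $f$ or $f^{-1}$, so $|D^qLf^{\mp1}|$ is globally dominated by $\max(\|D^qLf\|_0,\|D^qLf^{-1}\|_0)<+\infty$. As for $\xi^q$, since $\xi$ is $\CC^1$ on the compact $[0,1]$ it is continuous, hence bounded, and so is any power of it.

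The only substantive step is the boundedness of $\mu_k = \xi^{k-1}D^k\xi$ for $k\in\{1,\dots,n\}$ on $[0,1]\setminus\Fix(f)$. Since $\partial\subset\Fix(f)$, we may as well bound $\mu_k$ on the larger open set $[0,1]\setminus\partial$. On this set $\xi$ is actually $\CC^\infty$: on each open connected component of $[0,1]\setminus\Fix(f)$ this is by construction, and across any interior (hence non-ITI) fixed point of $f$ the two one-sided Szekeres vector fields match smoothly by Takens's theorem \ref{t:Takens}. Hence $\mu_k$ is continuous on $[0,1]\setminus\partial$. Moreover, the estimate $(i)_{k+1}$ of Lemma \ref{l:estb} --- which has been proved by the induction for every $k\le n$, with a symmetric statement at $1$ if $1\in\partial$ --- gives $\mu_k(x)\to 0$ as $x$ tends to any point of $\partial$. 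So $\mu_k$ extends continuously by $0$ to all of $[0,1]$ and is therefore bounded, which concludes the argument.

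The main (fairly minor) obstacle is keeping track of the bookkeeping of the induction so as to be sure that all the needed estimates $(i)_{k+1}$ with $k\le n$ are already available at this stage; fortunately the formula from Lemma \ref{l:alg} for $\varphi_{n+1}$ only involves $\mu_1,\dots,\mu_n$, so no circularity arises.
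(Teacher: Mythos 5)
Your proposal is correct and follows essentially the same route as the paper: read the bound off the formula of Lemma \ref{l:alg} for $\varphi_{n+1}/\xi$, bound $D^qLf^{\mp1}$ by $\max(\norm{Lf}_{n},\norm{Lf^{-1}}_{n})$ and $\xi^q$ by compactness, and obtain the boundedness of $\mu_1,\dots,\mu_n$ from their continuity on $[0,1]\setminus\partial$ (smoothness of $\xi$ there) together with the already-established estimates $(i)_2,\dots,(i)_{n+1}$ at the points of $\partial$. The bookkeeping you worry about is exactly as you describe, and your sign in the displayed formula is in fact the consistent one.
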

\begin{proof}
According to Lemma \ref{l:alg}, on $[0,1]\setminus \Fix(f)$,
\begin{equation*}\label{e:fibound}
\frac{\varphi_{n+1}}{\xi}=\sum_{q=0}^{n} D^qLf^{\mp1}\times \xi^{q}\times Q_{n+1,q}(\mu_1,...,\mu_{n}).
\end{equation*}
For all $q\in[\![0,n]\!]$, $D^qLf^{\mp1}$ is bounded on $[0,1]\setminus\Fix(f)$ by $\max\left(\norm{Lf}_{n,[0,1]},\norm{Lf^{- 1}}_{n,[0,1]}\right)$. Furthermore, $\xi$ is continuous and thus bounded on $[0,1]$. Finally, $\mu_1=D\xi, ..., \mu_n=$ $\xi^{n-1}D^n\xi$ are continuous on $(0,1)$ by \ref{t:kst} and extend continuously to $[0,1)$ by estimates $(i)_{2\,\text{to}\,n+1}$. Naturally, if $f$ is ITI at $1$, similar estimates hold near $1$. And if $f$ is not ITI at $1$, $\xi$ is $\Cinf$ on a neighbourhood of $1$, so $\mu_1$,...,$\mu_n$ are too. So in the end, $\mu_1$,...,$\mu_n$ extend continuously to $[0,1]$, so they are bounded on $[0,1]$, and so is $Q_{n+1,q}(\mu_1,...,\mu_{n})$. 
\end{proof}
\bigskip

\noindent$(iii)_{n+1}$ : Note that
$$L_\xi(\varphi_n\circ f^{\mp i})=D\varphi_n\circ f^{\mp i}\times L_\xi f^{\mp i}=D\varphi_n\circ f^{\mp i}\times\xi \circ f^{\mp i}=\varphi_{n+1}\circ f^{\mp i},$$
So what we want to prove is that the symbol interversion $(*)$ below is licit:
$$\Phi_{n+1}=L_\xi\Phi_n\underset{(iii)_{n}}{=}L_\xi\left(\sum_{i= 0}^{+\infty} \f_n\circ f^{\mp i} \right)\underset{(*)}{=}\sum_{i= 0}^{+\infty} L_\xi(\f_n\circ f^{\mp i})=\sum_{i= 0}^{+\infty}\f_{n+1}\circ f^{\mp i}.$$
To that end, it is sufficient to prove that the last series converges uniformly on every segment contained in $[0,1]\setminus\Fix(f)$. Let $J$ be such a segment, and $(a,b)$ the connected component of $[0,1]\setminus\Fix(f)$ containing it. Let $C$ and $C'$ denote $\norm{\frac{\xi}{f-\id}}_{0,[0,1]\setminus \Fix(f)}$ (cf.~\ref{t:szek}) and $\norm{\frac{\varphi_{n+1}}{\xi}}_{0,[0,1]\setminus \Fix(f)}$ (cf. Claim \ref{c:fibound}) respectively. 
For all $x\in J$, for all $i\in\N$, $f^{\mp }i(x)\le x$, so 
\begin{equation}\label{e:iini}
\left|\f_{n+1}(f^{\mp i}(x)) \right|\le C' |\xi(f^{\mp i}(x))|\le C'C\left(f^{\mp i}(x)-f^{\mp(i+1)}(x)\right).
\end{equation}
So since $\sum_{i\ge 0}(f^{\mp i}-f^{\mp(i+1)})$ converges uniformly on $J$ (towards $\id -a$), so does $\sum_{i\ge 0} \f_{n+1}\circ f^{\mp i}$ and this concludes the proof of $(iii)_{n+1}$.\bigskip

\noindent$(iv)_{n+1}$: According to $(ii)_{n+1}$ and Lemma \ref{t:szek}, there exist $C>0$ and $x_1\in (0,1]$ such that, for all $x\le x_1$, $x\notin\Fix(f)$,
\begin{equation}\label{e:iin}
|\f_{n+1}(x) |\le C (x-f^{\mp 1}(x))\times\norm{f-\id}^{n+1-\delta}_{0,[0,x]}.
\end{equation}
So for all such $x$,
\begin{align*}
|\Phi_{n+1}(x)|&\le  \sum_{i\ge 0} |\f_{n+1}\circ f^{\mp i}(x)|\quad\text{by $(iii)_{n+1}$ above,}\\
&\le C'\norm{f-\id}^{n+1-\eta}_{0,[0,x]}\underbrace{\sum_{i\ge 0}(f^{\mp i}(x)-f^{\mp(i+1)}(x))}_{\le 1}\quad\text{by \eqref{e:iin},}
\end{align*}
which concludes the proof of $(iv)_{n+1}$ and thus the proof of Lemma~\ref{p:estb}.


\section*{Acknowledgements}
We are grateful to S. Crovisier, T. Tsuboi and J.-C. Yoccoz, among others, for sharing their ideas and knowledge around the subject of this article, and especially to S. Gouezel for making us realize at some point that we actually \emph{had} the solution to our problem. The second author would also like to thank E. Giroux for his amazing guidance all along, through countless discussions, from the first glimpse of understanding to the very formulation of some results.
We are also grateful to the Institut de Math\'ematiques de Bourgogne for its hospitality, and to the CIRM for the stimulating setting in which a major part of this work was accomplished.

\vskip 5mm
\begin{tabular}{ll}
Christian Bonatti & H\'el\`ene Eynard-Bontemps
\\
\footnotesize{bonatti@u-bourgogne.fr} &  \footnotesize{heynardb@math.jussieu.fr}
\\
Institut de Math\'ematiques de Bourgogne &Institut de Math\'ematiques de Jussieu 
\\
CNRS - UMR 5584&CNRS - UMR 7586
\\
Universit\'e de Bourgogne& Universit\'e Pierre et Marie Curie
\\
9 av. A. Savary & 4 place  Jussieu
\\
21000  Dijon, France& 75252 Paris cedex, France

\end{tabular}

\end{document}